\theoremstyle{plain}
\newtheorem{lemma}{Lemma}[section]
\newtheorem{theorem}[lemma]{Theorem}
\newtheorem{corollary}[lemma]{Corollary}
\newtheorem*{sclaim}{Claim}
\newtheorem*{stat}{\name}
\newcommand{\name}{testing}
\theoremstyle{definition}
\newtheorem{definition}[lemma]{Definition}
\theoremstyle{remark}
\newtheorem{remark}[lemma]{Remark}
\newtheorem*{remark*}{Remark}
\newenvironment{all}[1]{\renewcommand{\name}{#1}\begin{stat}}
                         {\end{stat}}
\newcommand{\qedc}{{\qed}~{\rm Claim~{\theclaim}.}}
\newcommand{\qedsc}{{\qed}~{\rm Claim.}}
\newenvironment{scproof}
{\begin{proof}[Proof of Claim.]}
{\qedsc\renewcommand{\qed}{}\end{proof}}
\numberwithin{equation}{section}
\newcommand{\pup}[1]{\textup{(}{#1}\textup{)}}
\newcommand{\set}[1]{\{#1\}}
\newcommand{\setm}[2]{\set{#1\mid#2}}
\newcommand{\Set}[1]{\left\{#1\right\}}
\newcommand{\Setm}[2]{\Set{#1\mid#2}}
\newcommand{\famm}[2]{(#1\mid#2)}
\newcommand{\Pow}{\mathfrak{P}}
\newcommand{\dnw}{\mathbin{\downarrow}}
\newcommand{\upw}{\mathbin{\uparrow}}
\newcommand{\Qone}{\mathbf{(Q1)}}
\newcommand{\Qtwo}{\mathbf{(Q2)}}
\newcommand{\cC}{\mathcal{C}}
\newcommand{\cD}{\mathcal{D}}
\newcommand{\cG}{\mathcal{G}}
\newcommand{\cI}{\mathcal{I}}
\newcommand{\cK}{\mathcal{K}}
\newcommand{\cL}{\mathcal{L}}
\newcommand{\cM}{\mathcal{M}}
\newcommand{\cN}{\mathcal{N}}
\newcommand{\cV}{\mathcal{V}}
\newcommand{\cW}{\mathcal{W}}
\newcommand{\zero}{\mathbf{0}}
\newcommand{\one}{\mathbf{1}}
\newcommand{\two}{\mathbf{2}}
\newcommand{\ba}{\boldsymbol{a}}
\newcommand{\bb}{\boldsymbol{b}}
\newcommand{\by}{\boldsymbol{y}}
\newcommand{\bff}{\boldsymbol{f}}
\newcommand{\bgg}{\boldsymbol{g}}
\newcommand{\bA}{\boldsymbol{A}}
\newcommand{\bB}{\boldsymbol{B}}
\newcommand{\bU}{\boldsymbol{U}}
\newcommand{\CG}{\boldsymbol{C}}
\newcommand{\GA}{\boldsymbol{G}}
\newcommand{\id}{\mathrm{id}}
\newcommand{\jz}{$(\vee,0)$}
\newcommand{\jzs}{\jz-semi\-lat\-tice}
\newcommand{\jzh}{\jz-ho\-mo\-mor\-phism}
\newcommand{\res}{\mathbin{\restriction}}
\DeclareMathOperator{\card}{card}
\DeclareMathOperator{\crita}{crit}
\newcommand{\crit}[2]{\crita({{#1};{#2}})}
\DeclareMathOperator{\Def}{Def}
\DeclareMathOperator{\Sub}{Sub}
\DeclareMathOperator{\At}{At}
\DeclareMathOperator{\rng}{rng}
\DeclareMathOperator{\Max}{Max}
\DeclareMathOperator{\Con}{Con}
\newcommand{\Conc}{\Con_{\mathrm{c}}}
\DeclareMathOperator{\lh}{lh}
\DeclareMathOperator{\Var}{{\bf{Var}}}
\newcommand{\IC}{\cI}
\newcommand{\JC}{\cI_1}
\newcommand{\KC}{\cI_2}
\newcommand{\tosurj}{\mathbin{\twoheadrightarrow}}
\newcommand{\xF}{\mathbf{F}}
\newcommand{\kposet}[4]{{{#1}} \mathbin{\boxtimes}_{{#3}} {{#4}} }
\newcommand{\dual}[1]{{#1}^{\mathrm{d}}}
\begin{document}

\title[Critical points]{The possible values of critical points between varieties of lattices}

\author[P.~Gillibert]{Pierre Gillibert}

\address{LMNO, CNRS UMR 6139\\
D\'epartement de Math\'ematiques, BP 5186\\
Universit\'e de Caen, Campus 2\\
14032 Caen cedex\\
France}
\email{pgillibert@yahoo.fr}
\urladdr{http://www.math.unicaen.fr/\~{}giliberp/}

\subjclass[2000]{Primary 06B20, 08A30; Secondary 08A55, 08B25, 08B26}
% 06B20 : Varieties of lattices
% 08A30 : Subalgebras, congruence relations
% 08A55 : Partial algebras
% 08B25 : Products, amalgamated products, and other kinds of limits and colimits [See also 18A30]
% 08B26 : Subdirect products and subdirect irreducibility

\keywords{Lattice, variety, compact, congruence, congruence class, critical point, Condensate Lifting Lemma, partial algebra, gamp, chain diagram}
\thanks{This work was partially supported by the institutional grant MSM 0021620839}

\begin{abstract}
We denote by $\Conc L$ the \jzs\ of all finitely generated congruences of a lattice~$L$. For varieties (i.e., equational classes) $\cV$ and~$\cW$ of lattices such that~$\cV$ is contained neither in~$\cW$ nor in its dual, and such that every simple member of~$\cW$ contains a prime interval, we prove that there exists a bounded lattice $A\in\cV$ with at most~$\aleph_2$ elements such that $\Conc A$ is not isomorphic to $\Conc B$ for any $B\in\cW$. The bound~$\aleph_2$ is optimal. As a corollary of our results, there are continuum many congruence classes of locally finite varieties of (bounded) modular lattices.
\end{abstract}
\maketitle

\section{Introduction}\label{S:Intro}
\subsection{Background}
An \emph{algebra} (in the sense of universal algebra) is a nonempty set~$A$ endowed with a collection of maps (``operations'') from finite powers of~$A$ to~$A$. One of the most fundamental invariants associated with an algebra~$A$ is the lattice~$\Con A$ of all \emph{congruences} of~$A$, that is, the equivalence relations on~$A$ compatible with all the operations of~$A$. It is often more convenient to work with the \jzs~$\Conc A$ of all \emph{finitely generated congruences} of~$A$ (for precise definitions we refer the reader to Section~\ref{S:Basic}). We set
 \[
 \Conc\cV=\setm{S}{(\exists A\in\cV)(S\cong\Conc A)},\quad\text{for every class }\cV
 \text{ of algebras},
 \]
and we call $\Conc\cV$ the \emph{compact congruence class of~$\cV$}. This object has been especially studied for~$\cV$ a \emph{variety} (or \emph{equational class}) \emph{of algebras}, that is, the class of all algebras that satisfy a given set of \emph{identities} (in a given similarity type).

For a variety~$\cV$, the following fundamental questions arise:

\begin{itemize}
\item[$\Qone$] Is~$\Conc\cV$ determined by a reasonably small fragment of itself---for example, is it determined by its \emph{finite} members?

\item[$\Qtwo$] In what extent does~$\Conc\cV$ characterize~$\cV$?
\end{itemize}

While both questions remain largely mysterious in full generality, partial answers are known in a number of situations. Here is an example that illustrates the difficulties underlying Question~$\Qone$ above. We are working with \emph{varieties of lattices}. Denote by~$\cD$ the variety of all distributive lattices and by~$\cM_3$ the variety of lattices generated by the five-element modular nondistributive lattice~$M_3$ (cf. Figure~\ref{F:treillis_M3_N5}, page~\pageref{F:treillis_M3_N5}). The finite members of both~$\Conc\cD$ and~$\Conc\cM_3$ are exactly the finite Boolean lattices. It is harder to prove that there exists a \emph{countable} lattice~$K\in\cM_3$ such that $\Conc K\not\cong\Conc D$ for any $D\in\cD$. This can be obtained easily from the results of Plo\v s\v cica in~\cite{Ploscica04}, but a direct construction is also possible, as follows: denote by~$S$ a copy of the two-atom Boolean lattice in~$M_3$; let~$K$ be the lattice of all eventually constant sequences of elements of~$M_3$ such that the limit belongs to~$S$. (This construction is a precursor of the \emph{condensates} introduced in~\cite{G1}.)

This suggests a way to \emph{measure} the ``containment defect'' of~$\Conc\cV$ into~$\Conc\cW$, for varieties (not necessarily in the same similarity type)~$\cV$ and~$\cW$. The \emph{critical point} between~$\cV$ and~$\cW$ is defined as
 \begin{align*}
 \crit{\cV}{\cW}=\begin{cases}
 \min\setm{\card S}{S\in(\Conc\cV)-(\Conc\cW)},&
 \text{if }\Conc\cV\not\subseteq\Conc\cW,\\
 \infty,&\text{if }\Conc\cV\subseteq\Conc\cW.
 \end{cases}
 \end{align*}
The example above shows that
 \[
 \crit{\cM_3}{\cD}=\aleph_0.
 \]
Using varieties generated by finite non-modular lattices it is easy to construct \emph{finite} critical points.
By using techniques from \emph{infinite combinatorics}, introduced in~\cite{NonMeas}, Plo\v s\v cica found in \cite{Ploscica00, Ploscica03} varieties of bounded lattices with critical point~$\aleph_2$; the bounds of those examples were subsequently removed, by the author of the present paper, in~\cite{G2}. For example, if we denote by~$\cM_n$ the variety generated by the lattice of length~$2$ with~$n$ atoms, then
 \[
 \crit{\cM_m}{\cM_n}=\aleph_2\quad\text{if }m>n\ge 3.
 \]
In~\cite{G1} we find a pair of finitely generated modular lattice varieties with critical point~$\aleph_1$, thus answering a 2002 question of T\r{u}ma and Wehrung from~\cite{CLPSurv}.

In ``crossover'' contexts (varieties on different similarity types), the situation between lattices and congruence-permutable varieties (say, \emph{groups} or \emph{modules}) is quite instructive. The five-element lattice~$M_3$ is isomorphic to the congruence lattice of a group (take the Klein group) but not to the congruence lattice of any lattice (for it fails distributivity), and it is easily seen to be the smallest such example. Thus, if we denote by~$\cL$ the variety of all lattices and by~$\cG$ the variety of all groups, then
 \[
 \crit{\cG}{\cL}=5.
 \]
Much harder techniques, also originating from~\cite{NonMeas}, are applied in~\cite{RTW}, yielding the following result:
 \[
 \crit{\cL}{\cG}=\aleph_2.
 \]
A general ``crossover result'', proved in~\cite{G1}, is the following \emph{Dichotomy Theorem}:

\begin{all}{Dichotomy Theorem for varieties of algebras}
Let~$\cV$ and~$\cW$ be varieties of algebras, with~$\cV$ locally finite and~$\cW$ finitely generated congruence-distributive \pup{\emph{for example, any variety of lattices}}. If $\Conc\cV\not\subseteq\Conc\cW$, then $\crit{\cV}{\cW}<\aleph_\omega$.
\end{all}

In \cite{GiWe1}, we use techniques of \emph{category theory} to extend this Dichotomy Theorem to much wider contexts, in particular to \emph{congruence-modular} (instead of congruence-distributive) varieties and even to relative congruence classes of \emph{quasivarieties}. Examples of congruence-modular varieties are varieties of \emph{groups} (or even \emph{loops}) or \emph{modules}.

Now comes another mystery. We do not know any critical point, between varieties of algebras with (at most) countable similarity types, equal to~$\aleph_3$, or~$\aleph_4$, and so on: all known critical points are either below~$\aleph_2$ or equal to~$\infty$. How general is that phenomenon?

One half of our main result, Theorem~\ref{T:majorcritpoint}, is the following:

\begin{all}{Dichotomy Theorem for varieties of lattices}
Let~$\cV$ and~$\cW$ be lattice varieties such that every simple member of~$\cW$ contains a prime interval. If $\Conc\cV\not\subseteq\Conc\cW$, then $\crit{\cV}{\cW}\leq\aleph_2$.
\end{all}

In particular, this result gives a solution to Question~$\Qone$ for varieties of lattices where every simple member contains a prime interval. It turns out that the other half of Theorem~\ref{T:majorcritpoint} also solves Question~$\Qtwo$ for those varieties, by proving that $\Conc\cV\subseteq\Conc\cW$ occurs only in the trivial cases, namely~$\cV$ is contained in either~$\cW$ or its dual.

\subsection{Contents of the paper}
Our main idea is the following. It is well known that the assignment $A\mapsto\Conc A$ can be extended, in a standard way, to a \emph{functor}, from all algebras of a given similarity type with their homomorphisms, to the category of all \jzs s and \jzh s: for a homomorphism $f\colon A\to B$ of algebras, $\Conc f$ sends any compact congruence~$\alpha$ of~$A$ to the congruence generated by all pairs $(f(x),f(y))$ where $(x,y)\in\alpha$. Given a finite bounded lattice~$L$, we construct a diagram of \jzs s liftable in a variety of lattices (or bounded lattices)~$\cV$ if and only if either~$L$ or its dual belongs to $\cV$. Then, using a \emph{condensate} \cite[Section 3-1]{GiWe1}, we construct a \jzs~liftable in~$\cV$ if and only if either $L\in\cV$ or $\dual{L}\in\cV$.

The main idea is to start with the \emph{chain diagram~$\vec A$ of~$L$} (cf. Definition~\ref{D:chaindia}). A precursor of that diagram can be found in \cite[Section 4]{G2}. The chain diagram can be described in the following way. The arrows are the inclusion maps. At the bottom of the diagram we put $\set{0,1}$, the sublattice of~$L$ consisting of the bounds of~$L$. On the next level we put all three-element and four-element chains of~$L$ with extremities~$0$ and~$1$, over two chains we put the sublattice of~$L$ generated by those two chains, finally at the top we put the lattice~$L$ itself.

The diagram $\Conc\circ\vec A$ is liftable in any variety that contains either~$L$ or its dual; we do not know any counterexample of the converse yet. However, given a lifting~$\vec B$ of $\Conc\circ\vec A$, with all morphisms of~$\vec B$ being inclusion maps, if all lattices of~$\vec B$ that correspond to chains in~$\vec A$ contain ``congruence chains'' (cf. Definition~\ref{D:congruencechain}) with the same extremities, we also require those chains be ``direct'' (cf. Definition~\ref{D:congruencechain}). For example, if~$A_i=\set{0,x,1}$ is a chain, then~$B_i=\set{u,y,v}$ is also a chain (with $u<y<v$), the congruence $\Theta_{A_i}(0,x)$ corresponds to the congruence $\Theta_{B_i}(u,y)$, and the congruence $\Theta_{A_i}(x,1)$ corresponds to the congruence $\Theta_{B_i}(y,v)$. Under these assumptions we construct a sublattice of the top member of~$\vec B$ isomorphic to~$L$.

The second step of our construction is to expand the chain diagram in order to force congruence chains to be direct (cf. Lemma~\ref{L:directing}). By gluing the chain diagram of~$L$ and copies of the diagram constructed in Lemma~\ref{L:directing}, we obtain a diagram $\vec A'$ such that whenever~$\vec B$ is a lifting of $\Conc\circ\vec A'$ with enough congruence chains, then~$L$ embeds into a quotient of some lattice of~$\vec B$ or its dual.

The third step (cf. Lemma~\ref{L:complextosimplewithchain}) is to ensure the existence of enough congruence chains. We can construct a diagram $\vec A''$ such that if $\Conc\circ\vec A''$ is liftable in some variety~$\cW$, then either $L\in\cW$ or $L\in\dual{\cW}$. For this step, we need a variety~$\cW$ where every simple lattice has a prime interval (i.e., elements~$u$ and~$v$ such that $u\prec v$).

The last step is to use a \emph{condensate} construction \cite[Section 3-1]{GiWe1} on $\vec A''$ to obtain a lattice $B$ of cardinality $\aleph_2$ such that $\Conc B$ is liftable in~$\cW$ if and only if $\Conc\circ\vec A''$ has a ``partial lifting'' in~$\cW$ (cf. \cite[Theorem~9.3]{G3}). Hence $\Conc B$ is liftable in~$\cW$ if and only if either $L\in\cW$ or $L\in\dual{\cW}$.

We use \cite[Theorem~9.3]{G3} which is a generalization of \cite[Theorem~6.9]{G1}. However, the construction used for the latter would give an upper bound $\aleph_3$ for critical points (or $\aleph_2$ in case the variety~$\cW$ is finitely generated). A classical example illustrating the problem is that the three-element chain is the congruence lattice of a modular lattice, but not the congruence lattice of any finite modular lattice.
In order to work around this problem, we used an object introduced in \cite{G3}, called \emph{gamp}. The category of gamps of lattices behave similarly to a finitely generated variety of lattices.

Gamps are certain partial structures endowed with semilattice-valued distances (cf. \cite[Sections~4, 5, and 6]{G3}). A \emph{lattice partial lifting} (cf. \cite[Definition~6.14]{G3}) is a diagram of gamps of lattices that satisfies just enough properties to make the constructions in Sections~\ref{S:ChainDiag} and~\ref{S:ALargerDiag} possible. The chain diagram of a lattice and its uses can be illustrated by the following correspondences:
\begin{align*}
\text{$\bB$ is a gamp of~$\cW$} &\quad\rightleftharpoons\quad \text{$B=B^*$ is a lattice in~$\cW$},\\
\widetilde B &\quad\rightleftharpoons\quad \Conc B,\\
\delta_{\bB}(x,y) &\quad\rightleftharpoons\quad \Theta_B(x,y),\\
\text{$\bgg$ is a morphism of gamps} &\quad\rightleftharpoons\quad \text{$g$ is a homomorphism of lattices},\\
\widetilde g &\quad\rightleftharpoons\quad \Conc g,\\
\text{$\vec\bB$ is partial lifting of~$\vec A$ in~$\cW$} &\quad\rightleftharpoons\quad \text{$\vec B$ is a lifting of~$\vec A$ in~$\cW$.}
\end{align*}

The chain diagram also makes it possible to prove (cf. Theorem~\ref{T:nofunctor}) that if there is a functor $\Psi\colon\cV\to\cW$ such that $\Conc\circ\Psi$ is equivalent to $\Conc$, then either $\cV\subseteq\cW$ or $\cV\subseteq\dual{\cW}$. The functor~$\Psi$ itself does not need to be equivalent to either inclusion or dualization, however we prove in Theorem~\ref{T:nofunctor} that this holds ``up to congruence-preserving extensions''.

\section{Basic concepts}\label{S:Basic}

We denote by $\two$ the two-element lattice, or poset (i.e., partially ordered set), or \jzs, depending of the context. We denote by $\lh C=\card C - 1$ the \emph{length} of a finite chain $C$.

We denote the \emph{range} of a function $f\colon X\to Y$ by $\rng f=\setm{f(x)}{x\in X}$. We use basic set-theoretical notation, for example~$\omega$ is the first infinite ordinal, and also the set of all nonnegative integers; furthermore, $n=\set{0,1,\dots,n-1}$ for every nonnegative integer~$n$. By ``countable'' we will always mean ``at most countable''.

Let $X$, $I$ be sets, we often denote $\vec x=(x_i)_{i\in I}$ an element of $X^I$. In particular, for~$n<\omega$, we denote by~$\vec x=(x_0,\dots,x_{n-1})$ an~$n$-tuple of $X$. Let $m\le n\le\omega$. Let $\vec x$ be an $n$-tuple of~$X$, we denote by $\vec x\res m$ the $m$-tuple $(x_k)_{k<m}$.

Given an algebra~$A$, we denote by $\zero_A$ (resp., $\one_A$) the smallest (resp. largest) congruence of $A$. Given $x,y$ in~$A$, we denote by $\Theta_A(x,y)$ the smallest congruence that identifies $x$ and $y$, and we call it a \emph{principal congruence}. A congruence of~$A$ is \emph{finitely generated} if it is a finite join of principal congruences. The finitely generated congruences of~$A$ are exactly the \emph{compact} elements of the algebraic lattice~$\Con A$. The set~$\Conc A$ of all compact congruences of~$A$ is a \jz-subsemilattice of~$\Con A$.

The \emph{kernel} $\ker f=\setm{(x,y)\in A^2}{f(x)=f(y)}$ is a congruence of~$A$, for any morphism of algebras $f\colon A\to B$. A \emph{congruence-preserving extension} of an algebra~$A$ is an algebra~$B$ that contains~$A$ such that any congruence of~$A$ has a unique extension to~$B$. Equivalently, $\Conc f$ is an isomorphism, where $f\colon A\to B$ denotes the inclusion map.

We denote by $\At B$ the set of \emph{atoms} (that is, the elements that cover~$0$) of a Boolean lattice~$B$.

We denote by $\dual{L}$ the \emph{dual} of a lattice~$L$, that is, the lattice with the same universe as~$L$ and reverse ordering. Obviously, $\Conc\dual L=\Conc L$. If~$\cV$ is a variety of lattices, we denote by~$\dual{\cV}$ the class of all duals of lattices in~$\cV$. It is also the variety that satisfies all dual identities of those satisfied in~$\cV$. The assignment $L\mapsto\dual{L}$ is functorial. We denote by $\dual{\vec L}$ the dual of a diagram~$\vec L$ of lattices (that is, we change each lattice of~$\vec L$ to its dual while keeping the same transition maps).

Let $X$ be set, let $\kappa$ be a cardinal. We denote by~$\Pow(X)$ the powerset of~$X$ and we set:
\[
[X]^{<\kappa}=\setm{Y\in\Pow(X)}{\card Y<\kappa}.
\]
The set $X$ is \emph{$\kappa$-small} if $\card X<\kappa$.

We identify a class~$\cK$ of algebras with the category of algebras of $\cK$ together with homomorphisms of algebras.

Let $\cK$ be a class of algebras on the same similarity type. The \emph{variety generated by $\cK$}, denoted by $\Var\cK$, is the smallest variety containing $\cK$. If $K$ is an algebra, we simply denote $\Var K$ instead of $\Var\set{K}$. A variety~$\cV$ is \emph{finitely generated} if there exists a finite class~$\cK$ of finite algebras such that $\cV=\Var\cK$. Equivalently, $\cV=\Var K$ for a finite algebra~$K$.

Given a poset $P$ and subsets~$X$ and $Q$ of $P$ we set:
\[
 Q\dnw X=\setm{q\in Q}{(\exists x\in X)(q\le x)}.
\]
In case $X=\set{a}$ is a singleton, then we shall write $Q\dnw a$ instead of~$Q\dnw\set{a}$. In case $P=Q$ we shall write $\dnw X$ instead of $P\dnw X$.

Given a poset $P$, we denote by $\Max P$ the set of all maximal elements of $P$, and we set $P^==P-\Max P$. Given $p,q\in P$, we say that~$q$ \emph{covers}~$p$, in notation $p\prec q$, if $p<q$ and there is no element $r\in P$ with $p<r<q$.

A poset $P$ is \emph{directed} if for all $x,y\in P$ there exists $z\in P$ such that $z\ge x,y$. A subset $Q$ of a poset $P$ is a \emph{lower subset} if $Q=P\dnw Q$, \emph{spanning} if $P=P\dnw Q=P\upw Q$. In case~$P$ has a least element~$0$ and a largest element~$1$, $Q$ is spanning if and only if $\set{0,1}\subseteq Q$. For posets~$P$ and~$Q$, a map $f\colon P\to Q$ is \emph{isotone} if $x\leq y$ implies that $f(x)\leq f(y)$ for all $x,y\in P$.

\section{The chain diagram of a lattice}\label{S:ChainDiag}

This section and the next one require familiarity with \emph{gamps} (cf. \cite[Sections~4, 5, and 6]{G3}). All \emph{gamps} are \emph{gamps of lattices} (cf. \cite[Definition~6.2]{G3}), all \emph{partial lifting} are \emph{lattice partial lifting} (cf. \cite[Definition~6.14]{G3}).

In a few words, a \emph{gamp} is a 4-tuple $\bA=(A^*,A,\delta_{\bA},\widetilde A)$, where $A^*\subseteq A$ are partial lattices, $\widetilde A$ is a \jzs, and $\delta_A\colon A^2\to\widetilde A$ is a distance compatible with $\vee$ and $\wedge$. We also require some additional properties (cf. \cite[Definitions~6.1 and~6.2]{G3}). A \emph{morphism of gamps from $\bA$ to $\bB$} is an ordered pair $\bff=(f,\widetilde f)$, where $f\colon A\to B$ is a morphism of partial latices, $\widetilde f\colon\widetilde A\to\widetilde B$ is a \jzs, and $\delta_{\bB}(f(x),f(y))=\widetilde f(\delta_{\bA}(x,y))$ for all $x,y\in A$. 

If $A$ is a lattice, then $\GA(A)=(A,A,\Theta_A,\Conc A)$ is a gamp. If $f\colon A\to B$ is a morphism of lattices, then $\GA(f)=(f,\Conc f)$ is a morphism of gamps. This defines a functor from the category of lattices to the category of gamps (cf. Definition 6.8). We also define a functor~$\CG$ from the category of gamps to the category of \jzs s by $\CG(\bA)=\widetilde A$ for each gamp $\bA=(A^*,A,\delta_{\bA},\widetilde A)$, and $\CG(\bff)=\widetilde f$ for each morphism $\bff=(f,\widetilde f)$ of gamps. Note that $\CG\circ\GA=\Conc$. A \emph{partial lifting} is a lifting, with respect to the $\CG$ functor, which satisfies a few additional properties \cite[Definition~6.14]{G3}.

Every lattice~$A$ can be identified with the associated gamp~$\GA(A)$; hence gamps generalize lattices. Once this identification is done, the~$\CG$ functor ``extends'' the $\Conc$ functor, and partial liftings generalize $\Conc$-liftings.

The aim of this section is, given a partial sublattice~$K$ of a lattice~$L$, to construct a diagram of lattices~$\vec A$, such that whenever $\Conc\circ\vec A$ has a partial lifting in a variety~$\cV$, then~$K$ is a partial sublattice of some $L'\in\cV\cup\dual{\cV}$.

We shall construct in Definition~\ref{D:chaindia} a diagram~$\vec A$, called the chain diagram of~$K$ in~$L$. In Theorem~\ref{T:good-liftingdia-imply-lifting} we shall prove that if $\Conc\circ\vec A$ has a partial lifting with enough direct congruence chains (cf. Definition~\ref{D:congruencechain}) then~$K$ is a partial sublattice of a partial lattice in the partial lifting. In Lemma~\ref{L:directing} we shall give diagrams that ``force'' congruence chains to be direct, then in Lemma~\ref{L:exists-good-diagram}, using Definition~\ref{D:productdiagram} and Lemma~\ref{L:diagramextension}, we shall glue all these diagrams together to obtain a diagram $\vec A'$, such that whenever $\vec \bB$ is a partial lifting of $\Conc\circ\vec A'$ with enough congruence chains, then~$K$ is a partial sublattice of a quotient of either $\bB_1$ or its dual.

\begin{remark}
The \emph{dual} of a partial lattice~$B$ is the partial lattice $\dual{B}$, with the same set of elements~$B$, $\Def_{\vee}(\dual{B})=\Def_{\wedge}(B)$, $\Def_{\wedge}(\dual{B})=\Def_{\vee}(B)$, $x\vee_{\dual{B}} y=x\wedge_B y$ for all $(x,y)\in\Def_{\vee}(\dual{B})$, and $x\wedge_{\dual{B}} y=x\vee_B y$ for all $(x,y)\in\Def_{\wedge}(\dual{B})$.

The \emph{dual} of a gamp~$\bB$ of lattices is $\dual{\bB}=(\dual{B^*},\dual{B},\delta_{\bB},\widetilde B)$.
\end{remark}

Let~$\cC$ be a set of nonempty finite chains. We put:
\begin{align*}
\JC(\cC)&=\set{\emptyset}\cup\setm{\set{C}}{C\in\cC},\\
\KC(\cC)&=\JC(\cC)\cup\setm{\set{C,D}\subseteq \cC}{\text{either }\lh(C)=\lh(D)=2\text{ or } C\subseteq D},\\
\IC(\cC)&=\KC(\cC)\cup\set{\top},
\end{align*}
ordered by inclusion on $\KC(\cC)$ and with $P\le \top$ for each $P\in \IC(\cC)$. Put
\begin{equation*}
E_P=
\begin{cases}
C &\text{if $P=\set{C}$}\\
\set{0,1} &\text{if $P=\emptyset$}
\end{cases},\quad\text{for each $P\in \JC(\cC)$}.
\end{equation*}
Let $e_{\emptyset,\set{C}}\colon \set{0,1}\to C$ be the bounds-preserving morphism, and $e_{P,P}=\id_{E_P}$ for each $P\in\JC(\cC)$. Thus $\vec E(\cC)=\famm{E_P,e_{P,Q}}{P\le Q\text{ in }\JC(\cC)}$ is a diagram of lattices, with $0,1$-homomorphisms of lattices.

The following definition describes the way our diagrams will be glued together.

\begin{definition}\label{D:productdiagram}
Let $T$ be a finite nonempty set, let~$I$ be a finite poset, let~$J$ be a lower subset of~$I$. Let~$\vec{A}^t=\famm{A^t_i,f^t_{i,j}}{i\le j\text{ in }I}$ be a diagram of algebras for all $t\in T$, such that~$\vec A^s\res J=\vec A^t\res J$ for all $s,t\in T$, that is
\begin{enumerate}
\item The equality~$A^s_j=A^{t}_j$ holds for all $s,t\in T$ and all $j\in J$.
\item The equality $f^s_{i,j}=f^{t}_{i,j}$ holds for all $s,t\in T$ and all $i\le j$ in~$J$.
\end{enumerate}
Set
\begin{equation*}
A_i=
\begin{cases}
A_i^t &\text{for any $t\in T$, if $i\in J$}\\
\prod_{t\in T}A_i^t &\text{if $i\in I-J$}
\end{cases},\quad\text{for each $i\in I$.}
\end{equation*}
If $i\in J$, denote $\pi_i^t=\id_{A_i}=\id_{A_i^t}$. If $i\in I-J$, denote $\pi_i^t\colon A_i\to A_i^t$ the canonical projection. Let $f_{i,j}\colon A_i\to A_j$ be defined by
\begin{align*}
f_{i,j}\colon A_i &\to A_j,\\
x &\mapsto
\begin{cases}
f_{i,j}^t(x) &\text{for any $t\in T$, if $j\in J$}\\
(f_{i,j}^t(\pi_i^t(x)))_{t\in T} &\text{if $j\not\in J$}.
\end{cases},\quad\text{for all $i\le j$ in~$I$.}
\end{align*}
Then~$\vec A=\famm{A_i,f_{i,j}}{i\le j\text{ in~$I$}}$ is the \emph{product of $(\vec{A}^t)_{t\in T}$ over~$J$}, and $\vec\pi^t=(\pi_i^t)_{i\in I}\colon \vec A\to  \vec A^t$ is \emph{the canonical projection}.
\end{definition}

\begin{remark}\label{R:productdiagram}
Let~$\cK$ be a class of bounded lattices closed under finite products. Consider the objects of Definition~\ref{D:productdiagram}, assume that $\vec{A}^t$ is a diagram in~$\cK$ for each $t\in T$. Then~$\vec A$ the product of $(\vec{A}^t)_{t\in T}$ over~$J$ is a diagram in~$\cK$.

Let $i\in I$ such that $A_i^t$ is finite for each $t\in T$, then $A_i$ is finite.
\end{remark}

Using the construction of the product in Definition~\ref{D:productdiagram} we see that $\vec A\res J=\vec A^t\res J$ for each $t\in T$.

The following lemma gives a way to extend diagrams indexed by $\IC(\cC)$ to $\IC(\cC')$ for $\cC\subseteq\cC'$.

\begin{lemma}\label{L:diagramextension}
Let~$\cK$ be a class of bounded lattices, let $\cC\subseteq \cC'$ be sets of finite chains in~$\cK$. Then $\IC(\cC)\subseteq \IC(\cC')$ and $\JC(\cC)\subseteq \JC(\cC')$. Furthermore, consider a diagram $\vec B=\famm{B_P,g_{P,Q}}{P\le Q\text{ in }\IC(\cC)}$ in~$\cK$ \pup{with $0,1$-homomorphisms of lattices}. If $\vec B\res \JC(\cC)=\vec E(\cC)$, then there exists a diagram $\vec B'$ of~$\cK$, indexed by $\IC(\cC')$, such that $\vec B'\res \IC(\cC)=\vec B$ and $\vec B'\res \JC(\cC')=\vec E(\cC')$.
\end{lemma}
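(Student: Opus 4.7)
My plan is to extend $\vec B$ to a diagram $\vec B'$ on $\IC(\cC')$ by specifying values only at the new indices in $\IC(\cC') - \IC(\cC)$, leaving $\vec B' \res \IC(\cC) = \vec B$. The inclusions $\IC(\cC) \subseteq \IC(\cC')$ and $\JC(\cC) \subseteq \JC(\cC')$ are immediate from the definitions, so the real content is the extension.

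First I would classify the new indices. A singleton $\set{C}$ belongs to $\JC(\cC') - \JC(\cC)$ exactly when $C \in \cC' - \cC$, and for such a singleton the required condition $\vec B' \res \JC(\cC') = \vec E(\cC')$ forces $B'_{\set{C}} = C$ and $g'_{\emptyset,\set{C}} = e_{\emptyset,\set{C}}$. A pair $\set{C, D}$ belongs to $\KC(\cC') - \KC(\cC)$ exactly when the chain condition (both of length~$2$, or $C \subseteq D$) holds while at least one of $C, D$ lies in $\cC' - \cC$, since any pair from $\cC$ satisfying the chain condition already belongs to $\KC(\cC)$. For each such new pair $P$ I set $B'_P = B_\top$ and $g'_{P, \top} = \id_{B_\top}$.

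The only substantive choice is, for each $C \in \cC' - \cC$, a $(\vee,\wedge,0,1)$-homomorphism $g'_{\set{C}, \top} \colon C \to B_\top$. Such a morphism always exists: since $B_\top \in \cK$ is a bounded lattice, the map sending $0_C$ to $0_{B_\top}$ and every other element of $C$ to $1_{B_\top}$ has image the two-element sublattice $\set{0_{B_\top}, 1_{B_\top}}$ and is easily checked to preserve both operations and the bounds. For $C \in \cC$, I keep $g'_{\set{C}, \top} = g_{\set{C}, \top}$. Given these choices, I set $g'_{\set{C}, P} = g'_{\set{C}, \top}$ whenever $\set{C} \leq P$ with $P = \set{C, D}$ a new pair, and $g'_{\emptyset, P} = g_{\emptyset, \top}$ for every new pair $P$.

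Verification of functoriality reduces to a handful of triples $P \leq Q \leq R$ that involve a new index. Every such composition either factors through $B_\top$ with the identity piece $g'_{P, \top} = \id_{B_\top}$, or collapses thanks to the bounds-preservation property: both $g'_{\set{C}, \top} \circ e_{\emptyset,\set{C}}$ and $g'_{\set{D}, \top} \circ e_{\emptyset,\set{D}}$ equal $g_{\emptyset, \top}$, so the two paths $\emptyset \to \set{C} \to \set{C,D}$ and $\emptyset \to \set{D} \to \set{C,D}$ yield the same morphism $\set{0,1} \to B_\top$. The two restriction conditions hold by construction, and every $B'_P$ lies in $\cK$ (being either a member of $\vec B$, a chain from $\cC' \subseteq \cK$, or the lattice $B_\top$). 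I do not anticipate a serious obstacle; the only subtlety is to confirm that the chain-condition clause in $\KC$ really does force every $\set{C, D} \in \KC(\cC') - \KC(\cC)$ to contain a chain from $\cC' - \cC$, which is exactly what the definition of $\KC(\cC)$ makes transparent.
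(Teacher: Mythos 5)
Your construction is correct --- all the commutativity checks you outline do go through, and both restriction conditions hold by construction --- but it differs from the paper's proof in two genuine ways. The paper first reduces to the case $\cC'=\cC\cup\set{C'}$ of a single new chain (iterating to get the general case), whereas you extend over all of $\cC'-\cC$ at once; this obliges you to handle pairs consisting of two new chains, which your uniform placement of $B_\top$ at every new pair, with $g'_{P,\top}=\id_{B_\top}$, takes care of automatically (the paper's one-chain-at-a-time reduction never meets such pairs). Second, at a new pair $\set{C,C'}$ with $C\in\cC$ the paper puts the \emph{chain} $B_{\set{C}}=C$, with $\id_C$ from $\set{C}$ and the collapse $e_{\emptyset,\set{C}}\circ f_{C'}$ from $\set{C'}$, where $f_{C'}\colon C'\to\two$ kills everything below $1_{C'}$; you instead put $B_\top$. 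Both arguments rest on the same key mechanism: since all morphisms are $0,1$-homomorphisms and $B'_\emptyset=\set{0,1}$, every new chain can be collapsed onto the bounds, and any composite out of $\emptyset$ is then forced to equal the unique $0,1$-homomorphism, so commutativity is automatic. What the paper's more economical choice buys is that every object of the extended diagram below $\top$ remains a finite chain, hence finite and distributive; this is exactly what lets Lemma~\ref{L:exists-good-diagram} assert that $A_P$ is finite \emph{and distributive} for all $P<\top$. With your choice, in that application the new pairs would carry a finite product of copies of $M_3$ or $N_5$, so distributivity is lost there, though finiteness survives --- and finiteness is the only property actually used downstream (Theorem~\ref{T:premajorcritpoint}). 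So your proof establishes the lemma as stated and would still serve the paper's purposes, at the cost of slightly weakening a side conclusion elsewhere.
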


\begin{proof}
It is sufficient to establish the result in case $\cC'=\cC\cup\set{C'}$, for some chain~$C'$. For $P\in \IC(\cC')$, let:
\begin{align*}
B'_P=
\begin{cases}
B_P &\text{if $P\in \IC(\cC)$}\\
B_{\set{C}}=C  &\text{if $P=\set{C,C'}$, with~$C\in\cC$,}\\
C'  &\text{if $P=\set{C'}$}
\end{cases}
\end{align*}
For a finite chain~$C$, put 
\begin{align*}
f_C\colon C &\to \two,\\
x &\mapsto
\begin{cases}
0 & \text{if $x<1_C$,}\\
1 & \text{if $x=1_C$.}
\end{cases}
\end{align*}
Let $P\le Q$ in $\IC(\cC')$. Set $g'_{P,P}=\id_{B'_P}$, and if $P<Q$,
\begin{align*}
g'_{P,Q}=
\begin{cases}
g_{P,Q} &\text{if $P,Q\in \IC(\cC)$,}\\
g_{\set{C},\top}  &\text{if $P=\set{C,C'}$ and $Q=\top$, with~$C\in\cC$},\\
e_{\emptyset,\set{C'}} &\text{if $P=\emptyset$ and $Q=\set{C'}$},\\
e_{\emptyset,\set{C}} &\text{if $P=\emptyset$ and $Q=\set{C,C'}$, with~$C\in\cC$},\\
e_{\emptyset,\set{C}}\circ f_{C'} &\text{if $P=\set{C'}$ and $Q=\set{C,C'}$, with~$C\in\cC$},\\
\id_{C} &\text{if $P=\set{C}$ and $Q=\set{C,C'}$, with~$C\in\cC$},\\
g_{\emptyset,\top}\circ f_{C'} &\text{if $P=\set{C'}$ and $Q=\top$.}
\end{cases}
\end{align*}

The proof that $\vec B'=\famm{B'_P,g'_{P,Q}}{P\le Q\text{ in }\IC(\cC')}$ is a diagram in~$\cK$ is straightforward. Moreover, by construction $\vec B'\res \IC(\cC)=\vec B$ and $\vec B'\res \JC(\cC')=\vec E(\cC')$.
\end{proof}

We refer to \cite[Definition~6.2]{G3} for the definition of a \emph{chain} and the definition of \emph{cover} in a gamp of lattices. The following is proved in \cite[Lemma~6.4]{G3}
\begin{lemma}\label{L:presqueordre}
Let $x_0<\dots<x_n$ be a chain of a strong gamp of lattices~$\bB$. The equalities $x_i\wedge x_j=x_j\wedge x_i=x_i$ and $x_i\vee x_j=x_j\vee x_i=x_j$ hold in~$B$ for all $i\le j\le n$. Moreover the following statements hold:
\begin{align}
\delta_{\bB}(x_k,x_{k'})&\le \delta_{\bB}(x_i,x_j),\quad\text{for all $i\le k\le k'\le j\le n$}.
\label{E:po1}\\
\delta_{\bB}(x_i,x_j)&=\bigvee_{i\le k<j}\delta_{\bB}(x_k,x_{k+1}),\quad\text{for all $i\le j\le n$}.\label{E:po2}
\end{align}
\end{lemma}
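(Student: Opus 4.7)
My plan is to reduce everything to two facts: first, the lattice identities on the chain extend from adjacent pairs to arbitrary pairs, and second, the $\delta$-distance is additive along a chain (equation~\eqref{E:po2}), from which the monotonicity~\eqref{E:po1} is an immediate corollary. The hypothesis that $\bB$ is a \emph{strong} gamp is what lets us claim that the joins and meets involved are all defined in~$B$, so that arguments mimicking those valid in an honest lattice go through.

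First I would prove, by induction on $j-i$, that $x_i\vee x_j=x_j$, $x_j\vee x_i=x_j$, $x_i\wedge x_j=x_i$ and $x_j\wedge x_i=x_i$ for $i\le j\le n$. The base case $j=i+1$ is exactly the definition of a chain in a gamp of lattices (cf.\ \cite[Definition~6.2]{G3}), while the definition of a \emph{strong} gamp guarantees that $x_k\vee x_{k'}$ and $x_k\wedge x_{k'}$ belong to~$B$ and coincide with the values computed in~$B^*$ for every pair from the chain. The inductive step is then the usual computation $x_i\vee x_{j+1}=x_i\vee(x_j\vee x_{j+1})=(x_i\vee x_j)\vee x_{j+1}=x_j\vee x_{j+1}=x_{j+1}$, with the associativity step justified by the strong gamp axiom that all three-fold joins along a chain are defined and associative; the meet case and the commutativity identities are handled symmetrically.

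Next I would prove~\eqref{E:po2} by induction on $j-i$, the cases $j=i$ and $j=i+1$ being trivial. For the step, using the triangle-like axiom $\delta_{\bB}(x,z)\le\delta_{\bB}(x,y)\vee\delta_{\bB}(y,z)$ that holds for any three elements of a gamp, and the refinement (available on a chain of a strong gamp) $\delta_{\bB}(x,z)=\delta_{\bB}(x,y)\vee\delta_{\bB}(y,z)$ whenever $x\le y\le z$ in~$B^*$, applied to the triple $x_i\le x_j\le x_{j+1}$, I would obtain
\[
\delta_{\bB}(x_i,x_{j+1})=\delta_{\bB}(x_i,x_j)\vee\delta_{\bB}(x_j,x_{j+1}),
\]
and the induction hypothesis then yields the desired formula. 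Finally,~\eqref{E:po1} follows by applying~\eqref{E:po2} to the subchain $x_i\le x_k\le x_{k'}\le x_j$, which exhibits $\delta_{\bB}(x_k,x_{k'})$ as one of the joinands in the decomposition of $\delta_{\bB}(x_i,x_j)$.

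The main obstacle is purely bookkeeping: one must verify that each time an operation or distance value is invoked it is actually defined (i.e., lies in~$\Def_{\vee}(\bB)$ or~$\Def_{\wedge}(\bB)$, or forms an admissible triple for~$\delta_{\bB}$), which is precisely where the qualifier ``strong'' is used. Once the existence of the relevant values and the additivity of~$\delta_{\bB}$ along a three-element chain are extracted from \cite[Section~6]{G3}, the lemma becomes a short induction.
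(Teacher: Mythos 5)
A preliminary remark on the comparison itself: the paper contains no proof of this lemma --- it is imported wholesale, as stated just above it, from \cite[Lemma~6.4]{G3}. So your proposal must be judged against what the gamp axioms actually supply, and there it has a genuine circularity. The ``refinement'' $\delta_{\bB}(x,z)=\delta_{\bB}(x,y)\vee\delta_{\bB}(y,z)$ for $x\le y\le z$, which you invoke as ``available on a chain of a strong gamp,'' is not an axiom of (strong) gamps: it is exactly the case $n=2$ of \eqref{E:po2}, i.e., the statement to be proved. What a gamp distance gives for free is only the triangle inequality $\delta_{\bB}(x,z)\le\delta_{\bB}(x,y)\vee\delta_{\bB}(y,z)$. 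The reverse inequality is the whole content, and it must be derived from the compatibility of $\delta_{\bB}$ with the partial lattice operations once the comparabilities are in hand: for instance $\delta_{\bB}(x,y)=\delta_{\bB}(x\wedge y,\,z\wedge y)\le\delta_{\bB}(x,z)\vee\delta_{\bB}(y,y)=\delta_{\bB}(x,z)$, and symmetrically $\delta_{\bB}(y,z)=\delta_{\bB}(x\vee y,\,z\vee y)\le\delta_{\bB}(x,z)$. These are instances of \eqref{E:po1}, and their join yields the missing inequality. In other words, the correct logical order is: comparabilities first, then \eqref{E:po1} via compatibility of the distance with $\vee$ and $\wedge$, then \eqref{E:po2} from \eqref{E:po1} together with the triangle inequality. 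Your proposal runs the dependency backwards (\eqref{E:po1} as a corollary of \eqref{E:po2}) and hides the only nontrivial step inside a postulate that is not in the definitions.

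The same defect affects your treatment of the first assertion. The step $x_i\vee(x_j\vee x_{j+1})=(x_i\vee x_j)\vee x_{j+1}$ is justified by an invented ``strong gamp axiom that all three-fold joins along a chain are defined and associative.'' Partial algebras satisfy no identities --- that is precisely what distinguishes them from algebras --- and no associativity law is part of the definition of a (strong) gamp. What strongness buys is definedness of the relevant meets and joins of elements of~$B^*$ (this is how it is used elsewhere in the paper, e.g.\ in the proof of Lemma~\ref{L:mainchaindia}); identifying the \emph{value} of $x_i\vee x_j$ then still requires an argument, again routed through the distance and its compatibility properties rather than through equational manipulation. As written, both halves of the lemma are thus reduced to assumptions that are not available, so the proposal does not constitute a proof, even though its outer skeleton (establish the three-element case, then induct along the chain) could be repaired once those two ingredients are actually proved.
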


\begin{definition}\label{D:congruencechain}
Let~$\bB$ be a gamp of lattices such that $\widetilde B$ is a finite Boolean lattice. A chain $x_0<x_1<\dots<x_n$ of~$\bB$ is a \emph{congruence chain} of~$\bB$ if there exists a bijection $\sigma\colon n\to\At\widetilde B$ such that $\delta_{\bB}(x_k,x_{k+1})=\sigma(k)$ for each $k<n$.

Let~$C=\set{c_0,\dots,c_n}$ be a chain with $c_0<c_1<\dots<c_n$ and let $\xi\colon\widetilde B\to\Conc C$ be an isomorphism. A congruence chain $x_0<\dots<x_n$ of~$\bB$ is \emph{direct for $(\xi,C)$} if $\xi(\delta_{\bB}(x_k,x_{k+1}))=\Theta_C(c_k,c_{k+1})$ for all $k<n$. We simply say that $x_0<x_1<\dots<x_n$ is a \emph{direct congruence chain of~$\bB$} in case $\xi$ and~$C$ are both understood.

In both cases $x_0$ and $x_n$ are the \emph{extremities} of the congruence chain.

Given a lattice~$B$ with a finite Boolean congruence lattice, a \emph{congruence chain of~$B$} is a congruence chain of $\GA(B)$.
\end{definition}

Given a lattice~$B$ with a finite Boolean congruence lattice, a chain $x_0<x_1<\dots<x_n$ is a congruence chain of~$B$ if and only if there exists a bijection $\sigma\colon n\to\At\widetilde B$ such that $\Theta_B(x_k,x_{k+1})=\sigma(k)$ for each $k<n$.

\begin{remark}\label{R:congruencechain}
Let~$C$ be a chain of length~$2$, let $\bB$ be a strong gamp of lattices, and let $\xi\colon\widetilde B\to\Conc C$ be an isomorphism. A congruence chain $x_0<x_1<x_2$ of~$\bB$ is either direct or dually direct, that is, either  $x_0<x_1<x_2$ is a direct congruence chain of~$\bB$ or $x_2<_{\dual{\bB}}x_1<_{\dual{\bB}}x_0$ is a direct congruence chain of~$\dual{\bB}$.

A finite chain~$C$ is a congruence chain of a lattice~$B$ if an only if~$B$ is a congruence-preserving extension of~$C$. In particular if~$B$ is a finite distributive lattice, then every maximal chain of~$B$ is a congruence chain of~$B$.
\end{remark}

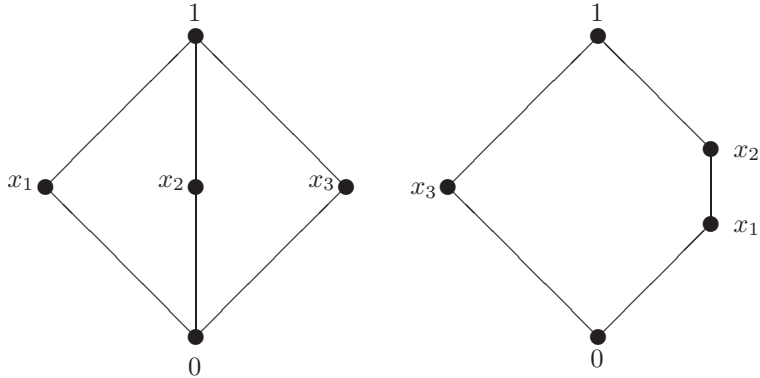
\begin{figure}[here,bottom,top]
\caption{The lattices $M_3$ and $N_5$.}\label{F:treillis_M3_N5}
\setlength{\unitlength}{1mm}
\begin{picture}(50,55)(-5,-5)
\put(20,0){\line(1,1){20}}
\put(20,0){\line(-1,1){20}}
\put(20,0){\line(0,1){20}}
\put(19,-5){0}
\put(-5,20){$x_1$}
\put(15,20){$x_2$}
\put(35,20){$x_3$}
\put(19,42){1}

\put(0,20){\line(1,1){20}}
\put(20,20){\line(0,1){20}}
\put(40,20){\line(-1,1){20}}

\put(20,0){\circle*{2}}
\put(0,20){\circle*{2}}
\put(20,20){\circle*{2}}
\put(40,20){\circle*{2}}
\put(20,40){\circle*{2}}
\end{picture}\quad
\begin{picture}(50,50)(-5,-5)
\put(20,0){\line(-1,1){20}}
\put(20,0){\line(1,1){15}}
\put(35,15){\line(0,1){10}}
\put(0,20){\line(1,1){20}}
\put(35,25){\line(-1,1){15}}
\put(20,0){\circle*{2}}
\put(0,20){\circle*{2}}
\put(35,15){\circle*{2}}
\put(35,25){\circle*{2}}
\put(20,40){\circle*{2}}
\put(19,-4){$0$}
\put(19,42){$1$}
\put(-5,19){$x_3$}
\put(38,14){$x_1$}
\put(38,24){$x_2$}
\end{picture}
\end{figure}

In our next lemma we shall construct a diagram of lattices that ``forces'' congruence chains to be direct.

\begin{lemma}\label{L:directing}
Let $K=N_5$ or $K=M_3$, with vertices labeled as in the Figure~\textup{\ref{F:treillis_M3_N5}}, let~$\cK$ be a class of bounded lattices closed under finite products such that every bounded sublattice of $K$ belongs to~$\cK$. Let~$C_1,C_2,C_3\in\cK$ be distinct finite chains with extremities~$0$ and~$1$ such that both~$C_1$ and~$C_2$ have length~$2$ while either~$C_3$ has length~$2$ or~$C_1,C_2\subseteq C_3$. Put $\cC=\set{C_1,C_2,C_3}$. Then there exists a diagram~$\vec A$ of finite lattices of~$\cK$ with $0,1$-lattice homomorphisms, indexed by $\IC(\cC)$, such that:
\begin{enumerate}
\item The equality $\vec A\res \JC(\cC)=\vec E(\cC)$ holds.
\item Let~$\vec\bB$ be a partial lifting of $\Conc\circ\vec A$, let $\vec\xi\colon\CG\circ\vec\bB\to\Conc\circ\vec A$ be a natural equivalence, and let $u\not=v$ in~$B_{\emptyset}^*$. If $\bB_{\set{C_k}}$ contains a direct congruence chain with extremities $g_{\emptyset,\set{C_k}}(u)$ and $g_{\emptyset,\set{C_k}}(v)$ for each $k\in\set{1,2}$, then every congruence chain of $\bB_{\set{C_3}}$ with extremities $g_{\emptyset,\set{C_3}}(u)$ and $g_{\emptyset,\set{C_3}}(v)$ is direct.
\end{enumerate}
Note that the indexing set $\IC(\cC)$ is the usual cube $\set{0,1}^3$.
\end{lemma}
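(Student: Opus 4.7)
The values on $\JC(\cC)$ are forced by condition (1). I identify the middle element $m_k\in C_k$ with $x_k\in K$ (in the $N_5$ case: take $m_1<m_2$ to be the non-bound elements of $C_1,C_2$ sitting inside $C_3$, identified with $x_1,x_2$). In the $M_3$ case, set $A_{\set{C_i,C_j}}:=\set{0,x_i,x_j,1}\subseteq M_3$ (a $4$-element Boolean sublattice) and $A_\top:=M_3$. In the $N_5$ case, set $A_{\set{C_1,C_2}}:=A_{\set{C_1,C_3}}:=A_{\set{C_2,C_3}}:=C_3$, with the inclusions $C_1,C_2\hookrightarrow C_3$ and the identity $C_3\to C_3$ as transition maps; then take $A_\top$ to be the ``stretched $N_5$'' obtained from $C_3$ by adjoining a new element $x_3$ with $x_3\wedge c=0$ and $x_3\vee c=1$ for every interior $c\in C_3$. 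This lattice embeds into $C_3\times N_5$ and hence, by closure of $\cK$ under finite products, lies in $\cK$. Transitions into $A_\top$ are the evident inclusions. Condition (1) is then immediate, and every lattice in $\vec A$ is finite.

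\textbf{Pair-node analysis for (2).} Fix a partial lifting $\vec\bB$, direct congruence chains $\vec y^{(k)}$ in $\bB_{\set{C_k}}$ with the prescribed extremities for $k=1,2$, and let $\vec z$ be any congruence chain of $\bB_{\set{C_3}}$ with those extremities. For each $i\in\set{1,2}$, push $\vec y^{(i)}$ and $\vec z$ into $\bB_{\set{C_i,C_3}}$ by the morphisms of $\vec\bB$; Lemma~\ref{L:presqueordre} controls the $\delta$-values along each image chain. A direct computation of the $\Conc$-transition maps $\Conc C_i\to\Conc A_{\set{C_i,C_3}}$ and $\Conc C_3\to\Conc A_{\set{C_i,C_3}}$ on atoms exhibits the characteristic ``crossover'' coming from the embedding $m_i\leftrightarrow x_i$ inside $K$: the bottom atom of $\Conc C_i$ and the top atom of $\Conc C_3$ both land on the atom of $\Conc A_{\set{C_i,C_3}}$ corresponding to collapsing $x_i$ to~$0$, and dually for the complementary atom. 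Hence directness of $\vec z$ is equivalent to the image of $\vec z$ in $\bB_{\set{C_i,C_3}}$ carrying a $\delta$-behaviour \emph{opposite} to that of the image of $\vec y^{(i)}$.

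\textbf{Conclusion and main obstacle.} Assume for contradiction that $\vec z$ is not direct. In the $M_3$ case Remark~\ref{R:congruencechain} yields ``dually direct'', so the $\delta$-pattern of the image of $\vec z$ in $\bB_{\set{C_i,C_3}}$ coincides with that of the image of $\vec y^{(i)}$; the rigidity of partial liftings over a $4$-element Boolean lattice then identifies the two image chains, and in particular their middle elements. Running this for $i=1,2$ and pushing to $\bB_\top$ collapses the three elements of $B_\top^*$ corresponding to $x_1,x_2,x_3\in M_3$ to a single element, contradicting the pairwise distinctness of $x_1,x_2,x_3$ in $M_3$ and the fact that $B_\top^*$ is a lattice in~$\cK$. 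In the $N_5$ case the same scheme applies: non-directness of $\vec z$ (now a permutation $\sigma$ of the atoms of $\Conc C_3$) forces, in at least one pair-node, a coincidence of the images of $\vec z$ and of some $\vec y^{(i)}$, which propagates to $\bB_\top$ and identifies $x_3$ with an interior element of $C_3$ inside $B_\top^*$, contradicting the $N_5$-structure of $A_\top$. The main obstacle is the rigidity property invoked inside the pair-nodes (respectively at the top): this is not a general gamp-theoretic fact but precisely what the definition of a \emph{lattice partial lifting} in~\cite[Definition~6.14]{G3} is designed to supply for the small finite lattices appearing in $\vec A$. Pinning down the correct clause, and in the $N_5$ case handling pair-nodes that are chains (rather than Boolean) by a delta-by-delta induction along $C_3$, is the technical heart of the argument.
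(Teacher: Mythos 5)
There is a genuine gap, and it is the one you yourself flag at the end: the ``rigidity property'' you invoke at the pair-nodes is not a fact available from \cite[Definition~6.14]{G3}, and it is exactly the content that has to be \emph{proved} here. The paper's proof supplies it by a device that is entirely absent from your construction: the diagram is not a single copy of sublattices of~$K$, but the \emph{product}, over the set $T$ of all isotone surjections $t\colon C_3\tosurj\set{0,x_3,1}$, of diagrams $\vec A^t$ in which the transition map out of the node $\set{C_3}$ is~$t$ (Definition~\ref{D:productdiagram}). This is essential because the diagram must be fixed \emph{before} the partial lifting~$\vec\bB$ is given: if a congruence chain of $\bB_{\set{C_3}}$ fails to be direct, the position of its defect (the minimal~$i$ with $\sigma(i)\neq i$ and $j=\sigma^{-1}(i)$) is unknown in advance, and one then projects onto the factor~$\vec A^t$ whose~$t$ collapses $[y_0,y_i]$ to~$0$, $(y_i,y_j]$ to~$x_3$, and $(y_j,y_n]$ to~$1$. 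Only after this projection do the explicit distance computations force $\delta'_Q\bigl(g_{\set{C_3},Q}(y'_{i+1}),g_{\set{C_k},Q}(x'_k)\bigr)=\zero$ in $Q=\set{C_k,C_3}$ for $k=1,2$, whence $\Theta_{A_R^t}(0,x_1)=\Theta_{A_R^t}(0,x_2)$ in $R=\set{C_1,C_2}$, contradicting the structure of $\set{0,x_1,x_2,1}$ inside~$K$. Your single fixed diagram (identity map $C_3\to C_3$ in the $N_5$ case, unspecified map $C_3\to\set{0,x_i,x_3,1}$ in the $M_3$ case) has no mechanism playing this role, so the ``identification of middle elements'' cannot be derived; it is not a rigidity feature of partial liftings but a consequence of having all the maps~$t$ available simultaneously.

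Two further points would need repair even if the rigidity step were granted. First, in the $M_3$ case you reduce non-directness to ``dually direct'' via Remark~\ref{R:congruencechain}; that remark applies only to chains of length~$2$, whereas the lemma allows $C_1,C_2\subseteq C_3$ with $C_3$ arbitrarily long, in which case non-directness is an arbitrary non-identity permutation of the atoms of $\Conc C_3$. Second, your top lattice in the $N_5$ case (the ``stretched $N_5$'') is not known to lie in~$\cK$: the hypothesis gives only closure under finite products together with membership of the bounded sublattices of~$K$ and of the chains $C_i$, not closure under sublattices; moreover the claimed embedding into $C_3\times N_5$ fails as soon as $C_3$ has more than two interior elements, since those elements would need to form a chain of complements of a single element of~$N_5$, and no element of~$N_5$ has more than two complements. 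The paper avoids both problems by using only sublattices of~$K$, the chains $C_i$, and finite products thereof.
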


\begin{proof}
We can assume that~$C_1=\set{0,x_1,1}$ and~$C_2=\set{0,x_2,1}$. Let $0=y_0<y_1<y_2<\dots<y_n=1$ be the elements of~$C_3$. Put $D_3=\set{0,x_3,1}$ and set
 \[
 T=\setm{t}{t\colon C_3\tosurj D_3\text{ is isotone and surjective}}.
 \]
Let $t\in T$. Put:
\begin{align*}
A_\emptyset^t&=A_\emptyset=\set{0,1},\\
A_{\set{C_i}}^t&=A_{\set{C_i}}=C_i &  & \text{for $1\le i\le 3$},\\
A_{\set{C_i,C_j}}^t&=\set{0,x_i,x_j,1} & & \text{for $1\le i<j\le 3$},\\
A_{\top}^t&=K.
\end{align*}
Note that $A_P^t$ is a sublattice of $K$, in particular it belongs to $\cK$, for all $P\in\cC$.

For $P\le Q$ in $\IC(\cC)$, let $f_{P,Q}^t\colon A_P^t\to A_Q^t$ be the inclusion map if $P\not=\set{C_3}$ and $Q\not=\set{C_3}$, otherwise let:
\begin{align*}
f_{P,Q}^t=
\begin{cases}
\id_{A_P} &\text{if $P=Q=\set{C_3}$,}\\
t &\text{if $P=\set{C_3}$ and $Q>P$,}\\
e_{\emptyset,C_3} &\text{if $P=\emptyset$ and $Q=\set{C_3}$.}
\end{cases}
\end{align*}
Let $P<Q<R$ in $\IC(\cC)$. As all the maps involved are $0,1$-homomorphisms and~$A^t_{\emptyset}=\set{0,1}$, if $P=\emptyset$ then $f_{P,R}^t=f_{Q,R}^t\circ f_{P,Q}^t$. Now assume that $P>\emptyset$. If $P\not=\set{C_3}$, then $f_{P,Q}^t$, $f_{Q,R}^t$, and $f_{P,R}^t$ are all inclusion maps, thus $f_{P,R}^t=f_{Q,R}^t\circ f_{P,Q}^t$. If $P=\set{C_3}$, then $f_{P,Q}^t=t$, the morphism $f_{Q,R}^t$ is the inclusion map, and $f_{P,R}^t=t$, so $f_{P,R}^t=f_{Q,R}^t\circ f_{P,Q}^t$. Thus $\vec A^t=\famm{A_P^t,f_{P,Q}^t}{P\le Q\text{ in }\IC(\cC)}$ is a diagram of finite lattices of~$\cK$. Moreover by construction $\vec A^t\res \JC(\cC)=\vec E(\cC)$.

Let $\vec A=\famm{A_P,f_{P,Q}}{P\le Q\text{ in }\IC(\cC)}$ be the product of $(\vec A^t)_{t\in T}$ over $\JC(\cC)$ (cf. Definition~\ref{D:productdiagram}). Hence $\vec A\res \JC(\cC)=\vec E(\cC)$ and it follows from Remark~\ref{R:productdiagram} that~$\vec A$ is a diagram of finite lattices in~$\cK$.

Let $\vec\bB=\famm{\bB_P,\bgg_{P,Q}}{P\le Q\text{ in }\IC(\cC)}$ be a partial lifting of $\Conc\circ\vec A$. We can assume that $\vec\xi$ is the identity, that is, $\CG\circ\vec\bB=\Conc\circ\vec A$. Set $\delta_P=\delta_{\bB_P}$, for each $P\in\IC(\cC)$. Let $u\not=v$ in~$B_{\emptyset}^*$ with $u\wedge v=u$. Assume that $\bB_{\set{C_k}}$ contains a direct congruence chain $g_{\emptyset,\set{C_k}}(u)<x'_k<g_{\emptyset,\set{C_k}}(v)$ for $k=1,2$. Let $g_{\emptyset,\set{C_3}}(u)=y'_0<y'_1<\dots<y'_n=g_{\emptyset,\set{C_3}}(v)$ be a congruence chain of $\bB_{\set{C_3}}$.

Let $\sigma\colon\set{0,1,\dots,n-1}\to\set{0,1,\dots,n-1}$ be a bijection such that:
\begin{equation}\label{E:Eqn1}
\Theta_{\set{C_3}}(y_i,y_{i+1})= \delta_{\set{C_3}}(y'_{\sigma(i)},y'_{\sigma(i)+1}),\quad\text{for all $i<n$.}
\end{equation}
Assume that $\sigma$ is not the identity map. Let $i$ be minimal such that $\sigma(i)\not=i$. The minimality of $i$ implies that $\sigma(k)=k$ for all $k<i$, thus:
\[
\Theta_{C_3}(y_0,y_{i}) =\bigvee_{k<i}\Theta_{C_3}(y_k,y_{k+1})=\bigvee_{k<i} \delta_{\set{C_3}}(y'_k,y'_{k+1}).
\]
It follows from Lemma~\ref{L:presqueordre} that the following equality holds:
\begin{equation}
 \Theta_{C_3}(y_0,y_{i})=\delta_{\set{C_3}} (y'_0,y'_{i}).\label{E:Eqn2}
\end{equation}

Set $j=\sigma^{-1}(i)$. Then
\begin{equation}
\Theta_{C_3}(y_j,y_{j+1})= \delta_{\set{C_3}} (y'_i,y'_{i+1}).\label{E:Eqn3}
\end{equation}
It follows from Lemma~\ref{L:presqueordre},~\eqref{E:Eqn2}, and~\eqref{E:Eqn3} that the following equality holds:
\begin{equation}
\delta_{\set{C_3}} (y'_0,y'_{i+1})
=\Theta_{C_3}(y_0,y_{i})\vee\Theta_{C_3}(y_j,y_{j+1}).\label{E:Eqn2et3}
\end{equation}

Using the minimality of $i$, we obtain that $i<j$. Hence $\sigma$ defines by restriction a bijection from $\set{i,i+1,\dots,n-1}-\set{j}$ onto $\set{i+1,i+2,\dots,n-1}$, and so
\begin{align*}
\Theta_{C_3} (y_i,y_{j}) \vee \Theta_{C_3}(y_{j+1},y_{n})
&=\bigvee\Setm{\Theta_{C_3}(y_k,y_{k+1})}{k\in \set{i,i+1,\dots,n-1}-\set{j}}\\
&=\bigvee\Setm{\delta_{\set{C_3}}(y'_{\sigma(k)},y'_{\sigma(k)+1})}{k\in \set{i,i+1,\dots,n-1}-\set{j}}\\
&=\bigvee\Setm{\delta_{\set{C_3}}(y'_{s},y'_{s+1})}{s\in \set{i+1,i+2,\dots,n-1}}.
\end{align*}
It follows from Lemma~\ref{L:presqueordre} that the following equation is satisfied:
\begin{equation}
\delta_{\set{C_3}} (y'_{i+1},y'_{n})=\Theta_{C_3} (y_i,y_{j})\vee \Theta_{C_3}(y_{j+1},y_{n}). \label{E:Eqn4}
\end{equation}
Let:
\begin{align*}
t\colon C_3 &\to D_3\\
y_k &\mapsto
\begin{cases}
0 & \text{if $k\le i$}\\
x_3 & \text{if $i< k\le j$}\\
1 & \text{if $j<k$}
\end{cases}
,\quad\text{for all $k<n$.}
\end{align*}
Let $\vec \pi=(\pi_P)_{P\in \IC(\cC)}\colon\vec A\to\vec A^t$ be the canonical projection. The vector $\vec\chi=\Conc\circ\vec\pi$ is an ideal-induced natural transformation from $\CG\circ\vec\bB$ to $\Conc\circ\vec A^t$. Denote $\delta_P'=\chi_P\circ\delta_P$ for each $P\in\IC(\cC)$. Notice that~$A_P^t=A_P$, $\pi_P=\id_{A_P}$, $\chi_P=\id_{\Conc A_P}$, and $\delta_P'=\delta_P$ for each $P\in\JC(\cC)$. Let $P\le Q$ in $\IC(\cC)$ with $P\in\JC(\cC)$. Let $a,b\in B_P$. The following equations are satisfied:
\begin{align}
\delta_Q'(g_{P,Q}(a),g_{P,Q}(b))
&=\chi_Q( (\Conc f_{P,Q})( \delta_P(a,b)))\notag\\
&=(\Conc f_{P,Q}^t)( \chi_P( \delta_P(a,b)))\notag\\
&=(\Conc f_{P,Q}^t)(\delta_P(a,b)).\label{Eq:simplchidelta}
\end{align}

As $g_{\emptyset,\set{C_k}}(u)<x'_k<g_{\emptyset,\set{C_k}}(v)$ is a direct congruence chain of $\bB_{\set{C_k}}$, we obtain
\begin{equation}\label{Eq:dct}
\delta_{\set{C_k}}(g_{\emptyset,\set{C_k}}(u),x'_k)=\Theta_{C_k} (0,x_k).
\end{equation}

Let $k\in\set{1,2}$, let $Q\ge\set{C_k}$ in $\IC(\cC)$. The following equalities hold:
\begin{align*}
\delta_Q'(g_{\emptyset,Q}(u),g_{\set{C_k},Q}(x'_k))&=(\Conc f_{\set{C_k},Q}^t)(\delta_{\set{C_k}}(g_{\emptyset,\set{C_k}}(u),x'_k)) & &\text{by~\eqref{Eq:simplchidelta}.}\\
&=(\Conc f_{\set{C_k},Q}^t)(\Theta_{C_k} (0,x_k))& &\text{by~\eqref{Eq:dct}.}\\
&=\Theta_{A_Q^t} (0,x_k),
\end{align*}
hence
\begin{equation}\label{eq:prelem}
\delta_Q'(g_{\emptyset,Q}(u),g_{\set{C_k},Q}(x'_k))=\Theta_{A_Q^t}(0,x_k),\quad\text{for $k=1,2$, and $Q\ge\set{C_k}$.}
\end{equation}

Let $k\in\set{1,2}$ and set $Q=\set{C_k,C_3}$. Put $u'=g_{\set{C_3},Q}(y'_0)=g_{\emptyset,Q}(u)$, $v'=g_{\set{C_3},Q}(y'_n)=g_{\emptyset,Q}(v)$, and $a=g_{\set{C_3},Q}(y'_{i+1})$. The following equalities hold:
\begin{align*}
\delta_Q'(u',a)
&=\delta_Q'(g_{\set{C_3},Q}(y'_0),g_{\set{C_3},Q}(y'_{i+1}))\\
&=(\Conc t)(\delta_{\set{C_3}}(y'_0,y'_{i+1})) &&\text{by~\eqref{Eq:simplchidelta}, as $f_{\set{C_3},Q}^t=t$.}\\
&=(\Conc t)( \Theta_{C_3}(y_0,y_i)\vee\Theta_{C_3}(y_j,y_{j+1})) &&\text{by~\eqref{E:Eqn2et3}.}\\
&=\Theta_{A_Q^t}(t(y_0),t(y_i))\vee \Theta_{A_Q^t}(t(y_j),t(y_{j+1}))\\
&=\Theta_{A_Q^t}(x_3,1),
\end{align*}
hence
\begin{equation}\label{E:t(ua)}
\delta_Q'(u',a)= \Theta_{A_Q^t}(x_3,1).
\end{equation}
Similarly, it follows from~\eqref{E:Eqn4} that
\begin{equation}\label{E:t(av)}
\delta_Q'(a,v')= \Theta_{A_Q^t}(0,x_3).
\end{equation}

Put $b=g_{\set{C_k},Q}(x'_k)$. It follows from~\eqref{eq:prelem} that
\begin{equation}\label{E:t(ub)}
\delta_Q'(u',b)= \Theta_{A_Q^t}(0,x_k)= \Theta_{A_Q^t}(x_3,1).
\end{equation}
With a similar argument we obtain:
\begin{equation}\label{E:t(bv)}
\delta_Q'(b,v')= \Theta_{A_Q^t}(0,x_3).
\end{equation}

The equations~\eqref{E:t(ua)} and~\eqref{E:t(ub)} imply:
\[
\delta_Q'(a,b)\subseteq \delta_Q'(a,u')\vee\delta_Q'(u',b)=\Theta_{A_Q^t}(x_3,1).
\]
Similarly, from~\eqref{E:t(av)} and~\eqref{E:t(bv)}, we obtain:
\[
\delta_Q'(a,b)\subseteq \delta_Q'(a,v')\vee\delta_Q'(v',b)=\Theta_{A_Q^t}(0,x_3).
\]
Therefore, the following containments hold:
\[
\delta_Q'(a,b) \subseteq \Theta_{A_Q^t}(x_3,1)\cap\Theta_{A_Q^t}(0,x_3)=\zero_{A_Q^t},
\]
that is, $\delta_Q'(g_{\set{C_3},Q}(y'_{i+1}),g_{\set{C_k},Q}(x'_k))=\zero_{A_Q^t}$. Hence
\[
\delta_\top'(g_{\set{C_3},\top}(y'_{i+1}),g_{\set{C_k},\top}(x'_k))=\zero_{A_\top^t},\quad\text{for all $k\in\set{1,2}$.}
\]
So $\delta_\top'(g_{\set{C_1},\top}(x'_1),g_{\set{C_2},\top}(x'_2))=\zero_{A_\top^t}$. Put $R=\set{C_1,C_2}$, set $x''_1=g_{\set{C_1},R}(x'_1)$ and $x''_2=g_{\set{C_2},R}(x'_2)$. The following equalities hold:
\begin{align*}
\zero_{A_\top^t}&=\delta_\top'(g_{\set{C_1},\top}(x'_1),g_{\set{C_2},\top}(x'_2))\\
&=(\Conc f_{R,\top}^t)(\delta_R'(g_{\set{C_1},R}(x'_1),g_{\set{C_2},R}(x'_2)))\\
&=(\Conc f_{R,\top}^t)(\delta_R'(x''_1,x''_2)).
\end{align*}
Moreover, as $f_{R,\top}^t$ is an embedding, the map $\Conc f_{R,\top}^t$ separates~$0$, and so
\begin{equation}\label{eq:x1isx2}
\delta_R'(x''_1,x''_2) = \zero_{A_R^t}.
\end{equation}
The following equalities hold:
\begin{align*}
\Theta_{A_R^t}(0,x_1)&=\delta_R'( g_{\emptyset,R}(u),x''_1) & &\text{by~\eqref{eq:prelem}}\\
&=\delta_R'( g_{\emptyset,R}(u),x''_1)\vee \delta_R'(x''_1,x''_2)& &\text{by~\eqref{eq:x1isx2}}\\
&=\delta_R'( g_{\emptyset,R}(u),x''_2)\vee \delta_R'(x''_1,x''_2)\\
&=\delta_R'(g_{\emptyset,R}(u),x''_2)& &\text{by~\eqref{eq:x1isx2}}\\
&=\Theta_{A_R^t}(0,x_2)& &\text{by~\eqref{eq:prelem}}.
\end{align*}

For $K=N_5$, the lattice~$A_R^t$ is the three-element chain $0<x_1<x_2<1$. For $K=M_3$, the lattice~$A_R^t$ is the square $0<x_1,x_2<1$. In both cases, $\Theta_{A_R^t}(0,x_1)\not=\Theta_{A_R^t}(0,x_2)$, a contradiction; thus $\sigma$ is the identity map, so the congruence chain $g_{\emptyset,C_3}(u)=y'_0<y'_1<\dots<y'_n=g_{\emptyset,C_3}(v)$ is direct.
\end{proof}

A partial sublattice $B$ of $A$ is \emph{full} if it satisfies the following condition: For all $x,y\in B$, if $x\wedge_A y$ is defined and $x\wedge_A y\in B$ then $x\wedge_B y$ is defined (note that $x\wedge_B y=x\wedge_A y$); there is a similar condition for $\vee$ (cf. \cite[Definition 4.4]{G3}).

\begin{definition}\label{D:chaindia}
Let~$L$ be a nontrivial bounded lattice, let~$\cC$ be a set of spanning finite chains of~$L$. Set~$A_\emptyset=\set{0,1}$, set~$A_\top=L$, and let~$A_P$ be the sublattice of~$L$ generated by $\bigcup_{C\in P}C$ for each $P\in \IC(\cC)-\set{\emptyset,\top}$. Let $f_{P,Q}\colon A_P\to A_Q$ be the inclusion map, for all $P\le Q$ in $\IC(\cC)$. Then $\vec A=\famm{A_P,f_{P,Q}}{P\le Q\text{ in }\IC(\cC)}$ is the \emph{$\cC$-chain diagram of~$L$}.

Let~$K$ be a spanning full partial sublattice of~$L$. We denote by~$\cC_K$ be the set of all spanning chains of~$L$ of length either~$2$ or~$3$  contained in $K$. \emph{The chain diagram of~$K$ in~$L$} is the~$\cC_K$-chain diagram of~$L$. Any $C\in\cC_K$ is a lattice which is a full partial sublattice of $K$.
\end{definition}

Let~$C\in\cC$, hence~$A_{\set{C}}=C$. Thus $\vec A\res  \JC(\cC)=\vec E(\cC)$.

Let $P\in \IC(\cC)-\set{\top}$. If $P=\emptyset$, then~$A_P=\set{0,1}$ is finite. If $P=\set{C}$, then~$A_P=C$ is finite. If $P=\set{C,D}$, with $\lh(C)=2=\lh(D)$ or~$C\subseteq D$, the lattice~$A_P$ generated by~$C\cup D$ is finite and distributive. Hence~$A_P$ is finite and distributive for each $P<\top$ in $\IC(\cC)$.

\begin{remark}
Let~$\cK$ be a class of bounded lattices, assume that $L$ and all bounded lattices generated by two elements (see Figure~\ref{F:lat-AQ}) belong to~$\cK$. Let~$K$ be a spanning full partial sublattice of~$L$, then the chain diagram of~$K$ in~$L$ is a diagram in $\cK$.
\end{remark}

\begin{remark}
Let $\vec A=\famm{A_i,f_{i,j}}{i\le j\text{ in }I}$ be a diagram indexed by a poset~$I$, let $\vec\bB=\famm{\bB_i,\bgg_{i,j}}{i\le j\text{ in }I}$ be a partial lifting of $\Conc\circ\vec A$. We can assume that $\CG\circ\vec\bB=\Conc\circ\vec A$ (cf. \cite[Remark~6.15]{G3}). Let $i<j$ in~$I$, let $x,y$ in~$A_i$, let $x',y'$ in~$B_i$, assume that $\delta_{\bB_i}(x',y')=\Theta_{A_i}(x,y)$. Then:
\begin{align*}
\delta_{\bB_j}(g_{i,j}(x'),g_{i,j}(y'))&=(\Conc f_{i,j})(\delta_{\bB_i}(x',y'))\\
&=(\Conc f_{i,j})(\Theta_{A_i}(x,y))\\
&=\Theta_{A_j}(f_{i,j}(x),f_{i,j}(y)).
\end{align*}
In particular, if both $f_{i,j}$ and $g_{i,j}$ are inclusion maps, then $\delta_{\bB_j}(x',y')=\Theta_{A_j}(x,y)$.
\end{remark}

The following lemma is a first step to prove Theorem~\ref{T:good-liftingdia-imply-lifting}; it handles a few particular cases of meets and joins. It also shows that different congruence chains of a partial lifting are identical provided they correspond to the same element of the chain diagram.

\begin{lemma}\label{L:mainchaindia}
Let~$L$ be a bounded lattice, let~$\cC$ be a set of spanning chains of~$L$, let~$\vec A$ be the~$\cC$-chain diagram of~$L$, and let $\vec\bB=\famm{\bB_P,\bgg_{P,Q}}{P\le Q\text{ in }\IC(\cC)}$ be a partial lifting such that $g_{P,Q}$ is the inclusion map for all $P\le Q$ in $\IC(\cC)$. Let $\vec\xi\colon\CG\circ\vec\bB\to\Conc\circ\vec A$ be a natural equivalence, let $u,v$ in~$B_{\emptyset}$, let $x_1\not=x_2$ in $L-\set{0,1}$. Set~$C_1=\set{0,x_1,1}$ and~$C_2=\set{0,x_2,1}$. Assume $\cC\supseteq\set{C_1,C_2}$, let $u<y_k<v$ be a direct congruence chain of $\bB_{\set{C_k}}$ for each $k\in\set{1,2}$. Then the following statements hold:
\begin{enumerate}
\item $\xi_{\set{C_1,C_2}}(\delta_{\bB_{\set{C_1,C_2}}}(y_1,y_2))=\Theta_{A_{\set{C_1,C_2}}}(x_1,x_2)$.
\item The elements $u,v,y_1,y_2$ are pairwise distinct.
\item If $x_1\wedge x_2=0$, then $y_1\wedge y_2=u$ in~$B_{\set{C_1,C_2}}$.
\item If $x_1\vee x_2=1$, then $y_1\vee y_2 = v$ in~$B_{\set{C_1,C_2}}$.
\item If $x_1< x_2$ then $y_1\wedge y_2=y_1$ in~$B_{\set{C_1,C_2}}$; that is, $y_1<y_2$ is a chain of~$\bB_{\set{C_1,C_2}}$.
\item Assume that $x_1< x_2$ and $D=\set{0,x_1,x_2,1}\in\cC$. Let $u<y_1'<y_2'<v$ be a direct congruence chain of $\bB_{\set{D}}$. Then $y_1=y_1'$ and $y_2=y_2'$.
\end{enumerate}
\end{lemma}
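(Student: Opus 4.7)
The strategy is to use naturality of $\vec\xi$ to transport the direct congruence chain identities from $\bB_{\set{C_k}}$ into the top member $A = A_{\set{C_1,C_2}}$ of the triangle, where $A$ is the sublattice of~$L$ generated by $\set{0,x_1,x_2,1}$. Since $A$ is a quotient of the free bounded lattice on two generators, it is distributive with at most six elements, so $\Conc A$ is finite and Boolean. Because every map $f_{P,Q}$ and $g_{P,Q}$ involved is an inclusion, the direct congruence chain hypotheses translate, by naturality, into
\[
\xi_{\set{C_1,C_2}}(\delta(u,y_k))=\Theta_A(0,x_k),\quad\xi_{\set{C_1,C_2}}(\delta(y_k,v))=\Theta_A(x_k,1),\quad k=1,2.
\]
From this point on, the proof reduces to computations in the Boolean lattice~$\Conc A$.

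For part~(1), the triangle inequality for~$\delta$ yields $\xi(\delta(y_1,y_2)) \leq \Theta_A(0,x_1)\vee\Theta_A(0,x_2) = \Theta_A(0,x_1\vee x_2)$ and symmetrically $\xi(\delta(y_1,y_2)) \leq \Theta_A(x_1\wedge x_2,1)$; distributivity of $\Conc A$ forces the meet of these upper bounds to equal $\Theta_A(x_1,x_2)$. For the reverse inequality, the triangle inequality also gives $\Theta_A(0,x_k) \leq \Theta_A(0,x_{3-k})\vee\xi(\delta(y_1,y_2))$; meeting both sides with an atom $\alpha$ of~$\Conc A$ that lies below $\Theta_A(x_1,x_2)$ but not below $\Theta_A(0,x_{3-k})$ forces $\alpha\le\xi(\delta(y_1,y_2))$ by Boolean distributivity, and running over all such atoms gives $\Theta_A(x_1,x_2) \leq \xi(\delta(y_1,y_2))$.

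Part~(2) then follows because each of $\Theta_A(0,x_k)$, $\Theta_A(x_k,1)$, and $\Theta_A(x_1,x_2)$ is nonzero in $\Conc A$ (as $L$ is nontrivial and $x_1\ne x_2$ lie in $L - \set{0,1}$), hence the corresponding $\delta$-values are nonzero and separate the elements $u,v,y_1,y_2$. For parts~(3)--(5) I would exploit the compatibility of $\delta$ with meets in a gamp of lattices. For~(3): since $u\le y_2$, one has $\delta(u,y_1\wedge y_2) = \delta(u\wedge y_2,y_1\wedge y_2) \leq \delta(u,y_1)$, and symmetrically $\le \delta(u,y_2)$, so $\xi(\delta(u,y_1\wedge y_2)) \leq \Theta_A(0,x_1)\wedge \Theta_A(0,x_2) = \Theta_A(0,x_1\wedge x_2) = \zero_A$ (using distributivity of~$A$), forcing $y_1\wedge y_2 = u$. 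Part~(4) is dual; for~(5), with $x_1<x_2$, combining the same compatibility trick with part~(1) gives $\xi(\delta(y_1,y_1\wedge y_2)) \leq \Theta_A(x_1,x_2)\wedge \Theta_A(x_2,1) = \zero_A$ in the chain $0<x_1<x_2<1$.

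For part~(6), $C_1\subseteq D$ implies $\set{C_1,D}\in\KC(\cC)$, so we may push both chains into $\bB_{\set{C_1,D}}$. By Lemma~\ref{L:presqueordre}, $\delta(y_1',v) = \delta(y_1',y_2')\vee\delta(y_2',v)$, whose $\xi$-image equals $\Theta_D(x_1,x_2)\vee\Theta_D(x_2,1) = \Theta_D(x_1,1)$; hence $\xi(\delta(y_1',v)) = \xi(\delta(y_1,v)) = \Theta(x_1,1)$ and similarly $\xi(\delta(u,y_1')) = \xi(\delta(u,y_1)) = \Theta(0,x_1)$. Two triangle inequalities then bound $\xi(\delta(y_1,y_1'))$ by $\Theta(0,x_1)\wedge \Theta(x_1,1) = \zero$, forcing $y_1 = y_1'$; the case $y_2=y_2'$ is identical via $\bB_{\set{C_2,D}}$. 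The main obstacle is part~(1), where one must carefully leverage the Boolean structure of $\Conc A$ to identify the meet of the two triangle-inequality upper bounds with $\Theta_A(x_1,x_2)$ and to establish the matching lower bound atom by atom; once~(1) is in hand, all remaining statements are routine consequences of it together with compatibility of~$\delta$ with the lattice operations.
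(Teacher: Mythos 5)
Your proposal is correct and follows essentially the same route as the paper's proof: transport the direct-congruence-chain identities $\delta(u,y_k)=\Theta(0,x_k)$, $\delta(y_k,v)=\Theta(x_k,1)$ along the inclusion maps into $\Conc A_{\set{C_1,C_2}}$ (resp.\ $\Conc A_{\set{C_k,D}}$), then compute in those finite distributive congruence lattices using the triangle inequality and meet-compatibility of the distance. The only differences are mechanical: in (1) the paper gets equality in one stroke by writing $\delta_P(y_1,y_2)=\delta_P(y_1,y_2)\vee\bigl(\delta_P(u,y_1)\cap\delta_P(y_1,v)\bigr)$ and distributing, which avoids your separate atom-by-atom lower bound (whose justification tacitly needs $\Theta_A(x_1,x_2)\cap\Theta_A(0,x_1)\cap\Theta_A(0,x_2)=\zero_A$, true in every quotient of the six-element lattice), and in (6) the paper argues via the meets $y_k\wedge y_k'$ and meet-compatibility where you use two pure triangle inequalities, a slightly cleaner but equally valid variant.
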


\begin{proof}
We can assume that $\CG\circ\vec\bB=\Conc\circ\vec A$ and thus that $\vec\xi$ is the identity. Put $\delta_R=\delta_{\bB_R}$ for each $R\in\IC(\cC)$. Put $P=\set{C_1,C_2}$.

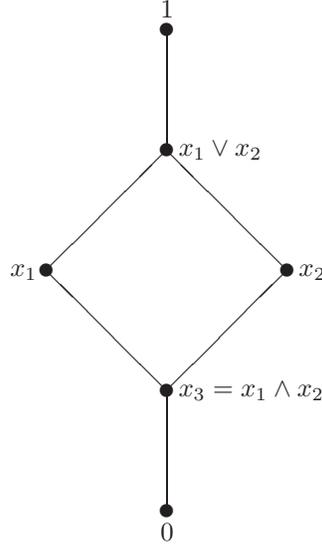
\begin{figure}[here,top,bottom]\caption{The free bounded lattice generated by $x_1$ and $x_2$.}
\setlength{\unitlength}{0.8mm}
\begin{picture}(50,90)(-5,-5)
\put(20,20){\line(1,1){20}}
\put(20,20){\line(-1,1){20}}
\put(40,40){\line(-1,1){20}}
\put(0,40){\line(1,1){20}}
\put(20,60){\line(0,1){20}}
\put(20,0){\line(0,1){20}}

\put(20,20){\circle*{2}}
\put(40,40){\circle*{2}}
\put(0,40){\circle*{2}}
\put(20,60){\circle*{2}}
\put(20,80){\circle*{2}}
\put(20,0){\circle*{2}}
\put(-6,39){$x_1$}
\put(42,39){$x_2$}
\put(22,59){$x_1\vee x_2$}

\put(19,-5){0}
\put(22,19){$x_3=x_1\wedge x_2$}
\put(19,82){1}
\end{picture}
\label{F:lat-AQ}
\end{figure}

As $u<y_k<v$ is a direct congruence chain of $\bB_{\set{C_k}}$ and $\set{C_k}\subseteq P$, the following equalities hold:
\begin{equation} \label{E:cong0x_k}
\delta_P(u,y_k) = \Theta_{A_{P}} (0,x_k),\quad\text{for all $k\in\set{1,2}$.}
\end{equation}
\begin{equation}\label{E:congx_k1}
\delta_P(y_k,v) = \Theta_{A_{P}} (x_k,1),\quad\text{for all $k\in\set{1,2}$.}
\end{equation}

Notice that $\Theta_{A_P}(0,x_1)\cap\Theta_{A_P}(x_1,1)=\zero_{A_P}$, thus:
\begin{align*}
\delta_P(y_1,y_2)&=\delta_P(y_1,y_2)\vee(\Theta_{A_P}(0,x_1)\cap\Theta_{A_P}(x_1,1))\\
&=\delta_P(y_1,y_2)\vee(\delta_P(u,y_1)\cap\delta_P(y_1,v)) && \text{by~\eqref{E:cong0x_k} and~\eqref{E:congx_k1}.}
\end{align*}
Therefore, as $\Conc A_P$ is distributive, we obtain
\begin{equation}
\delta_P(y_1,y_2)=(\delta_P(y_1,y_2)\vee\delta_P(u,y_1))\cap(\delta_P(y_1,y_2)\vee\delta_P(y_2,v)).
\end{equation}
Moreover, the following equalities hold:
\begin{align}
\delta_P(y_1,y_2)\vee\delta_P(u,y_1)&=\delta_P(u,y_1)\vee\delta_P(u,y_2)\notag\\
&=\Theta_{A_P}(0,x_1)\vee\Theta_{A_P}(0,x_2) &&\text{by~\eqref{E:cong0x_k}.}\notag\\
&=\Theta_{A_P}(0,x_1\vee x_2) &&\text{see Figure~\ref{F:lat-AQ}.}\label{E:congmajor1}
\end{align}
With a similar argument we obtain
\begin{equation}\label{E:congmajor2}
\delta_P(y_1,y_2)\vee\delta_P(y_2,v)=\Theta_{A_P}(x_1\wedge x_2,1).
\end{equation}
It follows from~\eqref{E:congmajor1} and~\eqref{E:congmajor2} that:
\[
\delta_P(y_1,y_2)=\Theta_{A_P}(0,x_1\vee x_2)\cap \Theta_{A_P}(x_1\wedge x_2,1)=\Theta_{A_P}(x_1,x_2).
\]

As $u<y_k<v$ is a congruence chain, it follows that $u\not=v$, $u\not=y_k$, and $y_k\not=v$ for all $k\in\set{1,2}$. Moreover, as $\delta_P(y_1,y_2)=\Theta_{A_P}(x_1,x_2)\not=\zero_{A_P}$, we get $y_1\not=y_2$.

Assume that $x_1\wedge x_2=0$, so the lattice~$A_{P}$ is (a quotient of) the lattice of Figure~\ref{F:lat-AQ}.

As $u<y_k<v$ is a congruence chain of $\bB_{\set{C_k}}$, we get that $u\wedge y_k=u$ in~$B_P$ for each $k\in\set{1,2}$. As $y_1,y_2\in B_P^*$, $y_1\wedge y_2$ is defined in~$B_P$. Thus~\eqref{E:cong0x_k} implies the following containments:
\[
\delta_P(u,y_1\wedge y_2)\subseteq \delta_P(u,y_1)\cap \delta_P(u,y_2)= \Theta_{A_{P}} (0,x_1)\cap \Theta_{A_{P}}(0,x_2)
= \zero_{A_{P}}
\]
and so $y_1\wedge y_2=u$ in~$B_P$. Similarly, if $x_1\vee x_2=1$, using Lemma~\ref{L:presqueordre} and~\eqref{E:congx_k1}, we obtain $y_1\vee y_2=v$ in~$B_P$.

Now assume that $x_1<x_2$. So the lattice~$A_P$ is a chain. Moreover, as $u,y_1,y_2,v$ belong to~$B_P^*$ and~$\bB_P$ is a strong gamp, the elements $y_1\wedge y_2$, $u\wedge y_1=u$, $u\wedge y_2=u$, and $v\wedge y_1=y_1$ are defined in~$B_P$. Thus $\delta_P(y_1\wedge y_2,y_1)\subseteq \delta_P(u,y_1\wedge y_2)\vee\delta_P(u,y_1)=\delta_P(u,y_1)$. Hence~\eqref{E:cong0x_k} and~\eqref{E:congx_k1} implies the following equalities:
\[
\delta_P(y_1\wedge y_2,y_1) \subseteq \delta_P(u,y_1)\cap \delta_P(y_2,v)= \Theta_{A_{P}} (0,x_1)\cap \Theta_{A_{P}}(x_2,1)=\zero_{A_{P}}
\]
therefore $y_1\wedge y_2=y_1$ in~$B_P$.

Put $D=\set{0,x_1,x_2,1}$, assume that $D\in\cC$, let $u<y_1'<y_2'<v$ be a direct congruence chain of $\bB_{\set{D}}$. Let $k\in\set{1,2}$. The following containment holds:
\[
\delta_{\set{C_k,D}}(y_k\wedge y'_k,y_k)\subseteq \delta_{\set{C_k,D}}(y_k\wedge y'_k,u)\vee \delta_{\set{C_k,D}}(u,y_k).
\]
Moreover $\delta_{\set{C_k,D}}(y_k\wedge y'_k,u)=\delta_{\set{C_k,D}}(y_k\wedge y'_k,u\wedge y'_k)\subseteq\delta_{\set{C_k,D}}(u,y_k)$. Therefore:
\begin{equation}\label{E:ykwykp1}
\delta_{\set{C_k,D}}(y_k\wedge y'_k,y_k)\subseteq \delta_{\set{C_k,D}}(u,y_k).
\end{equation}
The following containment also holds:
\begin{equation}\label{E:ykwykp2}
\delta_{\set{C_k,D}}(y_k\wedge y'_k,y_k)=\delta_{\set{C_k,D}}(y_k\wedge y'_k,y_k\wedge v)\subseteq \delta_{\set{C_k,D}}(y'_k,v).
\end{equation}
The containments \eqref{E:ykwykp1} and \eqref{E:ykwykp2} imply:
\begin{align*}
\delta_{\set{C_k,D}}(y_k\wedge y'_k,y_k)&\subseteq \delta_{\set{C_k,D}}(u,y_k)\cap \delta_{\set{C_k,D}}(y'_k,v)\\
&=\Theta_{A_{\set{C_k,D}}}(0,x_k) \cap \Theta_{A_{\set{C_k,D}}}(x_k,1)\\
&=\zero_{A_{\set{C_k,D}}}.
\end{align*}
Thus $y_k=y_k\wedge y_k'$. Similarly $y'_k=y_k\wedge y_k'$, thus $y_k=y'_k$.
\end{proof}

Given a partial lifting $\vec \bB$ of $\Conc\circ\vec A$ with enough direct congruence chains, where~$\vec A$ is the chain diagram of some partial sublattice~$K$ of~$L$, we can now construct a partial lattice in $\vec \bB$ isomorphic to~$K$.

\begin{theorem}\label{T:good-liftingdia-imply-lifting}
Let~$L$ be a bounded lattice and let~$K$ be a spanning partial sublattice of~$L$. Let~$\vec A$ be the chain diagram of~$K$ in~$L$ \pup{that is, the~$\cC_K$-chain diagram of~$L$, see Definition~\textup{\ref{D:chaindia}}}, let $\vec\bB=\famm{\bB_P,\bgg_{P,Q}}{P\le Q\text{ in }\IC(\cC_K)}$ be a partial lifting and let $\vec\xi\colon\CG\circ\vec\bB\to\Conc\circ\vec A$ be a natural equivalence. Let $u,v$ in~$B_{\emptyset}$. Assume that for each~$C\in\cC_K$ there exists a direct congruence chain of $\bB_{\set{C}}$ with extremities $g_{\emptyset,\set{C}}(u)$ and $g_{\emptyset,\set{C}}(v)$.

Given $x\in K-\set{0,1}$, set~$C_x=\set{0,x,1}$ and let $g_{\emptyset,\set{C_x}}(u)<t_x<g_{\emptyset,\set{C_x}}(v)$ be a direct congruence chain of $\bB_{\set{C_x}}$. Put:
\begin{align*}
h\colon K &\to B_\top,\\
x &\mapsto g_{\set{C_x},\top}(t_x),\\
0 &\mapsto g_{\emptyset,\top}(u),\\
1 &\mapsto g_{\emptyset,\top}(v).
\end{align*}
The map $h$ is an embedding of partial lattices and $(h,\xi_{\top})\colon (K,K,\Theta_L,\Conc L)\to\bB_{\top}$ is an embedding of gamps.
\end{theorem}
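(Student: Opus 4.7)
I would verify three properties of $h$: distance-compatibility with $\xi_\top$, injectivity, and preservation of every partial meet and join of $K$. Following the usual reduction (as in the proof of Lemma~\ref{L:mainchaindia}), I would first assume $\CG\circ\vec\bB = \Conc\circ\vec A$ and that $\vec\xi$ is the identity, so $\widetilde B_P = \Conc A_P$ for each $P \in \IC(\cC_K)$; write $\delta_P := \delta_{\bB_P}$.

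For the distances, the defining property of the direct congruence chain $g_{\emptyset,\set{C_x}}(u) < t_x < g_{\emptyset,\set{C_x}}(v)$ of $\bB_{\set{C_x}}$ yields $\delta_{\set{C_x}}(g_{\emptyset,\set{C_x}}(u), t_x) = \Theta_{C_x}(0, x)$ and $\delta_{\set{C_x}}(t_x, g_{\emptyset,\set{C_x}}(v)) = \Theta_{C_x}(x, 1)$ for each $x \in K - \set{0, 1}$; applying the morphism $\bgg_{\set{C_x}, \top}$ and using naturality of $\delta$ gives $\delta_\top(h(0), h(x)) = \Theta_L(0, x)$ and $\delta_\top(h(x), h(1)) = \Theta_L(x, 1)$, with Lemma~\ref{L:presqueordre} providing $\delta_\top(h(0), h(1)) = \Theta_L(0, 1)$. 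For distinct $x, y \in K - \set{0, 1}$, the index $\set{C_x, C_y}$ lies in $\IC(\cC_K)$ since both $C_x$ and $C_y$ have length~$2$; pushing $t_x, t_y$ into $\bB_{\set{C_x, C_y}}$, applying Lemma~\ref{L:mainchaindia}(1), and then pushing through $\bgg_{\set{C_x, C_y}, \top}$ gives $\delta_\top(h(x), h(y)) = \Theta_L(x, y)$. Since $\Theta_L(x, y) \neq \zero_L$ whenever $x \neq y$, this simultaneously establishes distance-preservation and the injectivity of $h$.

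For the preservation of partial operations I would treat meets (joins being dual). Fix $x, y \in K$ with $z := x \wedge_L y \in K$. The trivial cases where $\set{x, y} \cap \set{0, 1} \neq \es$ or $x = y$ are immediate. Assume $x, y \in K - \set{0, 1}$ are distinct. If $z \in \set{0, x, y}$, applying Lemma~\ref{L:mainchaindia}(3) or~(5) inside $\bB_{\set{C_x, C_y}}$ and pushing through $\bgg_{\set{C_x, C_y}, \top}$ establishes both that $(h(x), h(y)) \in \Def_\wedge(\bB_\top)$ and that $h(x) \wedge h(y) = h(z)$. Otherwise $z \notin \set{0, 1, x, y}$, in which case the spanning length-$3$ chains $D_x := \set{0, z, x, 1}$ and $D_y := \set{0, z, y, 1}$ both lie in $\cC_K$; the hypothesis provides direct congruence chains $u < a_1 < a_2 < v$ in $\bB_{D_x}$ and $u < b_1 < b_2 < v$ in $\bB_{D_y}$. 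Two applications of Lemma~\ref{L:mainchaindia}(6), in $\bB_{\set{C_z, D_x}}$ and in $\bB_{\set{C_x, D_x}}$ (both admissible since $C_z, C_x \subseteq D_x$), yield $g_{D_x, \top}(a_1) = h(z)$ and $g_{D_x, \top}(a_2) = h(x)$, and the symmetric statements hold for $D_y$; consequently $h(z) \leq h(x)$ and $h(z) \leq h(y)$ in $B_\top^*$. To conclude $h(x) \wedge h(y) = h(z)$ I would then work in $\bB_{\set{C_x, C_y}}$: since $A_{\set{C_x, C_y}}$ is the finite distributive sublattice of $L$ generated by $\set{0, x, y, 1}$ and contains $z$, one has $\Theta(0, x) \cap \Theta(0, y) = \Theta(0, z)$, and $\Theta(0, z)$ and $\Theta(z, 1)$ are complementary in $\Conc A_{\set{C_x, C_y}}$; Lemma~\ref{L:presqueordre} together with complementarity pins down $\delta_{\set{C_x, C_y}}(g_{\emptyset, \set{C_x, C_y}}(u), y_1 \wedge y_2) = \Theta(0, z)$, which on pushforward through $\bgg_{\set{C_x, C_y}, \top}$, combined with $h(z) \leq h(x) \wedge h(y)$ and the distance identities already established, delivers the equality $h(x) \wedge h(y) = h(z)$ in $\bB_\top$.

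The main obstacle is the final subcase: $\IC(\cC_K)$ admits neither a single index above $C_x, C_y, C_z$ nor one above both $D_x$ and $D_y$, so the identification $h(x) \wedge h(y) = h(z)$ cannot be read off from a single $\bB_P$ and must instead be assembled from partial information gathered separately in $\bB_{\set{C_x, C_y}}$, $\bB_{\set{C_z, D_x}}$, $\bB_{\set{C_z, D_y}}$, $\bB_{\set{C_x, D_x}}$, and $\bB_{\set{C_y, D_y}}$, and then reconciled inside $\bB_\top$ via the transition morphisms, relying crucially on distributivity of each intermediate lattice $A_P$ (for $P \neq \top$) so that the ambient congruences split as complementary pairs and the distance bounds coming from Lemma~\ref{L:presqueordre} collapse to equalities.
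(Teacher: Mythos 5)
Your proposal tracks the paper's own proof almost everywhere: the reduction to $\CG\circ\vec\bB=\Conc\circ\vec A$, the distance identities and injectivity via Lemma~\ref{L:mainchaindia}(1)--(2), the cases $x\wedge y\in\set{0,x,y}$ via Lemma~\ref{L:mainchaindia}(3) and (5), and the introduction of the length-$3$ chains $D_x,D_y$ together with the identifications from Lemma~\ref{L:mainchaindia}(6) are all exactly the paper's steps. The genuine gap is the final inference, ``delivers the equality $h(x)\wedge h(y)=h(z)$ in $\bB_\top$''. At that point you have: $h(z)\le h(x)$, $h(z)\le h(y)$, $\delta_\top(h(0),h(x)\wedge h(y))=\Theta_L(0,z)=\delta_\top(h(0),h(z))$, and $\delta_\top(h(z),h(x))=\Theta_L(z,x)$, $\delta_\top(h(z),h(y))=\Theta_L(z,y)$. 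Even granting $h(z)\le h(x)\wedge h(y)$ (itself not automatic in a partial lattice), the gamp axioms yield only
\[
\delta_\top\bigl(h(z),\,h(x)\wedge h(y)\bigr)\subseteq\Theta_L(0,z)\cap\Theta_L(z,x)\cap\Theta_L(z,y),
\]
and this intersection lives in $\Conc L$, where nothing forces it to vanish: $L$ is an arbitrary bounded lattice, and if $L$ is simple (with $0<z<x,y<1$, $z=x\wedge y$) all three congruences equal $\one_L$. In fact every fact you cite is also satisfied by a configuration $h(0)<h(z)<w<h(x),h(y)<h(1)$ with $w=h(x)\wedge h(y)\neq h(z)$ inside a simple lattice, so those facts cannot logically imply the equality. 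Distributivity of the $A_P$ with $P<\top$ can only be exploited for comparisons made \emph{inside} some $\bB_P$ with $P<\top$, and in your argument the two elements to be identified, $h(z)$ and $h(x)\wedge h(y)$, never coexist in any such $\bB_P$; ``reconciling them inside $\bB_\top$ via the transition morphisms'' is precisely what cannot be done.

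The paper closes this case by making the two elements coexist below the top: it runs the computation in the gamp indexed by the mixed pair $\set{C_x,D_y}$ (in its notation $\set{C_1,D_2}$). There, by Lemma~\ref{L:mainchaindia}(6), the members of the direct congruence chain of $\bB_{\set{D_y}}$ are (copies of) $t_z$ and $t_y$, so $t_x$, $t_y$, $t_z$ and the meet $t_x\wedge t_y$ all lie in one gamp whose distance lattice $\Conc A_{\set{C_x,D_y}}$ is Boolean, since $A_{\set{C_x,D_y}}=\set{0,z,x,y,x\vee y,1}$ is finite distributive; then
\[
\delta_{\set{C_x,D_y}}(t_z,\,t_x\wedge t_y)\subseteq\delta_{\set{C_x,D_y}}(u,t_x)\cap\delta_{\set{C_x,D_y}}(t_z,t_y)=\Theta(0,x)\cap\Theta(z,y)=\zero_{A_{\set{C_x,D_y}}}
\]
finishes the case. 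You were right to notice that such a pair (a length-$2$ chain not contained in the length-$3$ chain) is not admitted by the letter of the definition of $\KC(\cC_K)$; the paper's proof uses it nonetheless, so the index poset must be understood (or corrected) to include these pairs, whose generated sublattices are still finite distributive. But the correct response to that observation is to work in such an index, not to retreat to $\bB_\top$, where the identification cannot be completed.
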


\begin{remark}
In the context of Theorem~\ref{T:good-liftingdia-imply-lifting}, note that $C_x$ is a chain of $L$ contained in $\cK$, therefore $C_x\in\cC_K$.
\end{remark}

\begin{proof}
In this proof we set~$\cC=\cC_K$. As $f_{P,Q}$ is an embedding of lattices and~$\cI(\cC_K)$ has a largest element, we can assume that $g_{P,Q}$ is the inclusion map for all $P\le Q$ in $\IC(\cC)$. We can also assume that $\CG\circ\vec\bB=\Conc\circ\vec A$. Denote $\delta_P=\delta_{\bB_P}$ for each $P\in\IC(\cC)$.
With those assumptions , $h(0)=u$, $h(1)=v$ and $h(x)=t_x$ for all $x\in K-\set{0,1}$. It follows from Lemma~\ref{L:mainchaindia}(2) that the map $h$ is one-to-one.

Let $x_1, x_2\in K$. If either $x_1 = 0$, $x_1=1$, $x_2=0$, $x_2=1$, or $x_1=x_2$, then $h(x_1\wedge x_2)=h(x_1)\wedge h(x_2)$ and $h(x_1\vee x_2)=h(x_1)\vee h(x_2)$. Now assume that $x_1\not=x_2$, $x_1\not\in\set{0,1}$, and $x_2\not\in\set{0,1}$. Set~$C_k=\set{0,x_k,1}$, set $y_k=h(x_k)$. Hence $u<y_k<v$ is a direct congruence chain of $\bB_{\set{C_k}}$ for all $k\in\set{1,2}$. If $x_1\wedge x_2=0$, it follows from Lemma~\ref{L:mainchaindia}(3) that $y_1\wedge y_2=u$ in~$B_{\set{C_1,C_2}}$, so it also holds in~$B_\top$, hence $h(x_1\wedge x_2)=h(0)=u=y_1\wedge y_2=h(x_1)\wedge h(x_2)$. Similarly, if $x_1\vee x_2=1$, then $h(x_1\vee x_2)=h(x_1)\vee h(x_2)$, and, if $x_1<x_2$ then $h(x_1)\wedge h(x_2)=h(x_1)$, so $h(x_1\wedge x_2)=h(x_1)=h(x_1)\wedge h(x_2)$ and $h(x_1\vee x_2)=h(x_2)=h(x_1)\vee h(x_2)$.

Assume that $x_1$ and $x_2$ are incomparable and that $x_1\wedge x_2\in K-\set{0}$. Set $x_3=x_1\wedge x_2$. Put~$C_k=\set{0,x_k,1}$, set $y_k=h(x_k)$, hence $u<y_k<v$ is a direct congruence chain of~$\bB_{\set{C_k}}$, for all $k\in\set{1,2,3}$. Put $D_k=\set{0,x_3,x_k,1}$ for all $k\in\set{1,2}$. Let $u<y_3'<y_2'<v$ be a direct congruence chain of $\bB_{\set{D_2}}$ and let $u<y_3''<y_1''<v$ be a direct congruence chain of $\bB_{\set{D_1}}$. It follows from Lemma~\ref{L:mainchaindia}(6) that $y_3=y_3'=y_3''$, $y_2=y_2'$, and $y_1=y_1'$. Hence $u<y_3<y_k<v$ is a direct congruence chain of $\bB_{\set{D_k}}$ for all $k\in\set{1,2}$. Thus the following equalities hold:
\begin{equation}\label{Eq1:MainTh}
\delta_{\set{C_1,D_2}}(y_3,y_2)=\Theta_{A_{\set{C_1,D_2}}} (x_3,x_2),
\end{equation}
\begin{equation}\label{Eq2:MainTh}
\delta_{\set{C_1,D_2}}(u,y_1) = \Theta_{A_{\set{C_1,D_2}}}(0,x_1).
\end{equation}

As $u<y_3<y_1<v$ and $u<y_3<y_2<v$ are chains of $\bB_{\set{C_1,D_2}}$, it follows that
\begin{align*}
\delta_{\set{C_1,D_2}}(y_3,y_1\wedge y_2)&=\delta_{\set{C_1,D_2}}(y_3\wedge y_2,y_1\wedge y_2)\\
&\subseteq\delta_{\set{C_1,D_2}}(y_3,y_1)\\
&=\delta_{\set{C_1,D_2}}(u\vee y_3,y_1\vee y_3)\\
&\subseteq\delta_{\set{C_1,D_2}}(u,y_1)
\end{align*}
and $\delta_{\set{C_1,D_2}}(y_3,y_1\wedge y_2)=\delta_{\set{C_1,D_2}}(y_1\wedge y_3,y_1\wedge y_2)\subseteq \delta_{\set{C_1,D_2}}(y_3, y_2)$. Therefore the following containments hold:
\begin{align*}
\delta_{\set{C_1,D_2}}(y_3,y_1\wedge y_2) &\subseteq \delta_{\set{C_1,D_2}}(u,y_1)\cap \delta_{\set{C_1,D_2}}(y_3,y_2)\\
&=\Theta_{A_{\set{C_1,D_2}}}(0,x_1)\cap \Theta_{A_{\set{C_1,D_2}}}(x_3,x_2) & & \text{by~\eqref{Eq1:MainTh} and~\eqref{Eq2:MainTh}}\\
&=\zero_{A_{\set{C_1,D_2}}}, & & \text{see Figure~\ref{F:lat-AQ}}.
\end{align*}
Thus $y_3=y_1\wedge y_2$ in~$B_\top$, so $h(x_1\wedge x_2)=h(x_3)=y_3=y_1\wedge y_2=h(x_1)\wedge h(x_2)$. Similarly if $x_1$ and $x_2$ are incomparable and $x_1\vee x_2\in K-\set{1}$ then $h(x_1\vee x_2)=h(x_1)\vee h(x_2)$. Hence $h$ is a morphism of partial lattices from~$K$ to~$B_\top$.

Let $x,y\in K-\set{0,1}$. Lemma~\ref{L:mainchaindia}(1) implies that:
\[
\delta_{\set{C_x,C_y}}(h(x),h(y))=\delta_{\set{C_x,C_y}}(t_x,t_y)=\Theta_{A_{\set{C_x,C_y}}}(x,y).
\]
Therefore $\delta_{\top}(h(x),h(y))=\Theta_{L}(x,y)$.

Let $x\in K$. {}From $\delta_{\set{C_x}}(u,t_x)=\Theta_{C_x}(0,x)$ it follows that $\delta_{\top}(h(0),h(x))=\Theta_{L}(0,x)$. A similar argument gives $\delta_{\top}(h(x),h(1))=\Theta_{L}(x,1)$. Moreover, $\delta_\emptyset(u,v)=\Theta_{A_\emptyset}(0,1)$, which implies in turn $\delta_{\top}(f(0),f(1))=\Theta_{L}(0,1)$.

Therefore $(h,\xi_{\top})\colon (K,K,\Theta_L,\Conc L)\to\bB_{\top}$ is an embedding of gamps.
\end{proof}

Gluing the chain diagram of a partial lattice~$K$ of~$L$ and the ``directing'' diagrams constructed in Lemma~\ref{L:directing}, we obtain a result similar to Theorem~\ref{T:good-liftingdia-imply-lifting}. We still need to assume the existence of enough congruence chains but these no longer need to be direct. As our directing diagrams ``force'' all congruence chains to be either direct or dually direct, our result is stated up to dualization.

\begin{lemma}\label{L:exists-good-diagram}
Let~$\cK$ be a class of bounded lattices closed under finite products and containing all bounded lattices generated by two elements. Assume that either~$M_3$ or~$N_5$ belongs to~$\cK$. Let $L\in\cK$, let~$K$ be a spanning finite partial sublattice of~$L$, such that $K$ has at least five elements. Put $\cC=\cC_K$. There exists a direct system $\vec A=\famm{A_P,f_{P,Q}}{P\le Q\text{ in } \IC(\cC)}$ of~$\cK$ \pup{with $0,1$-lattice homomorphisms} such that:
\begin{enumerate}
\item The lattice~$A_P$ is finite and distributive, for each $P<\top$ in $\IC(\cC)$.
\item If~$L$ is finite then $A_\top$ is finite, otherwise $\card A_\top=\card L$.
\item The equality $\vec A\res \JC(\cC)=\vec E(\cC)$ holds.
\item Let $\vec \bB=\famm{\bB_P,\bgg_{P,Q}}{P\le Q\text{ in }\IC(\cC)}$ be a partial lifting of $\Conc\circ\vec A$, let $u,v$ in~$B_{\emptyset}$. If~$\bB_{\set{C}}$ contains a congruence chain with extremities $g_{\emptyset,\set{C}}(u)$ and $g_{\emptyset,\set{C}}(v)$ for each~$C\in\cC$, then there exists a subgamp of a quotient of either~$\bB_\top$ or its dual isomorphic to~$(K,K,\Theta_L,\Conc L)$.
\end{enumerate}
\end{lemma}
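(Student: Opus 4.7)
The plan is to build $\vec A$ by combining the chain diagram $\vec A^0$ of $K$ in $L$ (Definition~\ref{D:chaindia}) with directing diagrams from Lemma~\ref{L:directing}. For each ordered triple $\sigma=(C_1,C_2,C_3)$ of distinct chains in $\cC_K$ with $C_1,C_2$ of length~$2$ and either $C_3$ of length~$2$ or $C_1,C_2\subseteq C_3$, I let $\vec A^\sigma$ be a directing diagram supplied by Lemma~\ref{L:directing} (using whichever of $M_3$, $N_5$ belongs to~$\cK$), extended to the index set $\IC(\cC_K)$ via Lemma~\ref{L:diagramextension}. Then I set $\vec A$ to be the product over $\JC(\cC_K)$ (Definition~\ref{D:productdiagram}) of $\vec A^0$ together with all of the $\vec A^\sigma$'s. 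Since $\cC_K$ is finite and $\cK$ is closed under finite products, $\vec A$ is a diagram in $\cK$.

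Properties~(1)--(3) follow from Remark~\ref{R:productdiagram}: for $P<\top$, every factor of $A_P$ is finite distributive (sublattices of $L$ generated by at most two spanning chains of length $\le 3$ in the $\vec A^0$-factor, and the explicit finite distributive lattices of Lemma~\ref{L:directing} in each $\vec A^\sigma$-factor), so $A_P$ is finite distributive; $A_\top$ is the product of $L$ with finitely many finite lattices, giving $\card A_\top=\card L$ when $L$ is infinite and finite otherwise; property~(3) holds by construction of the product.

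For property~(4), let $\vec\bB$, $\vec\xi$, $u$, $v$ be as in the hypothesis, and let $n\ge 3$ be the number of length-$2$ chains in $\cC_K$ (available because $\card K\ge 5$). Let $S$ be the set of length-$2$ chains $C\in\cC_K$ whose given congruence chain of $\bB_{\set{C}}$ is direct; by Remark~\ref{R:congruencechain} the alternative is that it is \emph{dually direct}, i.e.\ direct when viewed in $\dual{\bB}_{\set{C}}$. For each ordered triple $\sigma=(C_1,C_2,C_3)$ of distinct length-$2$ chains, the canonical projection $\vec A\to\vec A^\sigma$ makes $\vec\bB$ a partial lifting of $\Conc\circ\vec A^\sigma$, and Lemma~\ref{L:directing} yields $C_1,C_2\in S\Rightarrow C_3\in S$; hence $|S|=n$ or $|S|\le 1$. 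Running the same argument on $\dual{\vec\bB}$ (again a partial lifting of $\Conc\circ\dual{\vec A}=\Conc\circ\vec A$), and invoking the self-duality of $M_3$, $N_5$, the chains, and the $4$-element Boolean lattices $\set{0,x_i,x_j,1}$ appearing in Lemma~\ref{L:directing} so that each $\dual{\vec A^\sigma}$ is again a directing diagram of the same form, yields the dual implication ``two dually direct force the third dually direct'', whence $|S|=0$ or $|S|\ge n-1$. Since $n\ge 3$, only $|S|\in\set{0,n}$ survives.

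If $|S|=n$, then for each length-$3$ chain $D=\set{0,x,y,1}\in\cC_K$ its length-$2$ subchains $\set{0,x,1}$ and $\set{0,y,1}$ both lie in $S$, so Lemma~\ref{L:directing} applied via the projection $\vec A\to\vec A^{(\set{0,x,1},\set{0,y,1},D)}$ forces every congruence chain of $\bB_{\set{D}}$ with the prescribed extremities to be direct. Composing $\vec\xi$ with $\Conc\circ\vec\pi$, where $\vec\pi\colon\vec A\to\vec A^0$ is the canonical projection, produces an ideal-induced natural transformation $\CG\circ\vec\bB\to\Conc\circ\vec A^0$, and the induced quotient $\vec\bB'$ of $\vec\bB$ is a partial lifting of $\Conc\circ\vec A^0$ in which all congruence chains remain direct. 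Theorem~\ref{T:good-liftingdia-imply-lifting} then embeds $(K,K,\Theta_L,\Conc L)$ into $\bB'_\top$, a quotient of $\bB_\top$, giving the required subgamp. The case $|S|=0$ is symmetric, using $\dual{\vec\bB}$ throughout and producing a subgamp of a quotient of $\dual{\bB_\top}$.

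The main obstacle is the dual directing step: verifying carefully that $\dual{\vec A^\sigma}$ is again a diagram of the form produced by Lemma~\ref{L:directing}, so that the lemma applies verbatim to $\dual{\vec\bB}$. This reduces to the self-duality of all the ingredient lattices ($M_3$, $N_5$, chains, and four-element Boolean lattices) and to checking that dualizing the product over~$T$ of the $\vec A^t$'s yields such a product, after the permutation of $T$ induced by dualization.
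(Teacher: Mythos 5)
Your proposal is correct and follows essentially the same path as the paper's proof: the same product over $\JC(\cC_K)$ of the chain diagram $\vec A^0$ with directing diagrams extended via Lemma~\ref{L:diagramextension}, the same use of Lemma~\ref{L:directing} through the canonical projections and the kernel-ideal quotients, and the same final application of Theorem~\ref{T:good-liftingdia-imply-lifting} to the induced partial lifting of $\Conc\circ\vec A^0$.

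The one place you diverge is the combinatorial step, and there you create work for yourself that the paper avoids. The paper argues by pigeonhole: among the $n\ge 3$ length-$2$ chains, each given congruence chain is direct or dually direct (Remark~\ref{R:congruencechain}), so two have the same orientation; replacing $\vec\bB$ by $\dual{\vec\bB}$ if necessary, one may assume two are direct, after which only the forward implication (``$C_1,C_2$ direct $\Rightarrow$ every congruence chain of $\bB_{\set{C_3}}$ direct'') is ever needed. Your counting argument $\card S\in\set{0,n}$ additionally requires the dual implication, and that is what leads to your flagged ``main obstacle'' of verifying that each $\dual{\vec A^\sigma}$ is again a directing diagram. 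That obstacle is illusory: you never need to dualize the diagrams, only the lifting. Dualizing a gamp leaves $\widetilde B$ and $\delta_{\bB}$ untouched, so $\dual{\vec\bB}$ is a partial lifting of the \emph{same} $\Conc\circ\vec A$ (this fact is also what the paper uses implicitly, and what makes its ``up to changing $\vec\bB$ to its dual'' legitimate); its kernel-ideal quotient is then a partial lifting of the same $\Conc\circ\vec A^\sigma$, and Lemma~\ref{L:directing}(2) applies verbatim: direct congruence chains in $\dual{\bB}_{\set{C_1}}$ and $\dual{\bB}_{\set{C_2}}$ force every congruence chain of $\dual{\bB}_{\set{C_3}}$ to be direct, i.e., every congruence chain of $\bB_{\set{C_3}}$ to be dually direct. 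So no self-duality of $M_3$, $N_5$, of the four-element lattices, or of the product over $T$ enters anywhere, and your argument closes without the verification you postponed; alternatively, adopting the pigeonhole shortcut removes the need for the dual implication altogether.
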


\begin{proof}
The $\cC$-chain diagram~$\vec A^0$ of~$L$ is a diagram in~$\cK$. 

Let~$C_1,C_2,D\in\cC$ be pairwise distinct chains such that~$C_1$ and~$C_2$ both have length~$2$, and either $D$ has length~$2$ or~$C_1,C_2\subseteq D$.

As $M_3\in\cK$ or $N_5\in\cK$, it follows from Lemma~\ref{L:directing} that there exists a diagram $\vec H^{C_1,C_2,D}$ of finite lattices of~$\cK$ indexed by $\IC(\set{C_1,C_2,D})$ such that:
\begin{enumerate}
\item The equality $\vec H^{C_1,C_2,D}\res \JC(\set{C_1,C_2,D})=\vec E(\set{C_1,C_2,D})$ holds.
\item Let $\vec\bB=\famm{\bB_P,\bgg_{P,Q}}{P\le Q\text{ in }\IC(\set{C_1,C_2,D})}$ be a partial lifting, let $\vec\xi\colon\CG\circ\vec\bB\to\Conc\circ\vec H^{C_1,C_2,D}$ be a natural equivalence, let $u,v$ in~$B_{\emptyset}$. Assume that $\bB_{\set{C_k}}$ contains a direct congruence chain with extremities $g_{\emptyset,\set{C_k}}(u)$ and $g_{\emptyset,\set{C_k}}(v)$ for $k\in\set{1,2}$. If $\bB_{\set{D}}$ contains a congruence chain with extremities $g_{\emptyset,\set{D}}(u)$ and $g_{\emptyset,\set{D}}(v)$ then it is also direct.
\end{enumerate}
It follows from Lemma~\ref{L:diagramextension} that there exists a direct system $\vec A^{C_1,C_2,D}$ of finite lattices in~$\cK$ indexed by $\IC(\cC)$, such that:
\begin{align*}
\vec A^{C_1,C_2,D}\res \IC(\set{C_1,C_2,D})&=\vec H^{C_1,C_2,D},\\
\vec A^{C_1,C_2,D}\res\JC(\cC)&=\vec E(\cC).
\end{align*}
Let $T$ be the set with elements~$0$ and $(C_1,C_2,D)$ where~$C_1,C_2,D\in\cC$ are pairwise distinct and either $D$ has length~$2$ or~$C_1,C_2\subseteq D$.
Let $\vec A=\famm{A_P,f_{P,Q}}{P\le Q\text{ in }\IC(\cC)}$ be the product of $(\vec A^t)_{t\in T}$ over $\JC(\cC)$ (cf. Definition~\ref{D:productdiagram}) and let $\vec\pi^t=(\pi_P^t)_{P\in \IC(\cC)}\colon \vec A\to  \vec A^t$ be the canonical projection for each $t\in T$ (see Definition~\ref{D:productdiagram}).

It follows from Remark~\ref{R:productdiagram} that~$\vec A$ is a diagram in~$\cK$ and the lattice~$A_P$ is finite for each $P<\top$ in $\IC(\cC)$. The lattice~$A_\top$ is a finite product of finite lattices and~$L$, so $(2)$ holds. {}From the definition of the product over $\JC(\cC)$ (cf. Definition~\ref{D:productdiagram}), it follows that $\vec A\res \JC(\cC)=\vec A^0\res \JC(\cC)=\vec E(\cC)$.

Given $t\in T$, we put $\vec I^t=\ker_0\vec\pi^t$, that is
\[
I_P^t=\ker_0\GA(\pi_P^t)=\ker_0(\Conc\pi_P^t)=(\Conc A_P)\dnw\ker\pi_P^t,\quad\text{for each $P\in\IC(\cC)$.}
\]
Let $\vec\chi^t\colon (\Conc \circ\vec A)/\vec I^t\to\Conc\circ\vec A^t$ induced by $\Conc\circ\vec\pi^t$. It follows from \cite[Lemma~3.13]{G3} that $\vec\chi^t$ is a natural equivalence. Moreover $I_P^t=\set{0}$,~$A_P=A_P^t$, and $\chi_P^t=\id_{\Conc A_P}$ for each $P\in\JC(\cC)$ and each $t\in T$.

Let $\vec\bB=\famm{\bB_P,\bgg_{P,Q}}{P\le Q\text{ in }\IC(\cC)}$ be a partial lifting of $\Conc\circ\vec A$. We can assume that $\CG\circ\vec\bB=\Conc\circ\vec A$. Let $u,v\in B_{\emptyset}$. Suppose that $\bB_{\set{C}}$ contains a congruence chain with extremities $g_{\emptyset,\set{C}}(u)$ and $g_{\emptyset,\set{C}}(v)$ for each~$C\in\cC$. In the rest of our proof, all congruence chains of~$\bB_{\set{C}}$ will have extremities $g_{\emptyset,\set{C}}(u)$ and $g_{\emptyset,\set{C}}(v)$.

\begin{sclaim}
Let~$C_1,C_2\in\cC$ be distinct chains of length~$2$, let $D\in\cC-\set{C_1,C_2}$ such that either $D$ has length~$2$ or~$C_1,C_2\subseteq D$. Assume that $\bB_{\set{C_k}}$ contains a direct congruence chain for $k=1,2$. Then $\bB_{\set{D}}$ contains a direct congruence chain.
\end{sclaim}

\begin{scproof}
Put $t=(C_1,C_2,D)$. The restriction $(\vec\bB/\vec I^t)\res\IC(\set{C_1,C_2,D})$ is a partial lifting of $\Conc\circ\vec H^t$ induced by the restriction of $\vec\chi^t$. As $\chi_P^t=\id_{\Conc A_P}$ and $I_P^t=\set{0}$, it follows that the direct congruence chains of $\bB_P/I_P^t$ and $\bB_P$ are the same for both structures for each $P\in \set{\set{C_1},\set{C_2},\set{D}}$. It follows from~$(2)$ that every congruence chain of~$\bB_{\set{D}}$ is direct.
\end{scproof}

Let~$C$ be a chain of length~$2$. A congruence chain of $\bB_{\set C}$ is either direct or dually direct (cf. Remark~\ref{R:congruencechain}). As $K$ has at least five elements, there are at least three chains of length~$2$ in $\cC$. Therefore, up to changing~$\vec\bB$ to its dual, we can take~$C_1,C_2\in\cC$ distinct of length~$2$, such that $\bB_{\set{C_k}}$ has a direct congruence chain. Let $D\in\IC(\cC)-\set{C_1,C_2}$ be a chain of length~$2$. It follows from the Claim that all congruence chains of $\bB_{\set D}$ are direct. Therefore $\bB_{\set D}$ has a direct congruence chain for every $D\in\IC(\cC)$ of length~$2$.

Let $D=\set{0,x_1,x_2,1}\in\IC(\cC)$ be a chain of length~$3$, put~$C_k=\set{0,x_k,1}$ for $k=1,2$. As $\bB_{\set{C_k}}$ has a direct congruence chain for $k=1,2$, it follows from the Claim that $\bB_{\set D}$ has a direct congruence chain.

Thus, applying Theorem~\ref{T:good-liftingdia-imply-lifting}, to $\vec\bB/\vec I^0$ which is a partial lifting of $\Conc\circ\vec A^0$, we obtain a subgamp of a~$\bB_\top/I^0_\top$ isomorphic to~$(K,K,\Theta_L,\Conc L)$.
\end{proof}

The following lemma gives a way to find a partial sublattice of some lattice in a variety but not in another.

\begin{lemma}\label{L:exists-good-partial-sublattice}
Let~$\cV$ be a variety of bounded lattices, let~$\cW$ be variety of lattices. If $\cV\not\subseteq\cW$ and $\cV\not\subseteq\dual{\cW}$ then there are a countable bounded lattice $L\in\cV$ and a finite spanning partial sublattice~$K$ of~$L$ such that~$K$ is not a partial sublattice of any lattice of $\cW\cup\dual{\cW}$.
\end{lemma}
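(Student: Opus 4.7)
The plan is to convert each non-containment into a finite partial sublattice that traps a failure of some identity, and then to glue these two obstructions together inside a direct product.

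First, by Birkhoff's theorem, $\cV\not\subseteq\cW$ yields a lattice identity $p_1(\vec x)\approx q_1(\vec x)$ valid throughout $\cW$ but failing on some tuple $\vec a$ in some $L_1\in\cV$, so $p_1(\vec a)\neq q_1(\vec a)$ in $L_1$. Replacing $L_1$ by the sub-bounded-lattice it generates together with the entries of $\vec a$ keeps $L_1$ in $\cV$ and makes it countable (every finitely generated bounded lattice being countable). Let $K_1\subseteq L_1$ be the finite set consisting of $0_{L_1}$, $1_{L_1}$, and the value $r(\vec a)\in L_1$ for every subterm $r$ of $p_1$ or $q_1$. Equip $K_1$ with the partial operations dictated by the subterm trees: for each subterm of the form $r\vee s$, put $(r(\vec a),s(\vec a))\in\Def_\vee$ with value $(r\vee s)(\vec a)$, and symmetrically for $\wedge$, plus the trivial operations involving the bounds. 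This makes $K_1$ a finite partial sublattice of $L_1$, and any partial-lattice embedding $\varphi\colon K_1\hookrightarrow M$ into a lattice $M$ satisfies $\varphi(r(\vec a))=r(\varphi(\vec a))$ for every subterm $r$ by induction on its depth; injectivity then forces $p_1(\varphi(\vec a))\neq q_1(\varphi(\vec a))$ in $M$, whence $M\notin\cW$. Symmetrically, $\cV\not\subseteq\dual{\cW}$ furnishes a countable bounded $L_2\in\cV$ and an analogous finite partial sublattice $K_2\subseteq L_2$ such that no lattice of $\dual{\cW}$ admits $K_2$ as a partial sublattice.

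Now form $L=L_1\times L_2\in\cV$, a countable bounded lattice, and set
\[
K=(K_1\times\{0_{L_2}\})\cup(\{0_{L_1}\}\times K_2)\cup\{(1_{L_1},1_{L_2})\},
\]
with partial operations transported from $K_1$ and $K_2$ along the bounds-preserving lattice embeddings $x\mapsto(x,0_{L_2})$ and $y\mapsto(0_{L_1},y)$, together with the required operations involving $0_L=(0_{L_1},0_{L_2})$ and $1_L=(1_{L_1},1_{L_2})$. Because those embeddings are lattice morphisms into $L$, the declared partial operations agree with the lattice operations of $L$, so $K$ is indeed a finite spanning partial sublattice of $L$. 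If $K$ embedded as a partial lattice into some $M\in\cW\cup\dual{\cW}$, the restriction of the embedding to $K_1\times\{0_{L_2}\}\cong K_1$ (respectively $\{0_{L_1}\}\times K_2\cong K_2$) would be a partial-lattice embedding of $K_1$ (respectively $K_2$) into $M$, forcing both $M\notin\cW$ and $M\notin\dual{\cW}$, \contr.

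The only genuinely delicate point is making sure that on $K$ we declare enough partial operations for it to be a \emph{partial sublattice of $L$} (so that Lemma~\ref{L:exists-good-partial-sublattice} is stated about a legitimate object), yet few enough that the two slot-copies of $K_1$ and $K_2$ survive intact under any hypothetical embedding. Both requirements are satisfied by the construction above: partial-lattice embeddings restrict to partial sublattices, so the slot-copies transmit the identity failures; and declaring only the subterm operations within each slot, plus operations involving $0_L,1_L$, never clashes with $L$ since joins and meets in a single slot $K_i\times\{0\}$ or $\{0\}\times K_i$ reduce componentwise to operations in the corresponding $L_i$.
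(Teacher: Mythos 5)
Your proposal is correct and is essentially the paper's own proof: Birkhoff's theorem converts each non-containment into an identity valid in $\cW$ (resp.\ $\dual{\cW}$) but failing in $\cV$, and a finite spanning partial sublattice built from the subterm values of the witnessing tuples traps those failures, which then transfer to any lattice containing $K$ as a partial sublattice. The only differences are cosmetic: the paper simply takes a single countable bounded $L\in\cV$ failing both identities (your product $L_1\times L_2$ is exactly such a lattice) and one $K$ containing both sets of witnesses, so the two-slot gluing is unnecessary; also, your maps $x\mapsto(x,0_{L_2})$ and $y\mapsto(0_{L_1},y)$ are lattice embeddings but not bounds-preserving (they do not preserve $1$), a mislabel that your argument never actually uses.
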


\begin{proof}
Assume that $\cV\not\subseteq\cW$ and $\cV\not\subseteq\dual{\cW}$. Let $t_1=t_2$ be an identity satisfied in~$\cW$ but not satisfied in~$\cV$, let $t_1'=t_2'$ be an identity satisfied in~$\dual{\cW}$ but not in~$\cV$. Let $L\in\cV$ be a countable bounded lattice that fails both $t_1=t_2$ and $t'_1=t'_2$, let~$K$ be a finite spanning partial sublattice of~$L$ which fails both $t_1=t_2$ and $t'_1=t'_2$ (cf. \cite[Definition 4.8]{G3}). As~$K$ does not satisfy $t_1=t_2$, it is not a partial sublattice of any lattice of~$\cW$. Similarly~$K$ it is not a partial sublattice of any lattice of $\dual{\cW}$.
\end{proof}

Lemma~\ref{L:exists-good-partial-sublattice} and Lemma~\ref{L:exists-good-diagram} are the mains tools used in this paper to construct a diagram liftable in a variety and not in another one.

\section{A larger diagram}\label{S:ALargerDiag}

The aim of this section is, given a diagram~$\vec A$ indexed by a poset $I$, to construct a new diagram $\vec A'$ (cf. Lemma~\ref{L:complexdiagram}) indexed by a larger poset~$J$ (cf. \cite[Definition~8.6]{G3}), such that the existence of a partial lifting of $\Conc\circ\vec A'$ in some ``good'' variety implies the existence of a partial lifting of $\Conc\circ\vec A$ with many congruence chains (cf. Lemma~\ref{L:complextosimplewithchain}).

The proof of the following lemma is straightforward. We refer to \cite[Definition~8.6, Remark~8.7]{G3} for the definition of $\kposet{I}{X}{\vec P}{\alpha}$ and its \emph{associated tree}.

\begin{lemma}\label{L:complexdiagram}
Let~$\cK$ be a class of bounded lattices, let~$I$ be a poset with a smallest element~$0$, let $\vec A=(A_i,f_{i,j})_{i\le j\text{ in }I}$ be a direct system of~$\cK$ such that~$A_0=\two$, let $X\subseteq I-\set{0}$ such that~$A_x$ is a finite chain of length at least~$2$, for each $x\in X$, and let $\alpha\le\omega$ be an ordinal. Put
 \[
 P_x=\setm{p\colon A_x\tosurj \two}{\text{$p$ is isotone and preserves bounds}},
 \quad\text{for each }x\in X.
 \]
Set $\vec P=(P_x)_{x\in X}$ and $J=\kposet{I}{X}{\vec P}{\alpha}$, and denote by $T$ its associated tree. Put:
\[
A'_{(t,i)}=A_i,\quad\text{for each $(t,i)\in J$.}
\]

Let $t=(n,\vec x,\vec p)\in T$, let $\ba=(t\res m,i)\le\bb=(t,j)$ in $J$. If $m=n$ we put $f'_{\ba,\bb}=f_{i,j}$. If $m<n$ we put $f'_{\ba,\bb}=f_{0,j}\circ p_{m}\circ f_{i,x_{m}}$.
Then $\vec A'=\famm{A'_{\ba},f'_{\ba,\bb}}{\ba\le\bb\text{ in }J}$ is a $J$-indexed diagram in~$\cK$.
\end{lemma}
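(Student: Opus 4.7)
The plan is a direct verification that $\vec A'$ is a well-defined $J$-indexed diagram in $\cK$, with the only real content being a bookkeeping check of the composition identities. Two small structural facts drive everything:

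\textbf{(a)} Each $p\in P_x$ is a bounded lattice homomorphism. Indeed, $A_x$ and $\two$ are both chains, so any isotone map between them preserves binary meets and joins; together with preservation of bounds, this makes $p$ a $0,1$-homomorphism. Consequently every $f'_{\ba,\bb}$, being a composite of such maps together with transition maps of $\vec A$, is a morphism of $\cK$.

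\textbf{(b)} For every $x\in X$ and every $p\in P_x$, the composite $p\circ f_{0,x}\colon\two\to\two$ equals $\id_{\two}$. This is because $A_0=\two$ forces $f_{0,x}$ to send $0\mapsto 0$ and $1\mapsto 1$, and $p$ also preserves bounds.

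The substantive step is the functoriality check: for $\ba\le\bb\le\bc$ in $J$, one must show $f'_{\ba,\bc}=f'_{\bb,\bc}\circ f'_{\ba,\bb}$. Writing $\bc=(t,k)$ with $t$ of length $n$, both $\ba$ and $\bb$ have first component of the form $t\res\ell$; so set $\ba=(t\res l,i)$ and $\bb=(t\res m,j)$ with $l\le m\le n$. There are four subcases, according to whether $l=m$ or $l<m$, and whether $m=n$ or $m<n$. The three subcases $l=m=n$, $l=m<n$, and $l<m=n$ each reduce immediately to the functoriality of $\vec A$, using either $f_{j,x_m}\circ f_{i,j}=f_{i,x_m}$ or $f_{j,k}\circ f_{0,j}=f_{0,k}$ to collapse the composite to the expected form.

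The only interesting subcase is $l<m<n$. Expanding the definitions and applying functoriality of $\vec A$ to $f_{j,x_m}\circ f_{0,j}=f_{0,x_m}$ yields
\[
f'_{\bb,\bc}\circ f'_{\ba,\bb}=f_{0,k}\circ p_m\circ f_{j,x_m}\circ f_{0,j}\circ p_l\circ f_{i,x_l}=f_{0,k}\circ (p_m\circ f_{0,x_m})\circ p_l\circ f_{i,x_l}.
\]
Fact \textbf{(b)} collapses the middle composite to $\id_{\two}$, leaving $f_{0,k}\circ p_l\circ f_{i,x_l}=f'_{\ba,\bc}$, as required. Finally, $f'_{\ba,\ba}=\id$ follows from the $m=n$ clause applied with $i=j$. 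The main (and only) obstacle is therefore keeping the four subcases of the composition identity straight; once fact \textbf{(b)} is isolated, each reduces to a one-line computation.
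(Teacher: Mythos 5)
Your proof is correct; the paper itself gives no argument for this lemma, declaring it straightforward, and your direct verification---splitting the composition identity $f'_{\ba,\bc}=f'_{\bb,\bc}\circ f'_{\ba,\bb}$ into the four subcases on $l\le m\le n$ and reducing each to functoriality of $\vec A$ together with the key observation that $p\circ f_{0,x}=\id_{\two}$ for every $p\in P_x$---is exactly the intended check. Your preliminary fact (a), that isotone bound-preserving maps between chains are automatically $0,1$-lattice homomorphisms, correctly disposes of the claim that the $f'_{\ba,\bb}$ are morphisms of~$\cK$, so nothing is missing.
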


\begin{lemma}\label{L:complextosimplewithchain}
We use the notation of Lemma~\textup{\ref{L:complexdiagram}}, with $\alpha=\omega$. Let~$\cV$ be a variety of lattices such that every simple lattice in~$\cV$ contains a prime interval. Let $\famm{\bB_{\ba},\bgg_{\ba,\bb}}{\ba\le \bb\text{ in }J}$ be a partial lifting of~$\Conc\circ\vec A'$ such that $\bB_{\ba}$ is finite for each $\ba\in J^=$. Then there are $t\in T$ and $u,v$ in~$B_{(t,0)}$ such that for each $x\in X$, the gamp $\bB_{(t,x)}$ has a congruence chain with extremities $g_{(t,0),(t,x)}(u)$ and $g_{(t,0),(t,x)}(v)$.
\end{lemma}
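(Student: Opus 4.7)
The plan is to build the required tree node $t$ by an inductive construction that handles elements of $X$ one at a time as we climb the tree $T$. Since $T$ has depth $\omega$, we have unbounded depth to work with, and the construction will terminate at some finite level because at each step we only need to handle finitely many atoms of congruence lattices. Throughout, I will keep a single ``candidate'' pair $(u,v)$ at the root of the tree, and only the tree node itself will vary.

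First, I fix the base pair at the root $r \in T$. Since $\widetilde{B_{(r, 0)}} \cong \Conc A_0 = \Conc \two$ is the two-element lattice, I choose $u \neq v$ in $B^*_{(r, 0)}$ with $\delta_{(r, 0)}(u, v)$ maximal. Naturality of the partial lifting, together with the observation that every composition $f'_{(r, 0),\,(t, x)}$ for $t \in T$ above $r$ reduces to the bounds-preserving map $f_{0, x}\colon \two \to A_x$ (because each intermediate projection $p \in P_{x_m}$ satisfies $p \circ f_{0, x_m} = \id_{\two}$), implies $\delta_{(t, x)}(g(u), g(v)) = 1_{\widetilde{B_{(t, x)}}}$ for every $t \geq r$ in $T$ and every $x \in X$. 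Thus the extremities always generate the top congruence in the fiber, which is a necessary condition for a congruence chain, and gives us a canonical candidate pair to use throughout the rest of the argument.

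The inductive step is where the variety hypothesis on simple lattices enters. For each $x \in X$, the gamp $\bB_{(t, x)}$ is finite, $\widetilde{B_{(t, x)}} \cong \Conc A_x$ is Boolean with atoms in bijection with the covering pairs of $A_x$, and $B^*_{(t, x)}$ is a finite lattice in $\cV$. To construct a congruence chain from $g(u)$ to $g(v)$, I peel off atoms one at a time: given an atom $\alpha \in \At \widetilde{B_{(t, x)}}$, the assumption that every simple lattice in $\cV$ contains a prime interval guarantees that some simple subquotient of $B^*_{(t, x)}$ realizing $\alpha$ admits a covering pair, and this pulls back to a two-element chain in $B^*_{(t, x)}$ whose $\delta$-distance is exactly $\alpha$. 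Inserting these pairs successively into the interval $[g(u), g(v)]$, and using Lemma~\ref{L:presqueordre} to control how the distances add along a chain, builds the full congruence chain of length $\lh(A_x)$.

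The main obstacle is making these individual constructions compatible across different $x \in X$ with the \emph{same} extremities $g(u), g(v)$. This is precisely what the tree $T$ is designed to provide: if at some node $t$ the finitary construction fails for some $x \in X$ because the required covering pairs do not line up inside $[g(u), g(v)]$ in $B^*_{(t, x)}$, I pass to a child $t' = t \frown (x, p)$ with $p \in P_x$ chosen to force resolution of one more atom of $\Conc A_x$ in the enlarged gamp $\bB_{(t', x)}$, exploiting the fact that the transition map $f'_{(t,x),(t',x)}$ factors through $\two$ via $p$ and hence genuinely refines the structure over $x$. Because $X$ is processed sequentially, each $x$ requires at most $|\At \Conc A_x|$ such refinements, and passing to a descendant does not destroy progress already made at earlier $x_k$'s (the fiber transition maps send congruence chains to congruence chains by naturality of $\delta$ and by Lemma~\ref{L:presqueordre}); the induction therefore terminates at a finite tree node $t$ satisfying the conclusion.
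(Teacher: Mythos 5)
There is a genuine gap, and it is located exactly where your proof claims to use the hypothesis on~$\cV$. The assumption ``every simple lattice in~$\cV$ contains a prime interval'' is \emph{vacuous} for finite structures: every finite lattice (and every simple subquotient of one) has covering pairs, so invoking it on the finite gamps $\bB_{(t,x)}$, $x\in X$, gives you nothing. In particular it cannot produce, inside the interval $[g(u),g(v)]$ of the partial lattice $B^*_{(t,x)}$, a two-element chain whose $\delta$-distance is \emph{exactly} a prescribed atom~$\alpha$, nicely interleaved with the pairs already inserted. What a partial lifting actually provides at a finite stage (via the morphisms being congruence-cuttable with chains) is a chain from $g(u)$ to $g(v)$ whose consecutive distances are atoms; the obstruction to this being a congruence chain is not missing covers but a \emph{repeated} atom, and no finite-stage argument can remove the repetition. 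This is why the paper's proof is by contradiction: assuming every node of~$T$ fails, it uses a repeated atom $\alpha=\delta_{\by}(u_k,u_{k+1})=\delta_{\by}(u_{k'},u_{k'+1})$ to choose $p_n\in P_{x_n}$ with $(\Conc p_n)(\alpha)=\one$, climbs one level of the tree so that a new element lands strictly between the images of a covering pair, iterates this along an infinite branch, and only then applies the hypothesis on~$\cV$ --- to the \emph{direct limit} of that branch (via \cite[Lemma~6.17]{G3}), which is a simple lattice in~$\cV$ with no prime interval, a contradiction. The hypothesis can only act on an infinite object; your proposal never produces one.

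Your compatibility and termination claims also fail for a structural reason. By the construction of~$\vec A'$ in Lemma~\ref{L:complexdiagram}, every transition map between distinct tree levels factors through $\two$ (namely $f'_{\ba,\bb}=f_{0,j}\circ p_m\circ f_{i,x_m}$), so $\Conc$ of such a map kills all atoms except the one selected by $p_m$. Consequently the fiber transition maps do \emph{not} send congruence chains to congruence chains --- they collapse them --- so ``progress already made at earlier $x_k$'s'' is destroyed, not preserved, when you pass to a child node. The bound ``at most $\card(\At\Conc A_x)$ refinements per $x$'' therefore has no justification, and indeed cannot have one: if finitely many levels of~$T$ sufficed, the lemma would hold with $\alpha$ finite, which (as the remark following Theorem~\ref{T:majorcritpoint} indicates) is only possible under the much stronger hypothesis that simple lattices of the variety have bounded length. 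The classical example recalled in the introduction --- the three-element chain is the congruence lattice of a modular lattice but of no finite modular lattice --- is precisely the phenomenon your finite-termination argument overlooks.
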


\begin{proof}
Denote by $T$ the tree associated to $\kposet{I}{X}{\vec P}{\alpha}$. Let $t=(n,\vec x,\vec p)\in T$; then $J_t=\setm{(t,i)}{i\in I}$ is a subposet of~$J$. Moreover, the assignment $I\to J_t$, $i\mapsto(t,i)$ is an isomorphism, and it induces an isomorphism of the diagrams $\vec A'\res J_t$ and~$\vec A$.

Assume that for all $t\in T$ and for every chain $u<v$ of $\bB_{(t,0)}$, there exists $x\in X$ such that $\bB_{(t,x)}$ has no congruence chain with extremities $g_{(t,0),(t,x)}(u)$ and $g_{(t,0),(t,x)}(v)$.

Our aim is to construct a sequence $\vec x=(x_k)_{k<\omega}$ of $X$, $\vec p\in\prod_{k<\omega}P_{x_k}$, and $\ba_n=(n,\vec x\res n,\vec p\res n,0)$ for each $n<\omega$, such that for each $m<\omega$ and for each chain $u<v$ in $\bB_{\ba_m}$, there are $n>m$ and $z\in B_{\ba_n}^*$ such that $g_{\ba_m,\ba_n}(u)<z<g_{\ba_m,\ba_n}(v)$ is a chain of $\bB_{\ba_n}$.

We can assume that $\CG\circ\vec B=\Conc\circ \vec A'$ (cf. \cite[Remark~6.15]{G3}). We set $\delta_{\ba}=\delta_{\bB_{\ba}}$, for each $\ba\in J$.

Assume having already constructed $\vec x\in X^n$ and $\vec p\in P_{x_0}\times\dots\times P_{x_{n-1}}$ for some $n<\omega$. Set $\ba_{m}=(m,\vec x\res m,\vec p\res m,0)$ for each $m\le n$. As~$A_{\ba_{n}}'=A_0=\two$ there exists a non-zero congruence of~$A_{\ba_{n}}'$. Moreover, $\bB_{\ba_n}$ is distance-generated with chains, thus there exist a chain $u<v$ of $\bB_{\ba_{n}}$. Using the finiteness of $\bB_{\ba_{n}}$ we can construct a covering $u'\prec v'$ in $\bB_{\ba_{n}}$.

Let $m\le n$ minimal such that there exist $u\prec v$ in $\bB_{\ba_m}$ with $g_{\ba_m,\ba_{n}}(u)\prec g_{\ba_m,\ba_{n}}(v)$. Let $u,v$ be such elements, thus $u\not=v$. Note that $\Conc f_{\ba_m,\ba_{n}}$ is the identity map, hence it separates zero; it follows from \cite[Proposition 5.10(2)]{G3} that $g_{\ba_m,\ba_{n}}(u)\not=g_{\ba_m,\ba_{n}}(v)$. Put $u'=g_{\ba_m,\ba_{n}}(u)$, and $v'=g_{\ba_m,\ba_n}(v)$. So $\delta_{\ba_n}(u',v')\not=\zero_{A_{\ba_n}}$, however~$A_{\ba_n}=\two$, therefore $\delta_{\ba_n}(u',v')=\one_{A_{\ba_n}}$.

Put $t=(n,\vec x,\vec p)$. Let $y\in X$ such that $\bB_{(t,y)}$ has no congruence chain with extremities $g_{\ba_n,(t,y)}(u')$ and $g_{\ba_n,(t,y)}(v')$. Set $\by=(t,y)$. Put $S=\At\Conc A_{\by}$, we recall that~$A_{\by}=A_y$ is a chain. So the following equalities hold:
\[
\delta_{\by}(g_{\ba_n,\by}(u'),g_{\ba_n,\by}(v'))=(\Conc f_{0,y})(\delta_{\ba_n}(u',v'))=(\Conc f_{0,y})(\one_{A_0})=\one_{A_y}=\bigvee S.
\]

As $\bgg_{\ba_n,\by}$ is congruence-cuttable with chains, there exists a chain $g_{\ba_n,\by}(u')=u_0<u_1<\dots <u_{\ell}=g_{\ba_n,\by}(v')$ of $\bB_{\by}$ such that for each $k<\ell$ there exists $\alpha\in S$ such that $\delta_{\by}(u_k,u_{k+1})\le \alpha$; moreover, as $\alpha$ is an atom, $\delta_{\by}(u_k,u_{k+1})=\alpha$.

Let $\alpha\in S$. If there does not exist $k<\ell$ such that $\alpha=\delta_{\by}(u_k,u_{k+1})$, then:
\[
\bigvee(S-\set{\alpha})\ge\bigvee_{k<\ell}\delta_{\by}(u_k,u_{k+1})\ge\delta_{\by}(u_0,u_{\ell})=\one_{A_y}\ge\alpha,
\]
a contradiction as~$S$ is the set of atoms of the Boolean lattice $\Conc A_y$. Hence for each $\alpha\in S$ there exists $k<\ell$ such that $\alpha=\delta_{\by}(u_k,u_{k+1})$. However, $g_{\ba_n,\by}(u')=u_0<u_1<\dots<u_{\ell}=g_{\ba_n,\by}(v')$ is not a congruence chain of~$\bB_{\by}$, so there are $k<k'<\ell$ and $\alpha\in S$ such that $\alpha=\delta_{\by}(u_k,u_{k+1})=\delta_{\by}(u_{k'},u_{k'+1})$. Let $p_{n}\colon A_y\tosurj A_0=\two$ be isotone such that $(\Conc p_{n})(\alpha)=\one_{A_0}$.

Put $x_{n}=y$, $\ba_{n+1}=(n+1,(\vec x,x_n),(\vec p,p_n),0)$, $u''=g_{\ba_m,\ba_{n+1}}(u)$, $v''=g_{\ba_m,\ba_{n+1}}(v)$, and $z=g_{\by,\ba_{n+1}}(u_{k+1})$. Notice that $u''=g_{\ba_m,\ba_{n+1}}(u)=g_{\by,\ba_{n+1}}(u_0)$,  $f_{\by,\ba_{n+1}}'=p_{n+1}$, and $\delta_{\ba_{n+1}}(u_0,u_{k+1})\ge\alpha$, thus
\begin{align*}
\delta_{\ba_{n+1}}(u'',z)&=\delta_{\ba_{n+1}}(g_{\by,\ba_{n+1}}(u_0),g_{\by,\ba_{n+1}}(u_{k+1}))\\
&= (\Conc f_{\by,\ba_{n+1}}')(\delta_{\ba_{n+1}}(u_0,u_{k+1}))\\
&\ge(\Conc p_{n+1})(\alpha)\\
&=\one_{A_0}.
\end{align*}
Hence $z\not=u''$. Similarly $z\not=v''$, so $u''=g_{\ba_m,\ba_{n+1}}(u)<z<g_{\ba_m,\ba_{n+1}}(v)=v''$ is a chain of $\bB_{\ba_{n+1}}$.

Arguing by induction, we thus construct a subdiagram
 \[
 \famm{\bB_{\ba_m},\bgg_{\ba_m,\ba_{n}}}{m\le n<\omega}
 \]
of~$\vec\bB$. It follows from the construction that for all $m<\omega$ and all $u\prec v$ in $\bB_{\ba_m}$ there are $n>m$ and a chain $g_{\ba_m,\ba_n}(u)<z<g_{\ba_m,\ba_n}(v)$ of $\bB_{\ba_n}$.

Let $\vec D=(\two,\id_\two)_{m\le n<\omega}$. It is easy to check that $\famm{\bB_{\ba_m},\bgg_{\ba_m,\ba_{n}}}{m\le n<\omega}$ is a partial lifting of $\vec D$. Hence it follows from \cite[Lemma~6.17]{G3} that
 \[
 \bB=\varinjlim \famm{\bB_{\ba_m},\bgg_{\ba_m,\ba_{n}}}{m\le n<\omega}
 \]
is a lattice in~$\cV$ and $\Conc B\cong \two$, that is, $B$ is simple. By construction, there does not exist $u\prec v$ in~$B$; a contradiction.
\end{proof}

\begin{theorem}\label{T:premajorcritpoint}
Let~$\cK$ be a class of bounded lattices closed under finite products and directed colimits such that either $M_3\in\cK$ or $N_5\in\cK$ and every bounded lattice generated by two elements belongs to~$\cK$. Let $L\in\cK$ and let~$K$ be a finite partial sublattice of~$L$. There exists a lattice $A\in\cK$ such that the following statements hold:
\begin{itemize}
\item $\card A=\aleph_2+\card L$.
\item Let~$B$ be a lattice such that every simple lattice in $\Var B$ contains a prime interval. If $\Conc B\cong\Conc A$, then $K$ is a partial sublattice of a quotient of either~$B$ or its dual.
\end{itemize}
\end{theorem}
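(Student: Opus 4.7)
The plan is to combine Lemmas~\ref{L:exists-good-diagram}, \ref{L:complexdiagram} and~\ref{L:complextosimplewithchain} with the condensate construction of \cite[Theorem~9.3]{G3}. I first perform a routine enlargement of $K$: adjoining a few elements of $L$ (including $0$ and $1$), I may assume that $K$ is a spanning finite partial sublattice of $L$ with at least five elements, since the conclusion for the enlarged $K$ trivially implies it for the original $K$. Lemma~\ref{L:exists-good-diagram} then produces a diagram $\vec A$ in $\cK$ indexed by $I = \IC(\cC_K)$, finite and distributive off the top, with $A_\emptyset = \two$ and $\card A_\top = \card L$, and with the property that any partial lifting of $\Conc \circ \vec A$ admitting congruence chains with matching extremities at each $\bB_{\set C}$ embeds $K$ as a partial sublattice of a quotient of $\bB_\top$ or its dual.

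Next I apply Lemma~\ref{L:complexdiagram} to $\vec A$, taking $X = \setm{\set C}{C \in \cC_K}$, $\alpha = \omega$, and $P_x$ the set of bound-preserving isotone surjections from the finite chain $A_x$ onto $\two$. This yields a diagram $\vec A'$ in $\cK$ indexed by $J = \kposet{I}{X}{\vec P}{\omega}$, finite at every non-top node, whose restriction to each branch $J_t = \setm{(t,i)}{i \in I}$ of the associated tree $T$ is a copy of $\vec A$.

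I then feed $\vec A'$ into the condensate construction of \cite[Theorem~9.3]{G3} to produce a lattice $A \in \cK$ of cardinality $\aleph_2 + \card L$ such that, for any lattice $B$ with $\Conc B \cong \Conc A$, there is a partial lifting $\vec\bB = \famm{\bB_\ba,\bgg_{\ba,\bb}}{\ba \le \bb \text{ in } J}$ of $\Conc \circ \vec A'$ in $\Var B$ with $\bB_\ba$ finite for each $\ba \in J^=$ and whose top components arise as quotients of $B$ (up to dualization). If every simple lattice of $\Var B$ contains a prime interval, Lemma~\ref{L:complextosimplewithchain} applied to $\vec\bB$ produces $t \in T$ and $u,v \in B_{(t,0)}$ such that each $\bB_{(t,x)}$ has a congruence chain with extremities $g_{(t,0),(t,x)}(u)$ and $g_{(t,0),(t,x)}(v)$. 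Identifying $J_t$ with $I$ via $i \mapsto (t,i)$, the restriction of $\vec\bB$ to $J_t$ is a partial lifting of $\Conc \circ \vec A$ satisfying the hypotheses of Lemma~\ref{L:exists-good-diagram}; that lemma then embeds $K$ into a quotient of $\bB_{(t,\top)}$ or its dual, which by the construction of the condensate is a quotient of $B$ or $\dual{B}$.

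The hard part will be the third step: applying \cite[Theorem~9.3]{G3} with the right pregamp structure on $\vec A'$, checking that the resulting condensate lies in $\cK$ (using closure under finite products and directed colimits), and controlling its cardinality. The size bookkeeping is natural but must be checked: only $A_\top$ contributes $\card L$ while every other node of $\vec A'$ is finite and the index poset $J$ has cardinality $\aleph_2$, giving the bound $\aleph_2 + \card L$. One must also ensure that the top gamp of the extracted partial lifting arises as a quotient of $B$ itself and not merely some lattice in $\Var B$—this is precisely the structural strength of \cite[Theorem~9.3]{G3} and the whole reason for working with gamps rather than genuine lattices throughout Sections~\ref{S:ChainDiag} and~\ref{S:ALargerDiag}.
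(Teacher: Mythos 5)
Your proposal follows the paper's own proof essentially step for step: Lemma~\ref{L:exists-good-diagram} applied to (a spanning enlargement of) $K$, then Lemma~\ref{L:complexdiagram} with $\alpha=\omega$, then the condensate $A=\xF(U)\otimes\vec A'$ built from an $\aleph_0$-lifter of cardinality $\aleph_2$ together with \cite[Theorem~9.3]{G3}, and finally Lemma~\ref{L:complextosimplewithchain} to extract a branch of the tree carrying congruence chains, concluding via Lemma~\ref{L:exists-good-diagram}(4) and the fact that the top gamp of that branch is a quotient of $\GA(B)$. The only notable difference is that you make explicit the preliminary enlargement of $K$ to a spanning partial sublattice with at least five elements containing $0$ and $1$, a hypothesis of Lemma~\ref{L:exists-good-diagram} that the paper's proof leaves implicit.
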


\begin{proof}
Denote $\cC=\cC_K$. It follows from Lemma~\ref{L:exists-good-diagram} that there exists a direct system $\vec A=(A_P,f_{P,Q})_{P\le Q\text{ in } \IC(\cC)}$ of~$\cK$ (with $0,1$-lattice homomorphisms) such that:
\begin{enumerate}
\item The lattice~$A_P$ is finite, for each $P<\top$ in $\IC(\cC)$.
\item $\card A_\top\le\card L+\aleph_0$.
\item The equality $\vec A\res \JC(\cC)=\vec E(\cC)$ holds.
\item Let $\famm{\bB_P,\bgg_{P,Q}}{P\le Q\text{ in }\IC(\cC)}$ be a partial lifting of $\Conc\circ\vec A$, let $u,v$ in~$B_{\emptyset}$. If $\bB_{\set{C}}$ contains a congruence chain with extremities $g_{\emptyset,\set{C}}(u)$ and $g_{\emptyset,\set{C}}(v)$ for each~$C\in\cC$, then there exists a one-to-one morphism of partial lattices from~$K$ to a quotient of either~$B_\top$ or its dual.
\end{enumerate}
Put $I=\IC(\cC)$, set $X=\setm{\set{C}}{C\in\cC}$, put $P_{\set C}=\setm{p\colon C\tosurj \two}{p\text{ is isotone}}$, for each $C\in\cC$, as in Lemma~\ref{L:complexdiagram}. Put $J=\kposet{I}{X}{\vec P}{\omega}$. Let~$\vec A'$ as in Lemma~\ref{L:complexdiagram}. Then~$A'_{\ba}$ is finite for each $\ba\in J^=$ and~$\card A'_{\ba}\le\card L+\aleph_0$ for each $\ba\in \Max J$ (note that the maximal elements of $J$ are the elements of the form $(t,\top)$, where $t\in T$).

The definition of $\aleph_0$-lifter appears in \cite{GiWe1}, see also \cite[Definition~8.3]{G3}. It follows from \cite[Corollary~8.10]{G3} that there exists an $\aleph_0$-lifter $(U,\bU)$ of $J$ such that $\card U=\aleph_2$. Put~$A=\xF(U)\otimes\vec A'$ (cf. \cite[Definition 3.1-5]{GiWe1}, or \cite[Remark~9.2]{G3} for a short description). The following inequality holds:
\[
\card A\le\card U+\sum_{j\in J}\card A'_{j}=\aleph_2+\card L.
\]
Let~$B$ be a lattice such that $\Conc B\cong \Conc A=\Conc\bigl(\xF(U)\otimes\vec A'\bigr)$ and every simple lattice in $\cW=\Var B$ contains a prime interval. Thus it follows from \cite[Theorem~9.3 and Remark~9.4]{G3} that there exists $\vec\bB=\famm{\bB_{\ba},\bgg_{\ba,\bb}}{\ba\le\bb\text{ in }J}$ a (lattice) partial lifting of $\Conc\circ\vec A'$, such that $\bB_{\ba}$ is finite for each $\ba\in J^=$, and $\bB_{\ba}$ is a quotient of $\GA(B)$ for each $\ba\in \Max J$.

Now Lemma~\ref{L:complextosimplewithchain} implies that there exists $(\bb_i)_{i\in I}$ in~$J$ such that the diagram $\famm{\bB_{\bb_i},\bgg_{\bb_i,\bb_j}}{i\le j\text{ in }I}$ is a partial lifting of $\Conc\circ\vec A$, and there exists a chain $u<v$ in $\bB_{\bb_0}$ such that $\bB_{\bb_x}$ has a congruence chain with extremities $g_{\bb_0,\bb_x}(u)$ and $g_{\bb_0,\bb_x}(v)$ for each $x\in X$. So it follows from $(4)$ that~$K$ is a partial sublattice of a quotient of either~$\bB_{\bb_\top}$ or its dual. Notice that $\bb_{\top}$ is maximal in $J$, so $\bB_{\bb_\top}$ is a quotient of $\GA(B)$.
\end{proof}

\begin{corollary}
Let~$\cV$ be a variety of bounded lattices and let~$\cW$ be a variety of lattices such that every simple lattice in~$\cW$ contains a prime interval. Let $n\ge 4$, let~$L$ be a congruence $n$-permutable lattice in~$\cV$. If $L\not\in\cW\cup\dual{\cW}$, then there exists a congruence $n$-permutable lattice $A\in\cV$ such that $\Conc A\not\cong\Conc B$ for any $B\in\cW$.
\end{corollary}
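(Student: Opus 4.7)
The strategy is to reduce the corollary to Theorem~\ref{T:premajorcritpoint} applied to a suitably chosen subclass~$\cK$ of~$\cV$ consisting only of congruence $n$-permutable bounded lattices.

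First, since $L\in\cV$ but $L\notin\cW\cup\dual{\cW}$, the variety $\Var L$ satisfies $\Var L\not\subseteq\cW$ and $\Var L\not\subseteq\dual{\cW}$. Apply Lemma~\ref{L:exists-good-partial-sublattice} to the pair $(\Var L,\cW)$ to produce a countable bounded lattice $L_0\in\Var L$ together with a finite spanning partial sublattice $K$ of $L_0$ that is not a partial sublattice of any lattice of $\cW\cup\dual{\cW}$. Because $L$ is congruence $n$-permutable, the variety $\Var L$ inherits the corresponding Hagemann--Mitschke Mal'cev terms, and so every member of $\Var L$ (in particular $L_0$) is congruence $n$-permutable.

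Second, let $\cK$ denote the class of congruence $n$-permutable bounded lattices in $\cV$. Congruence $n$-permutability is preserved under homomorphic images, finite products, and directed colimits, so $\cK$ is closed under finite products and directed colimits. Since $L\notin\cW$ and every variety of lattices contains $\two$, the lattice $L$ must be nondistributive; by the usual characterization, $\Var L$ then contains $M_3$ or $N_5$. Both are simple and hence trivially congruence $n$-permutable, so at least one of them is in $\cK$. Every bounded lattice generated by two elements is a quotient of the finite lattice of Figure~\ref{F:lat-AQ}, lies in $\cV$, and (being finite with a small, distributive congruence lattice) is congruence $n$-permutable for $n\ge 4$, so it too belongs to $\cK$. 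Thus $\cK$ satisfies the hypotheses of Theorem~\ref{T:premajorcritpoint}.

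Third, applying Theorem~\ref{T:premajorcritpoint} to $\cK$, $L_0$, and $K$ yields a lattice $A\in\cK$ (hence $A\in\cV$, congruence $n$-permutable) with the property that for any lattice $B$ satisfying that every simple lattice of $\Var B$ contains a prime interval, $\Conc B\cong\Conc A$ forces $K$ to be a partial sublattice of a quotient of $B$ or of $\dual{B}$. Now suppose, towards a contradiction, that $B\in\cW$ satisfies $\Conc B\cong\Conc A$. Then $\Var B\subseteq\cW$, so every simple lattice of $\Var B$ has a prime interval by the hypothesis on~$\cW$. The theorem then embeds $K$ as a partial sublattice of a quotient of $B\in\cW$ or of $\dual{B}\in\dual{\cW}$, both of which lie in $\cW\cup\dual{\cW}$, contradicting the choice of~$K$. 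The main obstacle is verifying that $\cK$ is genuinely closed under the required operations while containing both the ``building blocks'' ($M_3$ or $N_5$, and the two-generated bounded lattices) demanded by Theorem~\ref{T:premajorcritpoint}; this is where the hypothesis $n\ge 4$ enters, guaranteeing that small finite lattices with distributive congruence lattices automatically satisfy the required Mal'cev-type condition.
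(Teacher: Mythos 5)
There is a genuine gap, and it is the step your whole reduction rests on. You claim that ``because $L$ is congruence $n$-permutable, the variety $\Var L$ inherits the corresponding Hagemann--Mitschke Mal'cev terms, and so every member of $\Var L$ (in particular $L_0$) is congruence $n$-permutable.'' This is false: Mal'cev-type conditions such as the Hagemann--Mitschke terms characterize congruence $n$-permutability of a \emph{variety}, and congruence $n$-permutability of a single algebra does not yield such terms. In fact no nontrivial variety of lattices is congruence $n$-permutable for any $n$: since $L$ is nontrivial, $\Var L$ contains all distributive lattices, hence all finite chains, and a chain of length $n+1$ fails $n$-permutability (take $\alpha$ and $\beta$ collapsing the odd-indexed and even-indexed prime intervals respectively; then $\alpha\vee\beta=\one$ but the bottom and top are not related by the $n$-fold alternating composition). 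A concrete counterexample to your claim: $M_3$ is simple, hence congruence permutable, yet $\Var M_3$ contains arbitrarily long chains. Consequently the lattice $L_0$ produced by Lemma~\ref{L:exists-good-partial-sublattice} need not be congruence $n$-permutable, so $L_0$ need not lie in your class $\cK$, and the application of Theorem~\ref{T:premajorcritpoint} to $(\cK,L_0,K)$ is unjustified --- that theorem requires its ambient lattice to belong to $\cK$.

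The paper avoids this by never leaving $L$: instead of invoking Lemma~\ref{L:exists-good-partial-sublattice} as a black box (which only returns \emph{some} lattice of $\Var L$), it re-runs its proof inside the given $L$ --- since $L\notin\cW\cup\dual{\cW}$, the lattice $L$ itself fails an identity of $\cW$ and one of $\dual{\cW}$, and the finitely many elements witnessing both failures generate a finite spanning partial sublattice $K$ of $L$ not embeddable in any member of $\cW\cup\dual{\cW}$. It then replaces $L$ by a \emph{countable congruence $n$-permutable sublattice of $L$ containing $K$}; note this also needs care, since $n$-permutability does not pass to arbitrary sublattices, so this sublattice must be built by a closure (L\"owenheim--Skolem-type) argument rather than just taking the sublattice generated by $K$. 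From that point on your argument agrees with the paper's: the class $\cK$ of congruence $n$-permutable (bounded) lattices in $\cV$, its closure under finite products and directed colimits, the membership of $M_3$ or $N_5$ and of the two-generated bounded lattices (this is where $n\ge4$ enters), and the final contradiction via Theorem~\ref{T:premajorcritpoint}. One further small point: your derivation that $L$ is nondistributive tacitly assumes $\cW$ is nontrivial; the paper disposes of the trivial-$\cW$ case separately (there the conclusion holds with $A=L$).
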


\begin{proof}
With a proof similar to the one of Lemma~\ref{L:exists-good-partial-sublattice}, we can find a finite spanning partial sublattice $K$ of~$L$ such that $K$ does not embed into any lattice of $\cW\cup\dual{\cW}$. Changing~$L$ to one of its congruence $n$-permutable sublattices that contains $K$, we can assume that~$L$ is countable.

Denote by~$\cK$ the class of all congruence $n$-permutable lattices in~$\cV$. Notice that if~$\cV$ contains neither $M_3$ nor $N_5$, then~$\cV$ is distributive, so the only possibility for~$\cW$ is the trivial variety and the result holds in that case. Thus we can assume that either $M_3\in\cK$ or $N_5\in\cK$. Moreover, as $n\ge 4$, every bounded lattice generated by two elements is congruence $n$-permutable and so it belongs to~$\cK$. Theorem~\ref{T:premajorcritpoint} implies that there exists $A\in\cK$ such that $\card A=\aleph_2$ and $\Conc A$ has no lifting in~$\cW$.
\end{proof}

\begin{theorem}\label{T:majorcritpoint}
Let~$\cV$ be a variety of bounded lattices and let~$\cW$ be a variety of lattices. If every simple lattice in~$\cW$ contains a prime interval, then one of the following statements holds:
\begin{enumerate}
\item $\crit{\cV}{\cW}\le\aleph_2$.
\item $\cV\subseteq\cW$.
\item $\cV\subseteq\dual{\cW}$.
\end{enumerate}
\end{theorem}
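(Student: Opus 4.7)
My plan is this: the theorem is where all the preceding machinery pays off, so I expect the proof to be a short assembly with one mildly delicate special case. I would argue by contradiction on~(2) and~(3), assuming $\cV\not\subseteq\cW$ and $\cV\not\subseteq\dual{\cW}$, and produce a witness of cardinality~$\aleph_2$ in $\Conc\cV-\Conc\cW$.

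The first thing I would check is the degenerate case in which $\cV$ contains neither $M_3$ nor $N_5$, since this is excluded from the hypotheses of Theorem~\ref{T:premajorcritpoint}. By the classical characterization of distributivity by forbidden sublattices, $\cV$ is then distributive. A short case analysis closes this off: if $\cV$ is trivial then~(2) is automatic; if $\cW$ is trivial then $\Conc\two\in\Conc\cV-\Conc\cW$ yields $\crit{\cV}{\cW}\le 2$; otherwise $\cW$ contains $\two$ and hence the entire variety of distributive lattices, which forces~(2).

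In the main case, I would proceed as follows. Apply Lemma~\ref{L:exists-good-partial-sublattice} to obtain a countable bounded lattice $L\in\cV$ together with a finite spanning partial sublattice $K$ of~$L$ not embeddable as a partial sublattice in any member of $\cW\cup\dual{\cW}$. Take $\cK=\cV$ viewed as a class of bounded lattices: as a variety of bounded lattices, it is closed under finite products and directed colimits, contains $M_3$ or $N_5$ by the case assumption, and contains every two-generated bounded lattice. The hypotheses of Theorem~\ref{T:premajorcritpoint} are thus met, yielding a lattice $A\in\cV$ with $\card A=\aleph_2+\card L=\aleph_2$ and the stated embedding property attached to~$K$.

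To conclude, suppose for contradiction that $\Conc A\cong\Conc B$ for some $B\in\cW$. Since $\Var B\subseteq\cW$, every simple lattice in $\Var B$ contains a prime interval, so Theorem~\ref{T:premajorcritpoint} applies and forces $K$ to embed as a partial sublattice of a quotient of either $B$ or~$\dual{B}$. The former lies in~$\cW$ and the latter in~$\dual{\cW}$, contradicting the choice of~$K$. Thus $\Conc A\in\Conc\cV-\Conc\cW$ and $\crit{\cV}{\cW}\le\aleph_2$. The step I would worry about most is the distributive degenerate case, since that is the only part not directly bundled into the preceding theorem; everything else is bookkeeping on top of the earlier constructions.
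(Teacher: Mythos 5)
Your proposal is correct and follows essentially the same route as the paper's proof: dispose of the case where $\cV$ contains neither $M_3$ nor $N_5$ (the paper dismisses this as trivial, which you spell out correctly), then combine Lemma~\ref{L:exists-good-partial-sublattice} with Theorem~\ref{T:premajorcritpoint} applied to $\cK=\cV$ to produce the $\aleph_2$-sized witness $A$, deriving the contradiction from the fact that a quotient of $B$ or $\dual{B}$ lies in $\cW\cup\dual{\cW}$. The extra detail you supply (the distributive degenerate case and the verification of the hypotheses of Theorem~\ref{T:premajorcritpoint}) is exactly what the paper leaves implicit.
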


\begin{proof}
We can assume that $M_3\in\cV$ or $N_5\in\cV$ (the result is trivial otherwise). Assume that $\cV\not\subseteq\cW$ and $\cV\not\subseteq\dual{\cW}$. It follows from Lemma~\ref{L:exists-good-partial-sublattice} that there are a countable lattice $L\in\cV$ and a finite spanning partial sublattice~$K$ of~$L$ such that~$K$ is not a partial sublattice of any lattice of $\cW\cup\dual{\cW}$. Let $A$ be the lattice constructed in Theorem~\ref{T:premajorcritpoint}, so $\card A=\aleph_2$ and $\Conc A$ cannot have a lifting in~$\cW$, hence $\crit{\cV}{\cW}\le\aleph_2$.
\end{proof}

\begin{remark}
In the proof of Theorem~\ref{T:majorcritpoint}, if $\cV\not\subseteq\cW$ and $\cV\not\subseteq\dual{\cW}$, we construct a diagram $\vec A'$ in~$\cV$ indexed by a poset~$J$, such that $\Conc\circ\vec A'$ has no partial lifting in~$\cW$, in particular it has no lifting in~$\cW$. Moreover the poset $I$ is lower finite and~$A'_i$ is finite for each $i\in I^=$ and countable otherwise.

If we assume that simple lattices of~$\cW$ have length bounded by some integer~$\alpha$, then we can ``cut'' the diagram $\vec A'$, taking $J=\kposet{I}{X}{\vec P}{\alpha}$. Thus we obtain a finite diagram $\vec A'$ such that $\Conc\circ\vec A'$ has no lifting in~$\cW$. If we also assume that~$\cV$ is finitely generated, then $\vec A'$ is a finite diagram of finite lattices, because we can choose the partial lattice~$K$ as a sublattice of a finite lattice~$L$ in Lemma~\ref{L:exists-good-partial-sublattice}.
\end{remark}

This partially answers \cite[Problem 5]{CLPSurv}, in particular for finitely generated varieties of lattices.

\begin{corollary}
Let~$\cV$ and~$\cW$ be varieties of lattices such that every simple lattice in~$\cW$ contains a prime interval \pup{this holds, in particular, in case~$\cW$ is finitely generated}. Then either $\crit{\cV}{\cW}\le\aleph_2$ or $\Conc\cV\subseteq\Conc\cW$.
\end{corollary}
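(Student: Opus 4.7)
The plan is to derive the corollary from Theorem~\ref{T:majorcritpoint} by reducing the case of a general lattice variety~$\cV$ to that of a bounded-lattice variety. I would assume $\Conc\cV\not\subseteq\Conc\cW$ and aim to show $\crit{\cV}{\cW}\le\aleph_2$. The first step is to observe that $\Conc L=\Conc\dual L$ for every lattice~$L$, whence $\cV\subseteq\dual{\cW}$ would force $\Conc\cV\subseteq\Conc\cW$; combined with the trivial implication $\cV\subseteq\cW\Rightarrow\Conc\cV\subseteq\Conc\cW$, our hypothesis delivers both $\cV\not\subseteq\cW$ and $\cV\not\subseteq\dual{\cW}$.

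Next I would introduce~$\cV_b$, the variety of bounded lattices (in the signature $\{\vee,\wedge,0,1\}$) defined by the lattice identities of~$\cV$. Its members are exactly the bounded lattices whose lattice reducts belong to~$\cV$. Since every failure of a lattice identity in some $L\in\cV$ is witnessed by finitely many elements, and hence by the finite sublattice they generate, which is automatically bounded and lies in~$\cV_b$, the non-inclusions $\cV\not\subseteq\cW$ and $\cV\not\subseteq\dual{\cW}$ transfer to $\cV_b\not\subseteq\cW$ and $\cV_b\not\subseteq\dual{\cW}$.

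At this point I can apply Theorem~\ref{T:majorcritpoint} to the pair $(\cV_b,\cW)$; cases~(2) and~(3) are ruled out, so the theorem yields $\crit{\cV_b}{\cW}\le\aleph_2$. Because the congruences of a bounded lattice coincide with those of its lattice reduct, $\Conc\cV_b\subseteq\Conc\cV$, so any semilattice of cardinality at most~$\aleph_2$ witnessing $\crit{\cV_b}{\cW}\le\aleph_2$ also witnesses $\crit{\cV}{\cW}\le\aleph_2$. The only point that needs any care is the (routine) transfer of identity failures between $\cV$ and~$\cV_b$; there is no serious obstacle. The parenthetical assertion that the prime-interval hypothesis holds whenever~$\cW$ is finitely generated follows from J\'onsson's theorem: the simple members of a finitely generated lattice variety are finite, and every non-trivial finite lattice contains a prime interval.
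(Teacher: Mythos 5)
Your route is the one the paper intends (the corollary is stated without proof, as an immediate consequence of Theorem~\ref{T:majorcritpoint}): assume $\Conc\cV\not\subseteq\Conc\cW$, use $\Conc L=\Conc\dual{L}$ to rule out both $\cV\subseteq\cW$ and $\cV\subseteq\dual{\cW}$, pass to the bounded-lattice variety~$\cV_b$ so that Theorem~\ref{T:majorcritpoint} applies, and pull the witness back along $\Conc\cV_b\subseteq\Conc\cV$ (congruences ignore the constants $0,1$). The duality observation, the cardinality bookkeeping, and the J\'onsson-Lemma justification of the parenthetical claim about finitely generated~$\cW$ are all correct.

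There is, however, one false claim in your transfer step: you assert that finitely many elements of $L\in\cV$ witnessing the failure of an identity of~$\cW$ generate a \emph{finite} sublattice. Lattices are not locally finite---the free lattice on three generators is already infinite---so this sublattice may be infinite. Fortunately your step needs only boundedness, not finiteness, and boundedness is automatic: in a lattice generated by $a_1,\dots,a_n$, every element is a lattice term $t(a_1,\dots,a_n)$ and hence satisfies $a_1\wedge\dots\wedge a_n\le t(a_1,\dots,a_n)\le a_1\vee\dots\vee a_n$, so the meet and the join of the generators are a least and a largest element. Thus the finitely generated sublattice, with its bounds named, lies in~$\cV_b$ and still fails the identity of~$\cW$ (resp.\ of~$\dual{\cW}$), and the rest of your argument goes through unchanged. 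This pitfall is precisely why the paper itself works with finite \emph{partial} sublattices (as in Lemma~\ref{L:exists-good-partial-sublattice}) rather than with generated sublattices; with the one-line fix above, your proof is complete.
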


\begin{proof}
Denote by $\cV^{0,1}$ the variety of all bounded lattices in $\cV$. If $\crit{\cV^{0,1}}{\cW}\le\aleph_2$, then $\crit{\cV}{\cW}\le\aleph_2$. Otherwise it follows from Theorem~\ref{T:majorcritpoint} that either $\cV^{0,1}\subseteq\cW$, or $\cV^{0,1}\subseteq\dual{\cW}$. Changing $\cW$ to its dual, we can assume that $\cV^{0,1}\subseteq\cW$.

Let $L\in\cV$, we consider $L'=L\sqcup\set{0,1}$ where~$0$ is the smallest element of $L'$ and~$1$ is the largest element of~$L'$. It is well-known that $L'$ satisfies the same identities as~$L$, moreover $L'\in\cV^{0,1}\subseteq\cW$, however $L\subseteq L'$, therefore $L\in\cW$.
\end{proof}

This also solves \cite[Problem 6]{CLPSurv}. Notice that this question is misformulated: indeed, denoting by $K$ the chain of length~$2$ and by~$L$ the chain of length~$3$, then $\Conc(\Var K)=\Conc(\Var L)$, but $K\not\cong L$ and $K\not\cong \dual{L}$. However, the following statement holds.

\begin{corollary}\label{C:solvepb6}
Let~$K$ and~$L$ be finite subdirectly irreducible lattices. If $\Conc(\Var K)=\Conc(\Var L)$, then either $K\cong L$ or $K\cong\dual{L}$.
\end{corollary}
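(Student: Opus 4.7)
The plan is to invoke Theorem~\ref{T:majorcritpoint} in both directions and then to conclude via J\'onsson's lemma. Since $K$ and $L$ are finite, the varieties $\Var K$ and $\Var L$ are finitely generated. In any finitely generated lattice variety, every subdirectly irreducible (in particular every simple) member is finite by J\'onsson's lemma for congruence-distributive varieties, and a finite simple lattice necessarily contains a prime interval. Regarding $K$ and $L$ as bounded lattices (automatic since they are finite), I would apply Theorem~\ref{T:majorcritpoint} first with source $\Var K$ and target $\Var L$, then with the roles swapped.

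Since $\Conc(\Var K) = \Conc(\Var L)$, the critical points $\crit{\Var K}{\Var L}$ and $\crit{\Var L}{\Var K}$ are both infinite, so alternative~(1) of Theorem~\ref{T:majorcritpoint} fails in both directions. This yields, on the one hand, $K \in \Var L$ or $K \in \dual{\Var L} = \Var{\dual L}$, and on the other hand, $L \in \Var K$ or $L \in \dual{\Var K} = \Var{\dual K}$. In every one of the four resulting combinations, J\'onsson's lemma places $K$ in $\mathrm{HS}(M)$ for some $M \in \set{L, \dual L}$, giving $\card K \leq \card M = \card L$; symmetrically, $\card L \leq \card K$, so $\card K = \card L$. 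Then $K$ is a surjective homomorphic image of a sublattice of $M$ of cardinality $\card M = \card K$, which forces that sublattice to be $M$ itself and the surjection to be bijective; hence $K \cong M$, that is, $K \cong L$ or $K \cong \dual L$.

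The main obstacle is the first step: Theorem~\ref{T:majorcritpoint} is stated with a bounded-lattice variety on the source side, so one must check that treating the finite lattices $K$ and $L$ as bounded causes no loss in applying the trichotomy to the equality $\Conc(\Var K) = \Conc(\Var L)$. The remainder is a bookkeeping-plus-cardinality argument, resting on the standard fact that a surjection between finite lattices of equal cardinality is an isomorphism.
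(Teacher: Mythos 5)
Your proposal is correct and follows essentially the same route as the paper: the paper likewise applies Theorem~\ref{T:majorcritpoint} in both directions (concluding that, up to replacing $L$ by $\dual{L}$, $\Var K=\Var L$), then invokes J\'onsson's Lemma to get each of $K$, $L$ as a quotient of a sublattice of the other, and finishes by finiteness. The bounded-source issue you flag is harmless: the variety of bounded lattices generated by $K$ has its compact congruence class contained in $\Conc(\Var K)$, because congruences are unaffected by the nullary bound operations, so the equality $\Conc(\Var K)=\Conc(\Var L)$ still forces alternative~(1) of the trichotomy to fail.
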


\begin{proof}
It follows from Theorem~\ref{T:majorcritpoint} that, up to changing~$L$ to its dual, $\Var K=\Var L$. As $K$ and~$L$ are both finite and subdirectly irreducible, it follows from J\'onsson's Lemma that $K$ is a quotient of a sublattice of~$L$ and that~$L$ is a quotient of a sublattice of $K$. Therefore we can conclude $K\cong L$.
\end{proof}

\begin{corollary}
Let~$\cV$ and~$\cW$ be varieties of lattices such that every simple lattice in~$\cW$ contains a prime interval. If $\Conc\cV\subseteq\Conc\cW$ then either $\cV\subseteq\cW$ or $\cV\subseteq\dual{\cW}$.
\end{corollary}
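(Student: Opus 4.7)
The plan is to reduce the corollary to Theorem~\ref{T:majorcritpoint} via a companion variety of bounded lattices. Let $\cV^b$ denote the class of all bounded lattices $(A,\vee,\wedge,0,1)$ whose lattice reducts $(A,\vee,\wedge)$ lie in~$\cV$. Since $\cV$ is $H, S, P$-closed as a class of lattices, $\cV^b$ is $H, S, P$-closed as a class of bounded lattices, so $\cV^b$ is a variety of bounded lattices.

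Because a congruence of $(A,\vee,\wedge,0,1)$ is exactly the same as a congruence of its lattice reduct, the inclusion $\Conc\cV^b \subseteq \Conc\cV$ holds; combined with the hypothesis $\Conc\cV \subseteq \Conc\cW$ this gives $\crit{\cV^b}{\cW} = \infty$. Theorem~\ref{T:majorcritpoint} applied to $(\cV^b,\cW)$ therefore yields $\cV^b \subseteq \cW$ or $\cV^b \subseteq \dual{\cW}$; dualizing if necessary, I would assume $\cV^b \subseteq \cW$.

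The remaining and key step is to promote $\cV^b \subseteq \cW$ to $\cV \subseteq \cW$. Fix $L \in \cV$. For any finite tuple $\vec a = (a_1,\dots,a_n) \in L^n$, a routine induction on lattice terms shows that every element of the sublattice $L_0 \subseteq L$ generated by~$\vec a$ lies in the interval $[a_1 \wedge \dots \wedge a_n,\, a_1 \vee \dots \vee a_n]$ of~$L$. Thus $L_0$ has a largest and a smallest element and, viewed as a bounded lattice with these bounds, its lattice reduct still lies in~$\cV$; hence $L_0 \in \cV^b \subseteq \cW$. Now any failure $t_1(\vec a) \ne t_2(\vec a)$ of an identity $t_1 = t_2$ of~$\cW$ in~$L$ would already occur in the corresponding~$L_0$, contradicting $L_0 \in \cW$; hence~$L$ satisfies every identity of~$\cW$, so $L \in \cW$. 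The main obstacle is precisely this passage between $\cV$ and $\cV^b$: everything hinges on the observation that finitely generated sublattices of members of~$\cV$ are automatically bounded and therefore belong to~$\cV^b$, after which the result follows from Theorem~\ref{T:majorcritpoint} by pure identity-chasing.
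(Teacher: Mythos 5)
Your proof is correct and follows the paper's (implicit) route: the paper states this corollary without proof, as an immediate consequence of Theorem~\ref{T:majorcritpoint}, and your argument is exactly that deduction. Your companion variety $\cV^b$ together with the finitely-generated-sublattice (identity-chasing) step correctly supplies the bounded-versus-unbounded bridge that the paper leaves tacit—necessary because Theorem~\ref{T:majorcritpoint} is stated for varieties of \emph{bounded} lattices while the corollary concerns varieties of lattices—so it is the same approach with a genuine gap in the paper's exposition properly filled.
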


Baker proves in \cite{B} that there exist continuum many varieties of locally finite modular lattices. By using products of projective planes of different characteristics, we obtain the corollary below.

\begin{corollary}
There are continuum many congruence classes of locally finite varieties of modular lattices.
\end{corollary}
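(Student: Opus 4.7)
For each prime $p$, let $L_p$ denote the subspace lattice of the Desarguesian projective plane $\mathrm{PG}(2,p)$: a finite simple modular lattice of length~$3$, self-dual via the polarity exchanging points and lines. For each subset $S$ of the set of primes, set $\cV_S=\Var\setm{L_p}{p\in S}$. The plan is to exhibit an injection $S\mapsto\Conc\cV_S$ from the power set of the primes into the class of compact congruence classes of locally finite modular varieties. By Baker~\cite{B}, each $\cV_S$ is locally finite, and $S\mapsto\cV_S$ is itself one-to-one: via the von Neumann coordinatisation, the characteristic~$p$ is detected by a lattice identity $\sigma_p$ that holds in every $L_q$ with $q\neq p$ but fails in~$L_p$, so $L_p\in\cV_{S'}$ forces $p\in S'$.

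To promote this into an injection on congruence classes I would apply the preceding corollary with $\cW=\cV_{S'}$. Its hypothesis~---~every simple lattice in $\cV_{S'}$ contains a prime interval~---~follows from J\'onsson's Lemma: each subdirectly irreducible member of $\cV_{S'}$ lies in $\mathbf{HSP}_U(\setm{L_q}{q\in S'})$, and since ``length at most~$3$'' is a first-order property preserved by $\mathbf{H}$, $\mathbf{S}$, and $\mathbf{P}_U$, every such member is a modular lattice of length at most~$3$, which trivially contains a prime interval. Now given $S\neq S'$, swap $S$ and $S'$ if necessary to pick $p\in S\setminus S'$. Self-duality of~$L_p$ yields $\dual{\cV_{S'}}=\cV_{S'}$; combined with $L_p\in\cV_S\setminus\cV_{S'}$ this gives $\cV_S\not\subseteq\cV_{S'}$ and $\cV_S\not\subseteq\dual{\cV_{S'}}$, whence the preceding corollary forces $\Conc\cV_S\not\subseteq\Conc\cV_{S'}$, and in particular $\Conc\cV_S\neq\Conc\cV_{S'}$. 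The $2^{\aleph_0}$ subsets of the primes therefore produce $2^{\aleph_0}$ pairwise distinct compact congruence classes of locally finite modular varieties, as required.

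The main obstacle is the lattice-theoretic separation of characteristics, i.e.\ the production of the identity $\sigma_p$~---~but this is precisely the input supplied by Baker's construction \cite{B} of the continuum-family of locally finite modular varieties, so for our purposes it may be invoked as a black box. Once this input and the self-duality of projective planes are both in place, the collapse $\dual{\cV_{S'}}=\cV_{S'}$ and the preceding corollary handle the remainder of the argument routinely.
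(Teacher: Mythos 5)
Your overall strategy is exactly the paper's: take Baker's continuum-sized family of modular varieties indexed by sets of primes, and separate their compact congruence classes with the preceding corollary, using self-duality of the Desarguesian planes to kill the dual alternative and J\'onsson's Lemma (length $\le 3$ passes to homomorphic images, sublattices and ultraproducts) to verify the prime-interval hypothesis. Those parts of your argument are correct, and the separation identities $\sigma_p$ may indeed be treated as Baker's black box.

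The genuine gap is the sentence ``By Baker~\cite{B}, each $\cV_S$ is locally finite.'' That is not what Baker proves (his theorem is the pairwise distinctness of these varieties), and for infinite $S$ it is false. A nonprincipal ultraproduct $\prod_{p\in S}L_p/U$ is the subspace lattice of a projective plane over a field $F$ of characteristic $0$, so $\Sub(\mathbb{Q}^3)$ embeds into it and hence lies in $\cV_S$. By the classical von Staudt coordinatization argument (the same one showing that the free modular lattice on four generators is infinite), $\Sub(\mathbb{Q}^3)$ is generated, as a lattice, by any four points in general position; thus $\cV_S$ contains an infinite $4$-generated member and is not locally finite. Since only countably many sets $S$ are finite, your family witnesses ``continuum many congruence classes of varieties of modular lattices,'' but not the stated corollary, whose entire point is the qualifier \emph{locally finite}. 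Note that passing to products does not repair this, because $\Var\bigl(\prod_{p\in S}L_p\bigr)=\Var\setm{L_p}{p\in S}$: any variety containing planes of infinitely many distinct prime characteristics contains $\Sub(\mathbb{Q}^3)$ and so fails local finiteness. Consequently, the locally finite family alluded to in the paper's (admittedly terse) proof cannot simply be $\Var$ of sets of planes; producing a family that is simultaneously locally finite, continuum-sized, closed under the duality argument, and separated by characteristic is precisely the step your proposal leaves unjustified, and it is the one place where your black-box appeal to \cite{B} does not cover what is claimed.
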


By using \cite[Theorem 3.11]{G2}, which gives a lower bound for some critical points, we obtain the following result.

\begin{corollary}
Let~$\cV$ be a locally finite variety of modular lattices in which every simple lattice has length at most~$n$ and let $F$ be a field. If $\cV\not\subseteq\Var(\Sub F^n)$, then $\crit{\cV}{\Var(\Sub F^n)}=\aleph_2$.
\end{corollary}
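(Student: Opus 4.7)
The plan is to prove the equality by establishing the two inequalities $\crit{\cV}{\Var(\Sub F^n)}\le\aleph_2$ and $\crit{\cV}{\Var(\Sub F^n)}\ge\aleph_2$ separately. The upper bound is essentially an application of Theorem~\ref{T:majorcritpoint}, and the lower bound is the referenced input \cite[Theorem~3.11]{G2}, which supplies the matching $\aleph_2$ from below.

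For the upper bound, set $\cW=\Var(\Sub F^n)$. I would verify the two hypotheses of Theorem~\ref{T:majorcritpoint}. First, $\Sub F^n$ is a modular lattice of length~$n$, and lattice varieties are congruence-distributive, so J\'onsson's Lemma forces every subdirectly irreducible member of~$\cW$ to lie in $HS(\Sub F^n)$; in particular every simple member of~$\cW$ has length at most~$n$, hence is of finite length, and any nontrivial lattice of finite length contains a covering pair. Thus every simple lattice in~$\cW$ contains a prime interval. Second, $\Sub F^n$ is self-dual, for example via the orthogonal-complement involution $U\mapsto U^\perp$ determined by any nondegenerate bilinear form on~$F^n$ (or, intrinsically, via the annihilator map into the dual space). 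Hence $\dual{\Sub F^n}\cong\Sub F^n$ and so $\dual{\cW}=\cW$. Consequently, the single assumption $\cV\not\subseteq\cW$ yields both $\cV\not\subseteq\cW$ and $\cV\not\subseteq\dual{\cW}$, and Theorem~\ref{T:majorcritpoint} gives $\crit{\cV}{\cW}\le\aleph_2$.

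For the lower bound I would simply invoke \cite[Theorem~3.11]{G2}: under the present hypotheses (locally finite modular variety~$\cV$ with simple members of length at most~$n$, a field~$F$, and $\cV\not\subseteq\Var(\Sub F^n)$), it produces an object of $\Conc\cV$ of cardinality $\aleph_2$ that lies outside $\Conc\Var(\Sub F^n)$, giving $\crit{\cV}{\Var(\Sub F^n)}\ge\aleph_2$. Combining with the upper bound yields the asserted equality. The only genuine subtlety is that Theorem~\ref{T:majorcritpoint} is stated for varieties of \emph{bounded} lattices, while~$\cV$ here is a variety of lattices; this is absorbed by replacing~$\cV$ by its bounded counterpart, which preserves local finiteness, modularity, the length bound on simple members, and the compact congruence class, and thus does not affect the critical point. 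Once that is noted, the argument is a direct combination of the two external ingredients.
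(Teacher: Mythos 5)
Your decomposition is the one the paper intends: the paper gives no explicit proof, but the sentence preceding the corollary makes clear that \cite[Theorem~3.11]{G2} is to supply the bound from below, while the bound from above comes from Theorem~\ref{T:majorcritpoint} exactly as you argue --- simple members of $\Var(\Sub F^n)$ have length at most~$n$, hence contain prime intervals, and $\Sub F^n$ is self-dual, so $\dual{\Var(\Sub F^n)}=\Var(\Sub F^n)$ and the single hypothesis $\cV\not\subseteq\Var(\Sub F^n)$ rules out both alternatives (2) and (3) of that theorem. Your handling of the bounded/unbounded mismatch is workable, though the phrase ``preserves the compact congruence class'' is too strong: $\Conc\mathbb{Z}$ has no largest element, hence is not $\Conc$ of any bounded lattice, so the compact congruence class of the bounded counterpart $\cV^{01}$ (the variety of bounded lattices whose lattice reducts lie in~$\cV$) is in general a proper subclass of $\Conc\cV$. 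What you actually need, and what is true, is only the inclusion $\Conc\cV^{01}\subseteq\Conc\cV$ together with the transfer of non-containment --- any failure of an identity is witnessed in a finitely generated, hence bounded, sublattice --- which yields $\crit{\cV}{\cW}\le\crit{\cV^{01}}{\cW}\le\aleph_2$. (Alternatively, the corollaries following Theorem~\ref{T:majorcritpoint} are stated for varieties of lattices and let you skip this reduction: $\Conc\cV\subseteq\Conc\cW$ would force $\cV\subseteq\cW$ or $\cV\subseteq\dual{\cW}$.)

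Two of your justifications, however, are incorrect as written and need repair. First, and most importantly, your gloss on the lower bound is logically inverted: producing ``an object of $\Conc\cV$ of cardinality $\aleph_2$ that lies outside $\Conc\Var(\Sub F^n)$'' is precisely the content of the \emph{upper} bound $\crit{\cV}{\Var(\Sub F^n)}\le\aleph_2$, which you already have from Theorem~\ref{T:majorcritpoint}. By the definition of the critical point, the inequality $\crit{\cV}{\Var(\Sub F^n)}\ge\aleph_2$ means that \emph{every} member of $\Conc\cV$ of cardinality at most $\aleph_1$ \emph{does} belong to $\Conc\Var(\Sub F^n)$, i.e., that no witness of size less than $\aleph_2$ exists; that, and not the existence of a large witness, is what \cite[Theorem~3.11]{G2} provides, and with that reading the asserted equality follows. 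Second, J\'onsson's Lemma places the subdirectly irreducible members of $\Var(\Sub F^n)$ in $HSP_{\mathrm{U}}(\Sub F^n)$, not in $HS(\Sub F^n)$; when $F$ is infinite the ultraproducts cannot be dropped. The conclusion you want survives because ``length at most~$n$'' is a first-order property (no chain of $n+2$ elements exists), hence passes to ultraproducts, and it is preserved by sublattices and by homomorphic images; so every simple member of $\Var(\Sub F^n)$ does have length at most~$n$ and therefore contains a prime interval.
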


There is an algorithm that, given finite lattices $K$, $L$ decides whether $\Var K\subseteq \Var L$. Therefore we can partially solve \cite[Problem~4]{GiWe1} for lattices and critical point~$\aleph_2$.

\begin{corollary}
There is an algorithm that, given finite lattices $K$, $L$, decides whether $\crit{\Var K}{\Var L}\le\aleph_2$ or  $\crit{\Var K}{\Var L}=\infty$.
\end{corollary}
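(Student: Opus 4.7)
The plan is to combine the dichotomy corollary stated immediately above with the algorithm (cited just before the statement) that decides $\Var K\subseteq \Var L$ for finite lattices. First I would observe that since $L$ is finite, the variety $\cW=\Var L$ is finitely generated, so by J\'onsson's Lemma its subdirectly irreducible members are finite; in particular every simple lattice in $\cW$ is finite and therefore contains a prime interval. Hence the hypothesis of the preceding dichotomy corollary is satisfied with $\cV=\Var K$ and $\cW=\Var L$.

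Next I would invoke the chain of corollaries: on the one hand, the preceding corollary gives the alternative
\[
 \crit{\Var K}{\Var L}\leq\aleph_2 \quad\text{or}\quad \Conc(\Var K)\subseteq\Conc(\Var L);
\]
on the other hand, the other corollary in this section (the one deducing $\cV\subseteq\cW$ or $\cV\subseteq\dual{\cW}$ from $\Conc\cV\subseteq\Conc\cW$) refines the second branch: $\Conc(\Var K)\subseteq\Conc(\Var L)$ forces $\Var K\subseteq\Var L$ or $\Var K\subseteq\dual{\Var L}$. Conversely, either of these inclusions immediately yields $\Conc(\Var K)\subseteq\Conc(\Var L)$, hence $\crit{\Var K}{\Var L}=\infty$. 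Noting that $\dual{\Var L}=\Var\dual{L}$ and that $\dual{L}$ is again a finite lattice, both inclusions $\Var K\subseteq \Var L$ and $\Var K\subseteq \Var\dual{L}$ are instances of the decidable problem stated right before the corollary.

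The algorithm is then: given finite lattices $K$ and $L$, compute $\dual{L}$, and run the known decision procedure twice, to test whether $\Var K\subseteq \Var L$ and whether $\Var K\subseteq \Var\dual{L}$. If at least one test returns \emph{yes}, output $\crit{\Var K}{\Var L}=\infty$; if both return \emph{no}, output $\crit{\Var K}{\Var L}\leq\aleph_2$. Correctness is exactly the dichotomy above, and termination is inherited from the cited algorithm. There is no substantial obstacle here beyond assembling these three ingredients; the only point requiring a brief justification is the equality $\dual{\Var L}=\Var\dual{L}$, which is immediate because an identity is satisfied in a lattice iff its dual is satisfied in the dual lattice.
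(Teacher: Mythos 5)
Your proposal is correct and matches the paper's intended argument: the paper states the decidability of $\Var K\subseteq\Var L$ immediately before the corollary and leaves the rest implicit, namely combining the dichotomy corollaries of Theorem~\ref{T:majorcritpoint} (applicable since $\Var L$, being finitely generated, has only finite simple members, hence ones with prime intervals) with two runs of that decision procedure, on $L$ and on $\dual{L}$. Your assembly of these ingredients, including the observations that $\dual{\Var L}=\Var\dual{L}$ and that either inclusion forces $\Conc(\Var K)\subseteq\Conc(\Var L)$ and hence $\crit{\Var K}{\Var L}=\infty$, is exactly the reasoning the paper relies on.
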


\section{Functorial results}\label{S:functorresult}

In this section we study the existence of a functor $\Psi\colon\cV\to\cW$ between varieties of lattices such that $\Conc\circ\Psi\cong\Conc$. We prove that such  a functor exists if and only if either $\cV\subseteq\cW$ or $\cV\subseteq\dual{\cW}$. However, the functor itself does not need to be equivalent to either inclusion or dualization. The paper~\cite{GrWe} provides many examples of such functors, using the \emph{lattice tensor product}~$\boxtimes$. Let~$S$ be a simple bounded lattice, denote by $\cL$ the variety of all lattices. Then:
\begin{align*}
\Psi\colon\cL & \to\cL\\
A &\mapsto S\boxtimes A, & &\text{for each lattice~$A\in\cL$,}\\
f &\mapsto S\boxtimes f, & &\text{for each morphism of lattices $f$,}
\end{align*}
is a functor and $\Conc\circ\Psi$ is naturally equivalent to $\Conc$.

The following lemma grants the existence of congruence chains (cf. Definition~\ref{D:congruencechain} and Remark~\ref{R:congruencechain}).

\begin{lemma}\label{L:CC2whenFunctor}
Let~$A$ and~$B$ be lattices, let $f\colon A\to B$ be a morphism of lattices, and let $\pi_0,\pi_1\colon B\to A$ be morphisms of lattices. Assume that $\pi_0\circ f=\pi_1\circ f=\id_A$ and $\Conc B\cong \two^2$ with coatoms $\ker \pi_0$ and $\ker \pi_1$. There are $u<v$ in~$A$ and a congruence chain of~$B$ with extremities $f(u)$ and $f(v)$.
\end{lemma}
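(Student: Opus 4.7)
The strategy is to embed $B$ into $A \times A$ via the map $\phi := (\pi_0, \pi_1)$, produce a single well-chosen element $b \in B$ whose two projections determine the required $u$ and $v$, and then take $x_1$ to be a mild modification of $b$ sitting in the interval $[f(u), f(v)]$. The starting observation is that since $\Conc B \cong \two^2$, the two coatoms $\alpha_0 := \ker\pi_0$ and $\alpha_1 := \ker\pi_1$ coincide with the two atoms of $\Conc B$. Distinct atoms meet at the bottom, so $\alpha_0 \cap \alpha_1 = \zero_B$ and hence $\phi$ is an injective homomorphism of lattices; moreover $\phi \circ f$ is the diagonal embedding $a \mapsto (a,a)$.

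Because $\alpha_0 \ne \alpha_1$, the maps $\pi_0$ and $\pi_1$ differ on some element, so I can fix $b \in B$ with $a_0 := \pi_0(b) \ne \pi_1(b) =: a_1$. Swapping the labels of $\pi_0$ and $\pi_1$ if necessary---which preserves all hypotheses---I may assume $a_1 \not\le a_0$, so that $v := a_0 \vee a_1$ strictly exceeds $u := a_0$ in $A$. Then set $x_1 := b \vee f(u)$; a direct computation gives $\pi_0(x_1) = a_0 \vee u = u$ and $\pi_1(x_1) = a_1 \vee a_0 = v$. Using $\phi(b) = (u, a_1) \le (v, v) = \phi(f(v))$ together with the fact that the injective homomorphism $\phi$ reflects order, one obtains $b \le f(v)$, hence $f(u) \le x_1 \le f(v)$; comparing $\pi_0$- and $\pi_1$-coordinates shows that $x_1$ differs from both $f(u)$ and $f(v)$, so one has a strict chain of length two in $B$.

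To finish, $(f(u), x_1) \in \alpha_0$ because $\pi_0(f(u)) = u = \pi_0(x_1)$, so $\Theta_B(f(u), x_1) \le \alpha_0$; being nonzero and bounded by an atom of $\Conc B$, it equals $\alpha_0$. Symmetrically $\Theta_B(x_1, f(v)) = \alpha_1$. The bijection $\sigma \colon \set{0,1} \to \At \Conc B$ with $\sigma(0) = \alpha_0$, $\sigma(1) = \alpha_1$ then exhibits $f(u) < x_1 < f(v)$ as a congruence chain of $B$ with extremities $f(u)$ and $f(v)$. The only delicate step is the normalization $a_1 \not\le a_0$: without it, the join $b \vee f(u)$ degenerates (if $a_1 \le a_0$ then $v = u$, leaving no room strictly between $f(u)$ and $f(v)$), which is precisely why the symmetric reduction via swapping $\pi_0$ and $\pi_1$ is invoked at the outset.
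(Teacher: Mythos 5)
Your proof is correct, but it follows a genuinely different route from the paper's. The paper starts from an \emph{arbitrary} pair $u<v$ in $A$, uses the fact that $A\cong B/\ker\pi_0$ is simple to deduce $\Theta_B(f(u),f(v))=\one_B=\beta_0\vee\beta_1$ (where $\beta_k$ is the complement of $\ker\pi_k$), and then invokes the standard decomposition of a congruence join in a lattice to produce a finite chain $f(u)=x_0<x_1<\dots<x_n=f(v)$ with each $\Theta_B(x_i,x_{i+1})\in\set{\beta_0,\beta_1}$; it then keeps only the first link, sets $v'=\pi_0(x_1)$ and $t_1=x_1\wedge f(v')$, and checks that $f(u)<t_1<f(v')$ is the desired congruence chain (note that the upper extremity is adjusted to $v'$). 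You instead exploit the subdirect embedding $\phi=(\pi_0,\pi_1)\colon B\into A\times A$ (injective because the two kernels are distinct atoms of $\Conc B\cong\two^2$, hence meet at $\zero_B$), pick a single element $b$ where the two projections differ, normalize by swapping, and manufacture the middle element as $x_1=b\vee f(u)$. This avoids both the simplicity of $A$ and the chain decomposition of congruence joins, so your argument is arguably more elementary and self-contained; the only facts you need are that injective lattice homomorphisms reflect order and that a nonzero congruence below an atom equals that atom, and both of those steps are carried out correctly. (A tiny further simplification: defining $x_1=(b\vee f(u))\wedge f(v)$ gives $\pi_0(x_1)=u$ and $\pi_1(x_1)=v$ directly and dispenses with the order-reflection argument for $b\le f(v)$.) The paper's route, on the other hand, is the one that generalizes when one wants to start from a prescribed lower extremity $u$, since your $u=\pi_0(b)$ is dictated by the choice of $b$.
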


\begin{proof}
Let $u<v$ in~$A$. Set $\alpha_k=\ker\pi_k$, note that $\pi_k(f(u))=u<v=\pi_k(f(v))$, so $(f(u),f(v))\not\in\alpha_k$ for each $k<2$. It follows easily that $\Theta_B(f(u),f(v))=\one_B=\alpha_0\vee\alpha_1$.

There are an integer $n>0$ and a chain $f(u)=x_0<x_1<\dots x_n=f(v)$ of~$B$ such that $\Theta_B(x_i,x_{i+1})\in\set{\alpha_0,\alpha_1}$ for all $i<n$. Up to permuting~$\pi_0$ and~$\pi_1$, we can assume that $\Theta_B(x_0,x_1)=\alpha_1$, that is, $\pi_0(x_1)>\pi_0(x_0)=u$ and $\pi_1(x_1)=\pi_1(x_0)=u$.

Put $v'=\pi_0(x_1)$, put $t=x_1\wedge f(v')$. Hence $\pi_0(t)=\pi_0(x_1)\wedge\pi_0(f(v'))=v'\wedge v'=v'$ and $\pi_1(t)=\pi_1(x_1)\wedge\pi_1(f(v'))=u\wedge v'=u$, thus the following equalities hold:
\begin{align*}
u=\pi_0(f(u))=\pi_1(f(u))=\pi_1(t),\\
v'=\pi_0(f(v'))=\pi_1(f(v'))=\pi_0(t).
\end{align*}
Therefore $\Theta(f(u),t)=\alpha_1$ and $\Theta(t,f(v'))=\alpha_0$, that is $f(u)<t<f(v')$ is a congruence chain of~$B$.
\end{proof}

\begin{theorem}\label{T:nofunctor}
Let~$\cK$ be a class of bounded lattices containing every chain of length either~$1$, $2$, or~$3$, let~$\cW$ be a variety of lattices, and let $\Psi\colon\cK\to\cW$ be a functor such that $\Conc\circ\Psi\cong\Conc$. Then, up to changing~$\cW$ to its dual and composing~$\Psi$ with dualization, $\cW$ contains~$\cK$, and if $\Phi\colon\cK\to\cW$ denotes the inclusion functor, there exists a natural transformation $(\varepsilon_L)_{L\in\cK}\colon\Phi\to\Psi$ such that $\varepsilon_L$ is a congruence-preserving embedding from~$\Phi(L)$ into~$\Psi(L)$, for each $L\in\cK$.
\end{theorem}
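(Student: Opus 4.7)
The plan is to transport the chain-diagram machinery of Section~\ref{S:ChainDiag} through the functor $\Psi$, using the given natural equivalence $\eta\colon\Conc\circ\Psi\to\Conc$ to identify congruences of $\Psi(L)$ with those of $L$. Since $\Conc\Psi(\two)\cong\Conc\two\cong\two$, the lattice $\Psi(\two)$ is simple and nontrivial, so I fix $u<v$ in $\Psi(\two)$ once and for all. Applying Lemma~\ref{L:CC2whenFunctor} to $f=\Psi(e)$ (for the bounds embedding $e\colon\two\to C_2$ into $C_2=\set{0,x,1}$) and $\pi_k=\Psi(\pi_k^{C_2})$ (for the two projections $C_2\to\two$) produces a congruence chain $\Psi(e)(u)<t<\Psi(e)(v)$ in $\Psi(C_2)$, which via $\eta_{C_2}$ is either direct or dually direct with respect to $C_2$; after possibly replacing $\cW$ by $\dual{\cW}$ and composing $\Psi$ with dualization, I may assume it is direct.

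For each $L\in\cK$ and each length-2 spanning chain $C=\set{0,y,1}$ in $L$, the canonical bounds-preserving isomorphism $C_2\to C$ sending $x\mapsto y$, combined with naturality of $\eta$, transports the direct middle in $\Psi(C_2)$ to a direct middle $t_C\in\Psi(C)$ with extremities $\Psi(e_C)(u)$ and $\Psi(e_C)(v)$. For a length-3 spanning chain $C'=\set{0,x_1,x_2,1}$ in $L$, consider the two length-2 spanning sub-chains $C^{(i)}=\set{0,x_i,1}$ with inclusions $\iota_i\colon C^{(i)}\to C'$, and set $t_i'=\Psi(\iota_i)(t_{C^{(i)}})\in\Psi(C')$. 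Applying $\Psi$ to the chain diagram $\vec A$ of $C'$ (viewed as its own partial sublattice) produces a partial lifting of $\Conc\circ\vec A$ in $\cW$, and Lemma~\ref{L:mainchaindia}(1),(5) applied inside this partial lifting show that $t_1'<t_2'$ and that $\Theta_{\Psi(C')}(t_1',t_2')$ corresponds under $\eta_{C'}$ to $\Theta_{C'}(x_1,x_2)$; hence $\Psi(e_{C'})(u)<t_1'<t_2'<\Psi(e_{C'})(v)$ is a direct length-3 congruence chain in $\Psi(C')$.

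For an arbitrary $L\in\cK$, the diagram $\Psi\circ\vec A$ obtained by applying $\Psi$ to the chain diagram $\vec A$ of $L$ (as its own partial sublattice) is a partial lifting of $\Conc\circ\vec A$ in $\cW$; by the previous paragraph each node $\bB_{\set{C}}=\Psi(C)$ for $C\in\cC_L$ carries a direct congruence chain with the canonical extremities $\Psi(e_C)(u)$ and $\Psi(e_C)(v)$. Theorem~\ref{T:good-liftingdia-imply-lifting} then supplies a gamp embedding $(\varepsilon_L,\eta_L)\colon(L,L,\Theta_L,\Conc L)\to\GA(\Psi(L))$, so $\varepsilon_L\colon L\to\Psi(L)$ is a lattice embedding, and the identity $\eta_L\circ\Conc\varepsilon_L=\id_{\Conc L}$ built into the gamp embedding shows that $\Conc\varepsilon_L$ is an isomorphism, i.e., $\varepsilon_L$ is congruence-preserving. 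Since $\cW$ is closed under sublattices, $L\in\cW$, giving $\cK\subseteq\cW$ and making the inclusion functor $\Phi\colon\cK\to\cW$ well defined.

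Naturality of $\varepsilon$ is then verified from the explicit description $\varepsilon_L(x)=\Psi(\iota_{C_x})(t_{C_x})$ for $x\in L\setminus\set{0,1}$ together with $\varepsilon_L(0)=\Psi(e_L)(u)$ and $\varepsilon_L(1)=\Psi(e_L)(v)$: a morphism $f\colon L\to L'$ factors $f\circ\iota_{C_x}$ either through the bounds embedding $e_{L'}$ when $f(x)\in\set{0,1}$ (where directness forces $\Psi(\pi_k^{C_x})(t_{C_x})$ to equal the appropriate element of $\set{u,v}$) or through the bounds-preserving isomorphism of length-2 chains sending $x$ to $f(x)$ when $f(x)\notin\set{0,1}$, and the equality $\Psi(f)(\varepsilon_L(x))=\varepsilon_{L'}(f(x))$ follows in each case by uniqueness of the direct middle. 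The main obstacle lies in this very uniqueness: the direct middle $t_C$ is characterized by $\Theta_{\Psi(C)}(\Psi(e_C)(u),t_C)=\eta_C^{-1}(\Theta_C(0,x))$, and for this to single it out I need the joint map $(\Psi(\pi_a^C),\Psi(\pi_b^C))\colon\Psi(C)\to\Psi(\two)\times\Psi(\two)$ to be injective, which holds because $\ker\Psi(\pi_a^C)\cap\ker\Psi(\pi_b^C)=\zero$ in the Boolean lattice $\Conc\Psi(C)$.
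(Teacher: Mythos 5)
Your proposal is correct, and its overall skeleton coincides with the paper's: apply Lemma~\ref{L:CC2whenFunctor} to a three-element chain with its two retractions onto~$\two$, dualize once so that the resulting congruence chain is direct, transport the direct middle to every chain of length~$2$ by functoriality, manufacture direct congruence chains on chains of length~$3$, feed $\GA\circ\Psi\circ\vec A$ (for $\vec A$ the chain diagram of~$L$) into Theorem~\ref{T:good-liftingdia-imply-lifting}, and finish with a naturality check. You diverge in two places, both defensibly. First, for a chain $D=\set{0,x_1,x_2,1}$ of length~$3$ the paper argues by hand: it introduces the two embeddings $C\to D$ and the three retractions $D\to\two$, and deduces the ordering and directness of $z_0^D<z_1^D<z_2^D<z_3^D$ from kernel computations, the key point being $\bigcap_k\ker g_{D,\two}^k=\zero_{\Psi(D)}$; you instead apply Lemma~\ref{L:mainchaindia}(1),(2),(5) to the partial lifting $\GA\circ\Psi\circ(\text{chain diagram of }D)$, which is slicker and reuses the Section~\ref{S:ChainDiag} machinery---note only that Lemma~\ref{L:mainchaindia} is stated for liftings whose transition maps are inclusions, so you must invoke the same harmless reduction (``since the $f_{P,Q}$ are embeddings and the index poset has a top, we may assume the $g_{P,Q}$ are inclusions'') that opens the proof of Theorem~\ref{T:good-liftingdia-imply-lifting}. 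Second, your naturality verification splits into the cases $f(x)\in\set{0,1}$ and $f(x)\notin\set{0,1}$, resolving the former by uniqueness of the direct middle, which you correctly pin down via injectivity of the pair of projections (their kernels meet to~$\zero$ in $\Conc\Psi(C)$); the paper's proof only treats the factorization through the isomorphism $C_x\to C_{p(x)}$, i.e., it implicitly assumes $p(x)\notin\set{0,1}$, so your argument actually covers a case the paper passes over silently. One cosmetic slip: you ``fix $u<v$ in $\Psi(\two)$ once and for all'' before invoking Lemma~\ref{L:CC2whenFunctor}, but that lemma produces its own pair of extremities (its proof replaces~$v$ by $\pi_0(x_1)$); simply take $u,v$ to be the pair the lemma outputs, as the paper does.
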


\begin{proof}
Fix a natural equivalence $\vec \xi=(\xi_L)_{L\in\cK}\colon\Conc\circ\Psi\to\Conc$.

Let $\two=\set{0,1}$ be a chain. In this proof, each time we define a morphism $f_{P,Q}$ or $f_{P,Q}^k$ of lattices, we also denote $g_{P,Q}=\Psi(f_{P,Q})$ or $g_{P,Q}^k=\Psi(f_{P,Q}^k)$. Given any finite chain~$C$, we denote by $f_{\two,C}\colon\two\to C$ the only $0,1$-homomorphism of lattices.

Let~$C=\set{0,x_1,1}$ be a chain with $0<x_1<1$. Consider the following $0,1$-homomorphisms of lattices
\begin{align*}
f_{C,\two}^0\colon C&\to\two & f_{C,\two}^1\colon C&\to\two\\
x_1&\mapsto 0 & x_1&\mapsto 1.
\end{align*}
Notice that $\Conc\Psi(C)\cong\Conc C\cong \two^2$, $\set{\ker g_{C,\two}^0,\ker g_{C,\two}^1}=\At\Conc \Psi(C)$, and $f_{C,\two}^0\circ f_{\two,C}= f_{C,\two}^1\circ f_{\two,C}=\id_\two$. It follows from Lemma~\ref{L:CC2whenFunctor} that there are $u<v$ in $\Psi(\two)$ and a congruence chain $g_{\two,C}(u)=z_0^C<z_1^C<z_2^C=g_{\two,C}(v)$ of $\Psi(C)$. Up to changing~$\cW$ to its dual and dualizing $\Psi$, we can assume that $z_0^C<z_1^C<z_2^C$ is a direct congruence chain for $(\xi_C,C)$, that is,
\begin{equation}\label{eq:functor:dcc}
\xi_C(\Theta_{\Psi(C)}(z_0^C,z_1^C))=\Theta_C(0,x_1)\text{ and }\xi_C(\Theta_{\Psi(C)}(z_1^C,z_2^C))=\Theta_C(x_1,1).
\end{equation}
As $\ker f_{C,\two}^0=\Theta_C(0,x_1)$, $g_{C,\two}^0(z_1^C)=g_{C,\two}^0(z_0^C)$, similarly $g_{C,\two}^1(z_1^C)=g_{C,\two}^1(z_2^C)$. Therefore the following equalities hold:
\begin{align}
g_{C,\two}^0(z_1^C)=g_{C,\two}^0(z_0^C)=u=g_{C,\two}^1(z_0^C),\label{eq:functor:1}\\
g_{C,\two}^1(z_1^C)=g_{C,\two}^1(z_2^C)=v=g_{C,\two}^0(z_2^C).\label{eq:functor:2}
\end{align}
Given chains $D$ and $D'$ of the same length, we denote by $f_{D,D'}\colon D\to D'$ the only isomorphism.

Let $D$ be a chain of length~$2$, we denote $z_k^D=g_{C,D}(z_k^C)$ for all $k<3$. The following equalities hold:
\[
z_0^D=g_{C,D}(z_0^C)=g_{C,D}(g_{\two,C}(u))=g_{\two,D}(u).
\]
With a similar argument we obtain $z_2^D=g_{\two,D}(v)$. Moreover $g_{\two,D}(u)=z_0^D<z_1^D<z_2^D=g_{\two,D}(v)$ is a direct congruence chain of $\Psi(D)$.

Let $D=\set{0,y_1,y_2,1}$ be a chain with $0<y_1<y_2<1$. Consider the following $0,1$-homomorphisms of lattices
\begin{align*}
f_{C,D}^1\colon C&\to D & f_{C,D}^2\colon C&\to D\\
x_1&\mapsto y_1 & x_1&\mapsto y_2\\
f_{D,\two}^1\colon D&\to \two & f_{D,\two}^2\colon D&\to \two & f_{D,\two}^3\colon D&\to \two\\
y_1 &\mapsto 1 & y_1&\mapsto 0 & y_1&\mapsto 0\\
y_2 &\mapsto 1 & y_2&\mapsto 1 & y_2&\mapsto 0
\end{align*}
With a simple computation we obtain
\begin{align}
f_{C,\two}^0=f_{D,\two}^2\circ f_{C,D}^1=f_{D,\two}^3\circ f_{C,D}^1=f_{D,\two}^3\circ f_{C,D}^2\label{eq:functor:f1}\\
f_{C,\two}^1=f_{D,\two}^1\circ f_{C,D}^1=f_{D,\two}^1\circ f_{C,D}^2=f_{D,\two}^2\circ f_{C,D}^2\label{eq:functor:f2}
\end{align}

Put $z_0^D=g_{\two,D}(u)$, $z_1^D=g_{C,D}^1(z_1^C)$, $z_2^D=g_{C,D}^2(z_1^C)$, and $z_3^D=g_{\two,D}(v)$. As $f_{D,\two}^k\circ f_{\two,D}=\id_\two$, the following equalities hold:
\begin{align}\label{eq:functorgd2k1}
g_{D,\two}^k(z_0^D)=g_{D,\two}^k(g_{\two,D}(u))=u,\quad\text{for all $k\in\set{1,2,3}$}.
\end{align}
Similarly we obtain
\begin{align}\label{eq:functorgd2k2}
g_{D,\two}^k(z_3^D)=g_{D,\two}^k(g_{\two,D}(v))=v,\quad\text{for all $k\in\set{1,2,3}$}.
\end{align}

The following equalities hold:
\begin{align*}
g_{D,\two}^1(z_1^D)&=g_{D,\two}^1\circ g_{C,D}^1(z_1^C) &&\text{as $z_1^D=g_{C,D}^1(z_1^C)$.}\\
&=g_{C,\two}^1(z_1^C) &&\text{by~\eqref{eq:functor:f2}.}\\
&=v && \text{by~\eqref{eq:functor:2}.}
\end{align*}

With similar arguments~\eqref{eq:functor:1},~\eqref{eq:functor:2},~\eqref{eq:functor:f1},~\eqref{eq:functor:f2},~\eqref{eq:functorgd2k1}, and~\eqref{eq:functorgd2k2} implies the following equalities
\begin{align}
u=g_{D,\two}^1(z_0^D)=g_{D,\two}^2(z_0^D)=g_{D,\two}^3(z_0^D)= g_{D,\two}^2(z_1^D) = g_{D,\two}^3(z_1^D) = g_{D,\two}^3(z_2^D).\label{eq:functoru}\\
v=g_{D,\two}^1(z_3^D)=g_{D,\two}^2(z_3^D)=g_{D,\two}^3(z_3^D)= g_{D,\two}^1(z_1^D) = g_{D,\two}^1(z_2^D) = g_{D,\two}^2(z_2^D).\label{eq:functorv}
\end{align}

Notice that $g_{D,\two}^k(z_0^D) \le  g_{D,\two}^k(z_1^D)\le g_{D,\two}^k(z_2^D)\le g_{D,\two}^k(z_3^D)$ for all $k\in\set{1,2,3}$ and $z_i^D\not=z_j^D$ for $0\le i<j\le 3$. As $\bigcap\famm{\ker g_{D,\two}^k}{k\in\set{1,2,3}}=\zero_{\Psi(D)}$, this implies $z_0^D< z_1^D< z_2^D< z_3^D$.

The following equalities hold:
\begin{align}
\xi_D(\ker g_{D,\two}^1)&=\ker f_{D,\two}^1=\Theta_D(y_1,1),\label{eq:functor:cong1}\\
\xi_D(\ker g_{D,\two}^2)&=\ker f_{D,\two}^2=\Theta_D(0,y_1),\vee\Theta_D(y_2,1),\label{eq:functor:cong2}\\
\xi_D(\ker g_{D,\two}^3)&=\ker f_{D,\two}^3=\Theta_D(0,y_2).\label{eq:functor:cong3}
\end{align}
Therefore,
\begin{align*}
\xi_D(\Theta_{\psi(D)}(z_0^D,z_1^D))&\subseteq \xi_D(\ker g_{D,\two}^2)\cap \xi_D(\ker g_{D,\two}^3) && \text{by~\eqref{eq:functoru}}\\
&=(\Theta_D(0,y_1)\vee\Theta_D(y_2,1))\cap \Theta_D(0,y_2) &&\text{by~\eqref{eq:functor:cong2},~\eqref{eq:functor:cong3}}\\
&=\Theta_D(0,y_1)
\end{align*}
Similarly~\eqref{eq:functoru},~\eqref{eq:functorv},~\eqref{eq:functor:cong1},~\eqref{eq:functor:cong2}, and~\eqref{eq:functor:cong3} implies that:
\[
\xi_D(\Theta_{\psi(D)}(z_1^D,z_2^D))=\Theta_D(y_1,y_2),
\]
\[
\xi_D(\Theta_{\psi(D)}(z_2^D,z_3^D))=\Theta_D(y_2,1).
\]
Therefore, $g_{\two,D}(u)=z_0^D<z_1^D<z_2^D<z_3^D=g_{\two,D}(v)$ is a direct congruence chain of $\Psi(D)$.

We have constructed for each chain $D$ of~$\cK$ of length either~$2$ or~$3$ a direct congruence chain of $\Psi(D)$ with extremities $g_{\two,D}(u)$ and $g_{\two,D}(v)$. Let $L\in\cK$, let~$\cC$ be the set of all chains of~$L$ of length either~$2$ or~$3$. Let $\vec A=\famm{A_P,h_{P,Q}^L}{P\le Q\in\IC(\cC)}$ be the~$\cC$-chain diagram of~$L$. Put~$C_x=\set{0,x,1}$ for each $x\in L-\set{0,1}$. As $\vec \bB=\GA\circ\Psi\circ\vec A$ is a partial lifting of $\Conc\circ\vec A$ it follows from Theorem~\ref{T:good-liftingdia-imply-lifting} that the map $\varepsilon_L\colon  L\to B_\top=\Psi(L)$, defined by $\varepsilon_L(0)= \Psi(h_{\emptyset,\top}^L)(u)$, $\varepsilon_L(1)=\Psi(h_{\emptyset,\top}^L)(v)$, and $\varepsilon_L(x)=\Psi(h_{C_x,\top}^L)(z_1^{C_x})$ for all $x\in L-\set{0,1}$, is an embedding of lattices. Moreover $(\varepsilon_L,\xi_L)\colon(L,L,\Theta_L,\Conc L)\to\bB_\top=\GA(\Psi(L))$ is an embedding. As $\xi_L$ is an isomorphism, it follows that $\Conc\varepsilon_L\colon\Conc L\to \Conc(\Psi(L))$ is an isomorphism.

Let $p\colon K\to L$ be a morphism of lattices in~$\cK$. Let $x\in K-\set{0,1}$. Notice that the restriction of $p\colon C_x\to C_{p(x)}$ is the only isomorphism, thus $p\circ h_{C_x,\top}^K=h_{C_{p(x)},\top}^L\circ f_{C_x,C_{p(x)}}$. Therefore the following equalities hold:
\begin{align*}
\Psi(p)(\varepsilon_K(x))&=\Psi(p\circ h_{C_x,\top}^K)(z_1^{C_x})\\
&=\Psi(h_{C_{p(x)},\top}^L\circ f_{C_x,C_{p(x)}}) (z_1^{C_x})\\
&=\Psi(h_{C_{p(x)},\top}^L)( g_{C_x,C_{p(x)}}(z_1^{C_x}) )\\
&=\Psi(h_{C_{p(x)},\top}^L)(z_1^{C_{p(x)}})\\
&=\varepsilon_L(p(x)).
\end{align*}
Similarly $\Psi(p)(\varepsilon_K(0))=\varepsilon_L(p(0))$ and $\Psi(p)(\varepsilon_K(1))=\varepsilon_L(p(1))$. So $\vec\varepsilon$ is a natural transformation.
\end{proof}

\begin{remark}
Let~$C=\set{0,x_1,1}$ be a chain of length~$3$, let $f\colon \two\to C$ be the only $0,1$-homomorphism of lattices. Given an element~$a$ in a bounded lattice~$L$, denote by $f_{a,L}\colon C\to L$ the only $0,1$-homomorphism such that $f_{a,L}(x_1)=a$.

It follows from the proof of Theorem~\ref{T:nofunctor} that given $u<v$ in $\Psi(\two)$ and a direct congruence chain $\Psi(f)(u)<z_1<\Psi(f)(v)$ of $\Psi(C)$, we can construct $\vec\varepsilon\colon \Phi\to\Psi$ satisfying the conclusion of the theorem, and such that $\varepsilon_L(a)=\Psi(f_{a,L})(z_1)$, for each $L\in\cK$ and each $a\in L$.
\end{remark}

\begin{corollary}
Let~$\cV$ and~$\cW$ be varieties of lattices, let $\Psi\colon\cV\to\cW$ be a functor such that $\Conc\circ\Psi\cong\Conc$. Then either $\cV\subseteq\cW$ or $\cV\subseteq\dual{\cW}$.
\end{corollary}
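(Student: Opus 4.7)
The plan is to apply Theorem~\ref{T:nofunctor} to the class of bounded lattices in~$\cV$ and then lift the resulting inclusion to all of~$\cV$ by a directed-colimit argument. First I would dispose of the trivial case: if~$\cV$ consists only of one-element lattices, then $\cV\subseteq\cW$ holds automatically. Otherwise $\cV$ contains the two-element chain, and hence, as sublattices of~$\two^2$ and~$\two^3$, every finite chain; in particular the chains of length~$1$, $2$, and~$3$ all lie in~$\cV$.

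Next I would let~$\cK$ be the class of all bounded lattices lying in~$\cV$, viewed as a subcategory of~$\cV$, and restrict~$\Psi$ to~$\cK$. Then~$\cK$ satisfies the hypotheses of Theorem~\ref{T:nofunctor}, and the restriction $\Psi\res\cK\colon\cK\to\cW$ is a functor with $\Conc\circ(\Psi\res\cK)\cong\Conc\res\cK$. Applying Theorem~\ref{T:nofunctor} yields, after possibly interchanging~$\cW$ with~$\dual{\cW}$ (and~$\Psi$ with its composite with dualization), the inclusion $\cK\subseteq\cW$.

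It remains to lift $\cK\subseteq\cW$ to $\cV\subseteq\cW$. For this I would use the observation that every finitely generated lattice is bounded---its top is the join of any finite generating set, and dually for the bottom---so every finitely generated sublattice of a lattice in~$\cV$ lies in~$\cK$, and hence in~$\cW$. Since every lattice is the directed union of its finitely generated sublattices, and varieties of algebras are closed under directed colimits (a directed colimit being a quotient of a subalgebra of a product), every $L\in\cV$ is a directed colimit of lattices in~$\cW$, and therefore belongs to~$\cW$.

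The only point that might be considered an obstacle is the \emph{uniformity} of the dichotomy across~$\cK$: Theorem~\ref{T:nofunctor} must supply a single dualization choice that works for the whole class~$\cK$ at once, rather than one that could vary from object to object. This uniformity is built into the statement of Theorem~\ref{T:nofunctor}, and once it is secured the extension from~$\cK$ to~$\cV$ is routine.
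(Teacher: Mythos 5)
Your proof is correct and is essentially the derivation the paper intends: the corollary is stated there without proof as an immediate consequence of Theorem~\ref{T:nofunctor}, and your argument---disposing of the trivial variety, restricting $\Psi$ to the class~$\cK$ of bounded lattices of~$\cV$ (which contains all chains of length $1$, $2$, $3$ once $\cV$ is nontrivial), applying the theorem to get $\cK\subseteq\cW$ up to dualization, and then recovering $\cV\subseteq\cW$ because every lattice is the directed union of its finitely generated, hence bounded, sublattices---is exactly the routine verification the paper leaves implicit. The uniformity of the dualization choice is indeed built into the statement of Theorem~\ref{T:nofunctor}, so your argument is complete as written.
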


\begin{corollary}\label{C:N5inM}
Let $\cN_5$ be the variety of lattices generated by the five-element non-modular lattice. Denote by $\cM$ the variety of all  modular lattices. Then there exists no functor $\Psi\colon\cN_5\to\cM$ such that $\Conc\circ\Psi\cong\Conc$.
\end{corollary}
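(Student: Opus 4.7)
The plan is to apply the preceding corollary directly. Suppose, for a contradiction, that a functor $\Psi\colon\cN_5\to\cM$ exists with $\Conc\circ\Psi\cong\Conc$. Both $\cN_5$ and $\cM$ are varieties of lattices, so the preceding corollary applies and yields either $\cN_5\subseteq\cM$ or $\cN_5\subseteq\dual{\cM}$.

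Next I would observe that the variety $\cM$ of modular lattices is self-dual: the modular identity $x\wedge(y\vee(x\wedge z))=(x\wedge y)\vee(x\wedge z)$ is equivalent to its dual, hence $\dual{\cM}=\cM$. Consequently both alternatives above collapse to $\cN_5\subseteq\cM$, which forces $N_5\in\cM$. Since $N_5$ is the standard example of a non-modular lattice, this is a contradiction, and no such functor $\Psi$ exists.

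The only subtlety to check before invoking the previous corollary is that its hypotheses (inherited from Theorem~\ref{T:nofunctor}) are met, namely that $\cN_5$ contains every chain of length $1$, $2$, or~$3$. This is immediate: $N_5$ itself contains a $3$-element chain and a $4$-element chain as sublattices (cf. Figure~\ref{F:treillis_M3_N5}), so all such chains belong to $\Var N_5=\cN_5$. No genuine obstacle arises here; the whole content of the corollary is the combination of the previous one with the self-duality of~$\cM$ and the non-modularity of~$N_5$.
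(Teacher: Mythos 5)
Your proposal is correct and is exactly the argument the paper intends: Corollary~\ref{C:N5inM} is stated as an immediate consequence of the preceding corollary (itself derived from Theorem~\ref{T:nofunctor}), combined with the self-duality of the modular law (so $\dual{\cM}=\cM$) and the non-modularity of $N_5$. Your extra check that $\cN_5$ contains all short chains is harmless but not needed, since the preceding corollary is stated for arbitrary varieties of lattices.
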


\section{An open problem}\label{S:Pb}

The most natural problem brought up by the present work is whether the assumption about simple members of~$\cW$ is necessary for getting the result of Theorem~\ref{T:majorcritpoint}. It is not even known whether $\Conc\cV\subseteq\Conc\cW$ implies that~$\cV$ is contained either in~$\cW$ or its dual, for any varieties~$\cV$ and~$\cW$ of lattices. For example, it is not known whether $\Conc\cN_5\subseteq\Conc\cM$ (recall that~$\cN_5$ is the variety generated by the five-element non-modular lattice while~$\cM$ is the variety of all modular lattices). Nevertheless, Corollary~\ref{C:N5inM} suggests that this would not occur for any ``obvious'' reason.

More generally, even in crossover contexts (i.e., $\cV$ and~$\cW$ may not share the same similarity type), it is not completely unplausible that the chain diagram (cf. Definition~\ref{D:chaindia}) could be tailored to further varieties of algebras. If this could be done, then it might be possible to prove that for varieties of many other structures than just lattices, the containment $\Conc\cV\subseteq\Conc\cW$ could always be expressed \emph{via} a suitable amount of interpretability of members of~$\cV$ in~$\cW$. Hence, at least up to a suitable notion of interpretability, $\Conc\cV$ would always determine~$\cV$. This would be a most satisfactory answer to Question $\Qtwo$.

A partial answer to $\Qone$, for so-called \emph{strongly congruence-proper varieties} of algebras, is given in~\cite{G4} (note that a finitely generated congruence-modular varieties is strongly congruence-proper). If $\cV$ is a strongly congruence-proper variety of algebras, then $\Conc\cV$ is determined by its members of cardinality $\le\aleph_2$. The critical point between strongly congruence-proper varieties of algebras is either $\le\aleph_2$ or $\infty$.

\section{Acknowledgment}
I thank Friedrich Wehrung, for his many constructive remarks and his careful reading of the paper.

\end{document}